\documentclass[10pt,reqno]{amsart}

\usepackage[utf8]{inputenc}
\usepackage[T1]{fontenc}
\usepackage{amsmath, amssymb, amsthm}
\usepackage{graphicx}
\usepackage[colorlinks=true,allcolors=blue]{hyperref}
\usepackage{geometry}
\usepackage{xcolor}
\usepackage{orcidlink}
\usepackage{setspace}
\usepackage{mathtools}

\DeclareMathOperator{\aut}{aut}
\DeclareMathOperator{\Aut}{Aut}
\DeclareMathOperator{\Hol}{Hol}
\DeclareMathOperator{\Iso}{Iso}

\DeclareMathOperator{\End}{End}
\DeclareMathOperator{\SO}{SO}
\DeclareMathOperator{\OO}{O}
\DeclareMathOperator{\Ad}{Ad}
\DeclareMathOperator{\ad}{ad}
\DeclareMathOperator{\Id}{Id}

\newcommand{\R}{\mathbb{R}}

\newcommand{\got}{\mathfrak}

\title{On the Structure of $2$-Step Nilpotent Lorentzian Naturally Reductive Lie Groups}

\author{Brian Luporini}
\address{CONICET and Departamento de Matemática, ECEN-FCEIA, Universidad Nacional de Rosario, Av.\ Pellegrini 250, S2000BTP Rosario, Argentina.}
\email{\href{mailto:luporini@fceia.unr.edu.ar}{luporini@fceia.unr.edu.ar}}
\urladdr{\href{https://orcid.org/0009-0003-5935-0582}{0009-0003-5935-0582}}

\author{Silvio Reggiani}
\address{CONICET and Departamento de Matemática, ECEN-FCEIA, Universidad Nacional de Rosario, Av.\ Pellegrini 250, S2000BTP Rosario, Argentina.}
\email{\href{mailto:reggiani@fceia.unr.edu.ar}{reggiani@fceia.unr.edu.ar}}
\urladdr{\href{https://orcid.org/0000-0002-8549-5828}{0000-0002-8549-5828}}

\author{Francisco Vittone}
\address{CONICET and Departamento de Matemática, ECEN-FCEIA, Universidad Nacional de Rosario, Av.\ Pellegrini 250, S2000BTP Rosario, Argentina.}
\email{\href{mailto:vittone@fceia.unr.edu.ar}{vittone@fceia.unr.edu.ar}}
\urladdr{\href{https://orcid.org/0000-0002-3883-273X}{0000-0002-3883-273X}}

\date{\today}
\subjclass[2020]{53C50, 22E25, 53C30}
\keywords{Lorentz manifold, symmetric space, index of symmetry, naturally reductive space, nilpotent Lie group}

\theoremstyle{plain}
\newtheorem{theorem}{Theorem}[section]
\newtheorem{lemma}[theorem]{Lemma}
\newtheorem{corollary}[theorem]{Corollary}
\newtheorem{proposition}[theorem]{Proposition}

\theoremstyle{definition}
\newtheorem{definition}[theorem]{Definition}
\newtheorem{example}[theorem]{Example}

\theoremstyle{remark}
\newtheorem{observation}[theorem]{Remark}

\numberwithin{equation}{section}

\begin{document}

\begin{abstract}
We study $2$-step nilpotent Lie groups with naturally reductive left-invariant Lorentzian metrics with respect to the presentation group $N \rtimes H^{\aut}$.  Replacing the standard non-degenerate center assumption with the weaker condition that the commutator ideal be non-degenerate, we develop a framework that extends the construction to the Lorentzian context and covers both the non-degenerate and degenerate center cases. In the degenerate case, we show that the associated Lie algebra is a central extension of a semidirect product whose Riemannian factor is naturally reductive. Furthermore, we obtain invariant decompositions of the defining representation, including a distinguished Lorentzian factor, and provide an explicit description of the isotropy algebra and the identity component of the isometric automorphism group. These results complete the structural description of naturally reductive $2$-step Lorentzian nilpotent Lie groups under the assumption of non-degeneracy in the commutator ideal.
\end{abstract}

\maketitle

\section{Introduction}\label{sec:1_Introduction}

The geometry of nilpotent Lie groups endowed with a left-invariant metric has been a recurring subject of investigation in homogeneous geometry, both in the Riemannian  and  the pseudo-Riemannian settings. Among these Lie groups, the  naturally reductive ones play an important role. In this paper, we address the structure of $2$-step nilpotent Lie groups with a Lorentzian left-invariant  metric that is naturally reductive with respect to a suitable presentation group.

The Riemannian case is by now classical. Wolf~\cite{Wolf1962} showed that if a connected nilpotent Lie group $N \subset \Iso(M)$ acts transitively on a Riemannian manifold $M$, then $N$ is unique, coincides with the nilradical of the isometry group, and the action is simply transitive. Consequently, $M$ can be identified with $N$ together with a left-invariant metric, the isotropy subgroup $H$ at the identity equals the group  $H^{\aut}$ of isometric automorphisms of $N$, and the full isometry group splits as the semidirect product $\Iso(M) = N \rtimes H^{\aut}$. In this framework, Kaplan introduced and studied the so called $H$-type Lie groups in~\cite{Kaplan1981}, while Wilson~\cite{Wilson1982} and Gordon~\cite{Gordon1985} carried out a systematic analysis of homogeneous nilmanifolds. In particular, Gordon proved that a naturally reductive Riemannian nilpotent Lie group with a left-invariant metric must be at most $2$-step nilpotent. Later, Lauret described naturally reductive nilmanifolds via representations of compact Lie algebras and derived a number of remarkable geometric consequences~\cite{Lauret1998, Lauret1999}.

The pseudo-Riemannian setting is considerably more delicate. Ovando~\cite{Ovando2013} extended Lauret's construction and provided a description of naturally reductive $2$-step nilpotent Lie groups with a left-invariant pseudo-Riemannian metric, under the assumption that the center $\got z$ of the Lie algebra is non-degenerate. Already in this generality the picture changes substantially with respect to the Riemannian case: not every naturally reductive nilmanifold is $2$-step nilpotent, and, as shown by del~Barco and Ovando~\cite{DelBarcoOvando2014}, the subgroup $N \rtimes H^{\aut}$ described by Wolf is in general strictly smaller than the full isometry group $\Iso(N)$. Further results on the structure of pseudo-Riemannian $2$-step nilpotent Lie groups and their isometry groups can be found in~\cite{CorderoParker2009, Guediri2003, AutenriedFurutaniMarkinaVasiľev2018, delBarcoOvandoVittone2014, ContiDelBarcoRossi2024}.

In the Lorentzian setting, a major breakthrough was obtained in the recent works of Chen, Wolf, Zhang and Nikolayevsky~\cite{ChenWolfZhang2020, NikolayevskyWolf2022}: if $N$ is a simply connected nilpotent Lie group endowed with a left-invariant Lorentzian metric which is naturally reductive with respect to the subgroup $N \rtimes H^{\aut}$ of $\Iso(N)$, and if the commutator $\got n' = [\got n, \got n]$ is non-degenerate, then $N$ is necessarily $2$-step nilpotent, just as in the Riemannian case. When $\got n'$ is degenerate, by contrast, the step of nilpotency can be arbitrarily large and no comparable structural description is available.

The aim of the present paper is to complete the structural picture of $2$-step nilpotent Lorentzian Lie groups that are naturally reductive with respect to $N \rtimes H^{\aut}$. Motivated by the results of Nikolayevsky and Wolf, we replace   the  hypothesis used in ~\cite{Ovando2013} (non-degeneracy of the center $\got z$) with the weaker and geometrically more natural assumption that the commutator $\got n'$ is non-degenerate. Under this assumption one can split off a flat factor of $N$, exactly as in the Riemannian case (see Section~\ref{sec:2_Geometry_of_2-step_nilpotent_Lorentzian_Lie_groups}), and the analysis reduces to two qualitatively different situations:
\begin{itemize}
    \item the case where the center $\got z$ is non-degenerate, which extends the picture in~\cite{Ovando2013}; and
    \item the case where $\got z$ is degenerate, which is genuinely Lorentzian in nature and has no Riemannian counterpart.
\end{itemize}
The latter situation requires new techniques and produces a structural decomposition that, to the best of our knowledge, has not been described before in the literature.

Our main contributions can be summarized as follows. First, we establish a Lorentzian analogue of Lauret's representation-theoretic construction of naturally reductive nilmanifolds, which includes both the non-degenerate and the degenerate behavior of the center. We show that any $2$-step nilpotent Lie algebra $\got n$ associated with a degenerate Lorentzian data set is a central extension of a semidirect product whose underlying factor is the Lie algebra of a Riemannian naturally reductive nilpotent Lie group, thereby reducing part of the Lorentzian classification to data already understood in the Riemannian setting. Second, when the center $\got z$ is non-degenerate we describe the decomposition of the representation $\pi: \got n' \to \End(\got v)$ into invariant subspaces, generalizing to the Lorentzian case the corresponding decompositions in~\cite{Lauret1999, Ovando2013}. Third, we give an explicit description of the isotropy algebra $\got h^{\aut}$ of $N \rtimes H^{\aut}$ and of its action on the Lie algebra $\got n$. The corresponding result in~\cite{Ovando2013} relies on the additional hypothesis that $\got g$ is semisimple; we show, by contrast, that this is never the case in the Lorentzian setting, so a different argument is required. 

The paper is organized as follows. In Section~\ref{sec:2_Geometry_of_2-step_nilpotent_Lorentzian_Lie_groups}, we collect the geometric preliminaries on $2$-step nilpotent Lorentzian Lie groups, derive the Lorentzian analogue of Eberlein's description of the Levi-Civita connection~\cite{Eberlein1994}, and discuss the splitting of a flat factor. Section~\ref{sec:3_Pseudo-Riemannian_naturally_reductive_spaces} reviews the general framework of pseudo-Riemannian naturally reductive spaces, introduces the presentation group $\Iso^{\aut}(N) = N \rtimes H^{\aut}$ and characterizes it as the normalizer of $N$ in $\Iso(N)$, and recalls the Geodesic Lemma of~\cite{ChenWolfZhang2020, NikolayevskyWolf2022}, which provides a convenient criterion for natural reductivity. In Section~\ref{sec:4_Naturally_reductive_Lorentzian_2-step_nilpotent_Lie_groups_via_representations}, we develop the representation-theoretic construction, introducing the notions of \emph{Lorentzian data set} and \emph{degenerate Lorentzian data set} according to whether the center of the resulting Lie algebra is non-degenerate or degenerate, and we prove that every $2$-step nilpotent Lorentzian Lie group satisfying our hypotheses arises from such a data set. Section~\ref{sec:5_Degenerate_data_set} is devoted to the degenerate case: there we show that the Lie algebra $\got n(\got g, \got v, \pi)$ associated with a degenerate Lorentzian data set is a central extension of a semidirect product with a Riemannian factor is itself naturally reductive (Theorem~\ref{teo:central_extension_semidirect}). In Section~\ref{sec:6_Action_of_the_representation_pi_on_v}, we analyze the action of $\pi$ on $\got v$. When the center $\got z$ is non-degenerate we obtain a decomposition of $\got v$ into $\pi(\got g)$-invariant subspaces analogous to Lauret's (Theorem~\ref{teo:imp1}); when $\got v$ is Lorentzian a genuinely new phenomenon appears, namely a distinguished $2$-dimensional Lorentzian factor spanned by two invariant lightlike vectors. Finally, in Section~\ref{sec:7_The_isotropy_algebra_h_aut}, we use these decompositions to describe the isotropy algebra $\got h^{\aut}$ and the identity component of $H^{\aut}$ (Theorem~\ref{teo:16} and Corollary~\ref{cor:H0}), thereby completing the structural analysis.

\section{Geometry of \texorpdfstring{$2$}{2}-step nilpotent Lorentzian Lie groups}\label{sec:2_Geometry_of_2-step_nilpotent_Lorentzian_Lie_groups}

In this section, we obtain some results on the geometry of simply connected, Lorentzian, $2$-step nilpotent Lie groups. As mentioned in the introduction, we assume that the commutator of the Lie algebra of such a group is non-degenerate, and we obtain a description of the Levi-Civita connection analogous to those presented in~\cite{Ovando2013} and~\cite{Eberlein1994}.

Let $N$ be a simply connected, $2$-step nilpotent Lie group endowed with a pseudo-Riemannian left-invariant metric $\langle \cdot,\cdot\rangle$. Let $\got n$ be its Lie algebra (i.e., the Lie algebra of left-invariant vector fields of $N$). We denote by $\got n' = [\got n,\got n]$ the commutator of $\got n$ and by $\got z$ its center. Since $\got n$ is $2$-step nilpotent, $\got n' \subseteq \got z$.

Assume that $\got n' = [\got n,\got n]$ is a non-degenerate subspace of $\got n$, and consider the orthogonal decomposition
\begin{equation}\label{eq:descortn}
    \got n = \got n' \oplus \got v
\end{equation}
with $\got v = (\got n')^{\perp}$. Since $[X,Y] \in \got n'$ for all $X,Y \in \got n$, one can define a linear map $j: \got n' \to \End(\got v)$ such that
\begin{equation}\label{eq:def_map_j}
    \langle [X, Y], Z \rangle = \langle j(Z)X, Y \rangle, \quad \text{for } X, Y \in \got v, \; Z \in \got n'.
\end{equation}
Note that for each $X \in \got v$, $\ad_X|_{\got v}: \got v \to \got n'$ and so $j(Z)(X) = (\ad_X|_{\got v})^{*}$, where $*$ denotes the adjoint map.

It is immediate from~\eqref{eq:def_map_j} that $j(Z) \in \got{so}(\got v, \langle \cdot, \cdot \rangle|_{\got v})$. Observe now that if $j(Z) = 0$, then $\langle [X,Y], Z \rangle = 0$ for all $X,Y \in \got v$. Since $\got n' = [\got v, \got v]$, this cannot happen, and so $j: \got n' \to \got{so}(\got v, \langle \cdot, \cdot \rangle|_{\got v})$ is injective.

\begin{lemma}\label{lemma:nabla_in_2stepnilpo}
    Let $N$ be a simply connected $2$-step nilpotent Lie group with Lie algebra $\got n$, endowed with a left-invariant pseudo-Riemannian metric such that $\got n'$ is non-degenerate. Decompose $\got n$ as in~\eqref{eq:descortn} and define $j$ as in~\eqref{eq:def_map_j}. If $Z, Z' \in \got n'$ and $X, Y \in \got v$, then
    \begin{equation*}
        \left\{
        \begin{array}{l}
            \nabla_{Z} Z' = 0, \\[2pt]
            \nabla_X Z = \nabla_Z X = -\tfrac{1}{2} j(Z) X, \\[2pt]
            \nabla_X Y = \tfrac{1}{2} [X,Y].
        \end{array}
        \right.
    \end{equation*}
\end{lemma}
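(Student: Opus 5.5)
The plan is to apply the Koszul formula for left-invariant vector fields with a left-invariant metric. Since $\langle U,V\rangle$ is constant for left-invariant $U,V$, the formula reduces to
\begin{equation*}
2\langle \nabla_U V, W\rangle = \langle [U,V],W\rangle - \langle [V,W],U\rangle + \langle [W,U],V\rangle, \qquad U,V,W\in\got n.
\end{equation*}
Because $\got n = \got n'\oplus\got v$ is an orthogonal decomposition into non-degenerate subspaces, a vector of $\got n$ is determined by its inner products with all $W\in\got n'$ and all $W\in\got v$. For each of the three cases I will therefore compute $\langle\nabla_U V,W\rangle$ separately for $W\in\got n'$ and for $W\in\got v$. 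Throughout I use the relations $\got n'\subseteq\got z$, $[\got n,\got n]\subseteq \got n'$ and $\langle \got n',\got v\rangle=0$, together with the defining identity \eqref{eq:def_map_j} for $j$.

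\emph{Case $U=Z$, $V=Z'$ in $\got n'$.} Both vectors are central, so $[Z,Z']=0$. For any $W$, the brackets $[Z',W]$ and $[W,Z]$ also vanish, since they contain a central element. All three Koszul terms are therefore zero, and $\nabla_Z Z'=0$.

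\emph{Case $U=X\in\got v$, $V=Y\in\got v$.} Here $[X,Y]\in\got n'$.
\begin{itemize}
\item If $W\in\got n'$, then $W$ is central, so the second and third terms vanish. This leaves $2\langle\nabla_XY,W\rangle=\langle[X,Y],W\rangle$.
\item If $W\in\got v$, the brackets $[Y,W]$ and $[W,X]$ lie in $\got n'$, which is orthogonal to $\got v$, so those terms vanish. The first term $\langle[X,Y],W\rangle$ also vanishes, for the same reason. Hence $\langle \nabla_XY,W\rangle=0$, which agrees with $\langle\tfrac12[X,Y],W\rangle=0$.
\end{itemize}
By non-degeneracy, $\nabla_XY=\tfrac12[X,Y]$.

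\emph{Case of mixed arguments, $X\in\got v$, $Z\in\got n'$.} The torsion-free identity gives $\nabla_XZ-\nabla_ZX=[X,Z]=0$, since $Z$ is central. So it suffices to compute $\nabla_XZ$.
\begin{itemize}
\item If $W\in\got n'$, every bracket involves a central element, so $\langle\nabla_XZ,W\rangle=0$.
\item If $W=Y\in\got v$, the terms $[X,Z]$ and $[Z,Y]$ vanish because $Z$ is central, leaving
\begin{equation*}
2\langle\nabla_XZ,Y\rangle=\langle [Y,X],Z\rangle = \langle j(Z)Y,X\rangle = -\langle j(Z)X,Y\rangle.
\end{equation*}
The second equality is \eqref{eq:def_map_j}, and the last uses $j(Z)\in\got{so}(\got v)$.
\end{itemize}
Since $j(Z)X\in\got v$ is orthogonal to $\got n'$, both components agree with those of $-\tfrac12 j(Z)X$. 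Non-degeneracy then gives $\nabla_XZ=\nabla_ZX=-\tfrac12 j(Z)X$.

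There is no real obstacle in this argument. The only points needing care are the sign conventions in the Koszul formula and in \eqref{eq:def_map_j}. The non-degeneracy of $\got n'$, which makes $\got v$ non-degenerate as well, is what permits reading off vectors from their inner products in the pseudo-Riemannian setting.
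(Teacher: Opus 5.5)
Your proposal is correct and follows essentially the same route as the paper. Both apply the left-invariant Koszul formula case by case, using centrality of $\got n'$, the orthogonality $\got n'\perp\got v$, the definition of $j$ and its skew-symmetry, and torsion-freeness to get $\nabla_Z X=\nabla_X Z$. Your split of the test vector $W$ into its $\got n'$- and $\got v$-cases is just the paper's decomposition $C=C_1+C_2$ written out separately.
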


\begin{proof}
    Let $A, B, C \in \got n$. Since the metric on $N$ is left-invariant, the Koszul formula gives
    \begin{equation}\label{eq:koszul_proof}
        2 \langle \nabla_A B, C \rangle = \langle [A,B], C \rangle - \langle [A,C], B \rangle - \langle [B,C], A \rangle.
    \end{equation}

    Since $\got n' \subseteq \got z$, if we take $A = X \in \got v$ and $B = Z \in \got n'$, then for each $C \in \got n$,
    \begin{equation*}
        2 \langle \nabla_X Z, C \rangle = -\langle [X,C], Z \rangle.
    \end{equation*}
    Write $C = C_1 + C_2$, with $C_1 \in \got n'$ and $C_2 \in \got v$. Then $[X,C] = [X, C_2]$ and
    \begin{equation*}
        \langle [X,C], Z \rangle = \langle [X, C_2], Z \rangle = \langle j(Z) X, C_2 \rangle = \langle j(Z) X, C \rangle,
    \end{equation*}
    since $C_1 \perp j(Z) X$.

    From the Koszul formula~\eqref{eq:koszul_proof} it is immediate that if $A = Z' \in \got n'$, then $\nabla_{Z'} Z = 0$. Hence, if $Z \in \got n'$, then
    \begin{equation*}
        \nabla_A Z = \left\{
        \begin{array}{cl}
            0 & \text{if } A \in \got n', \\[2pt]
            -\tfrac{1}{2} j(Z) A & \text{if } A \in \got v.
        \end{array}
        \right.
    \end{equation*}
    Moreover, if $Z \in \got n'$ and $A \in \got n$, then
    \begin{equation*}
        \nabla_Z A = \nabla_A Z + [Z,A] = \nabla_A Z.
    \end{equation*}

    Now take $A = X$, $B = Y \in \got v$ in~\eqref{eq:koszul_proof}. Since $\got v \perp \got n'$, for each $C \in \got n$,
    \begin{equation*}
        2 \langle \nabla_X Y, C \rangle = \langle [X,Y], C \rangle,
    \end{equation*}
    so $\nabla_X Y = \tfrac{1}{2} [X,Y]$.
\end{proof}

\begin{lemma}\label{lemma:ideala0}
    Let $N$ be a simply connected $2$-step nilpotent Lie group with Lie algebra $\got n$, endowed with a left-invariant pseudo-Riemannian metric such that $\got n'$ is non-degenerate. Decompose $\got n$ as in~\eqref{eq:descortn} and define $j$ as in~\eqref{eq:def_map_j}. Let
    \begin{equation*}
        \got a_0 = \bigcap_{Z \in \got n'} \ker(j(Z)) \subseteq \got v.
    \end{equation*}
    Then:
    \begin{enumerate}
        \item $\got a_0$ is an abelian ideal of $\got n$ such that $\got a_0 \subseteq \got z$ and $\got z = \got n' \oplus \got a_0$.
        \item Every left-invariant vector field in $\got a_0$ is parallel; that is, if $X \in \got a_0$, then $\nabla X = 0$.
        \item Let $\nu_p$ be the nullity subspace of $N$ at $p \in N$, i.e.,
        \begin{equation*}
            \nu_p = \{ v \in T_p N : R(\cdot, \cdot) v \equiv 0 \},
        \end{equation*}
        where $R$ is the curvature tensor of $N$. Then $\got a_0 \cdot p := \{ X_p : X \in \got a_0 \} \subseteq \nu_p$, and if $\got z$ is non-degenerate, then $\got a_0 \cdot p = \nu_p$.
    \end{enumerate}
\end{lemma}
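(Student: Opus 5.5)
Part (1) follows from the definition of $j$ together with the decomposition~\eqref{eq:descortn}. Let $X\in\got v$. Then $X\in\got a_0$ if and only if $\langle [X,Y],Z\rangle=\langle j(Z)X,Y\rangle=0$ for all $Y\in\got v$ and $Z\in\got n'$. Since $\got n'$ is non-degenerate and $[X,Y]\in\got n'$, this says exactly that $[X,\got v]=0$. Because $\got n'\subseteq\got z$, this is in turn equivalent to $X\in\got z$. Hence $\got a_0=\got z\cap\got v$. As $\got n'\subseteq\got z$ and $\got n=\got n'\oplus\got v$, any $W\in\got z$ decomposes as $W=W_1+W_2$ with $W_1\in\got n'\subseteq\got z$, which forces $W_2\in\got z\cap\got v=\got a_0$. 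This proves $\got z=\got n'\oplus\got a_0$. Finally, $\got a_0\subseteq\got z$ gives $[\got n,\got a_0]=0\subseteq\got a_0$, so $\got a_0$ is an abelian ideal.

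For (2), take $X\in\got a_0$ and apply Lemma~\ref{lemma:nabla_in_2stepnilpo}. For $Z\in\got n'$ we get $\nabla_ZX=-\tfrac12 j(Z)X=0$. For $Y\in\got v$ we get $\nabla_YX=\tfrac12[Y,X]=0$, since $X$ is central. By linearity $\nabla_AX=0$ for every left-invariant $A$, and since left-invariant fields span each tangent space, $\nabla X=0$.

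For (3), a parallel vector field $X$ satisfies $R(U,V)X=\nabla_U\nabla_VX-\nabla_V\nabla_UX-\nabla_{[U,V]}X=0$, so $X_p\in\nu_p$. The reverse inclusion when $\got z$ is non-degenerate is the main work. By left-invariance it suffices to work at the identity. Write $\got n=\got n'\oplus\got a_0\oplus\got v_1$ with $\got v_1=\got a_0^\perp\cap\got v$; this is a valid orthogonal splitting because $\got z$ non-degenerate makes $\got a_0=\got z\cap\got v$ non-degenerate inside $\got v$. The $\got a_0$-part of any vector lies in the nullity, so it suffices to show that a vector $W=Z_0+X_0$ with $Z_0\in\got n'$, $X_0\in\got v_1$ and $R(\cdot,\cdot)W=0$ must vanish.

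My plan is to compute a few curvature components with Lemma~\ref{lemma:nabla_in_2stepnilpo}, following Eberlein's formulas:
\begin{align*}
R(X,Y)Z&=-\tfrac14\bigl(j(Z)\,\text{-terms}\bigr)+\tfrac12\, j([X,Y])\,\text{-type terms},\\
R(Z,Z')X&=-\tfrac14[j(Z),j(Z')]X,\\
R(X,Z)Y&=-\tfrac14[X,j(Z)Y]\;(+\dots),\qquad \langle R(X,Y)Y,X\rangle=-\tfrac34\langle[X,Y],[X,Y]\rangle.
\end{align*}
From $R(X,Y)W=0$, I will project onto $\got n'$ and onto $\got v$ separately, so that the $Z_0$- and $X_0$-contributions decouple, since $\nabla$ maps $\got v\times\got v\to\got n'$ and mixed pairs to $\got v$. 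For the $\got n'$-part, the identity $R(Z,X)Z_0=-\tfrac14 j(Z_0)^2X$-type terms, paired with a suitable $X$, should force $j(Z_0)=0$, and injectivity of $j$ then gives $Z_0=0$. For the $\got v_1$-part, the condition $\langle R(Y,X_0)X_0,Y\rangle=0$ summed over the $Y$ gives an expression like $\sum_i\langle j(Z_i)X_0,j(Z_i)X_0\rangle$.

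The obstacle is the indefinite signature: unlike the Riemannian case, a vanishing sum of squares does not force $j(Z)X_0=0$. I would first try to avoid sign arguments by using the full tensor identities $R(Y,X_0)Z=0$ for all $Y$ and $Z$ to extract $j(Z)X_0=0$ (say, from a term $[Y,j(Z)X_0]$ vanishing for all $Y$, which puts $j(Z)X_0$ in $\got a_0\cap\got v_1=0$), combined with the non-degeneracy of $\got n'$ and $\got a_0$ to conclude $X_0\in\got a_0$, hence $X_0=0$.
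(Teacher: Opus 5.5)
Parts (1), (2) and the inclusion $\got a_0\cdot p\subseteq\nu_p$ are fine and essentially coincide with the paper. So does your observation that the curvature components decouple along $\got n'\oplus\got v$. The gap is in the reverse inclusion of (3): both concrete steps you name fail in indefinite signature.

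\textbf{The $Z_0$ step.} Putting $Z=Z_0$ in $R(Z,X)Z_0$ and pairing with $X$ is exactly the positivity argument $\langle j(Z_0)^2X,X\rangle=-\langle j(Z_0)X,j(Z_0)X\rangle$ that you yourself reject for $X_0$. Even the full identity $j(Z_0)^2=0$ only says that the image of $j(Z_0)$ is totally isotropic. If $\got v$ has index $\geq 2$, take $u,v$ spanning a totally isotropic plane and set $j(Z_0)X=\langle u,X\rangle v-\langle v,X\rangle u$. This map is non-zero, skew, and squares to zero. Adding $Z_1$ with $j(Z_1)$ invertible gives $\got a_0=0$ and $\got z=\got n'$ non-degenerate, yet $Z_0\neq 0$. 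In the Lorentzian case an even-rank argument would rescue this, but you do not give it, and the lemma is stated for arbitrary signature. Note also that your $Z_0$ step never uses the non-degeneracy of $\got z$, which is essential there.

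What works is to keep $Z\in\got n'$ arbitrary. The vanishing $R(Y,Z)Z_0=-\tfrac14 j(Z)j(Z_0)Y=0$ for all $Z$ gives $\operatorname{im} j(Z_0)\subseteq\got a_0$. Skew-symmetry together with $\got a_0\subseteq\ker j(Z_0)$ gives $\operatorname{im} j(Z_0)\perp\got a_0$. Non-degeneracy of $\got a_0$ then forces $j(Z_0)=0$.

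\textbf{The $X_0$ step.} The component you propose is $R(Y,X_0)Z=-\tfrac14[Y,j(Z)X_0]+\tfrac14[X_0,j(Z)Y]$. Pairing with $Z'\in\got n'$ shows that its vanishing for all $Y\in\got v$ is equivalent to $[j(Z),j(Z')]X_0=0$ for all $Z'$. This is vacuous whenever $j(\got n')$ is abelian, e.g.\ for the Heisenberg algebra, whose nullity is trivial. So the single term $[Y,j(Z)X_0]$ cannot be isolated from this component.

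That term appears alone in $R(Y,Z)X_0=-\tfrac14[Y,j(Z)X_0]$, which is your own formula for $R(X,Z)Y$ with $Y=X_0$. It vanishes by the nullity condition together with the decoupling. With this component, your endgame ($j(Z)X_0\in\got a_0\cap\got v_1=0$, hence $X_0\in\got a_0\cap\got v_1=0$) is precisely the paper's.

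In short, the paper uses only the components $R(Y,Z)W$ with $Y\in\got v$ and $Z\in\got n'$. Their $\got v$- and $\got n'$-parts are $-\tfrac14 j(Z)j(Z_W)Y$ and $-\tfrac14[Y,j(Z)X_W]$. Both are closed by the same ``lies in $\got a_0$ and is orthogonal to $\got a_0$'' argument, so no sign considerations ever arise.
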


\begin{proof}
    Let $X \in \got a_0$ and $Y$ be any element of $\got v$. Then for each $Z \in \got n'$,
    \begin{equation*}
        \langle [X,Y], Z \rangle = \langle j(Z) X, Y \rangle = 0.
    \end{equation*}
    Since $\got n'$ is non-degenerate, we conclude that $[X,Y] = 0$ for each $Y \in \got v$, and since $\got n' \subseteq \got z$, we also have $[X,Z] = 0$ for each $Z \in \got n'$. Thus $X \in \got z$, so $\got a_0 \subseteq \got z$. Hence $\got a_0$ is an abelian ideal of $\got n$.

    Now let $A \in \got z$ and write $A = Z_A + X_A$ with $Z_A \in \got n'$ and $X_A \in \got v$. Then for each $Y \in \got v$ and each $Z \in \got n'$,
    \begin{equation*}
        0 = \langle [A,Y], Z \rangle = \langle [X_A, Y], Z \rangle = \langle j(Z) X_A, Y \rangle.
    \end{equation*}
    We conclude that $j(Z) X_A = 0$ for each $Z \in \got n'$, and so $X_A \in \got a_0$.

    Since $\got a_0 \subset \got v$, we have $\got n' \cap \got a_0 = \{0\}$. Therefore
    \begin{equation*}
        \got z = \got n' \oplus \got a_0.
    \end{equation*}

    The second assertion is immediate from Lemma~\ref{lemma:nabla_in_2stepnilpo}.

    To prove the last assertion, observe first that, from the second assertion, $\got a_0 \cdot p \subset \nu_p$ for all $p \in N$.

    Note now that $\got z$ is non-degenerate if and only if $\got a_0$ is non-degenerate, and in this case, since $\got a_0 \subseteq \got v \perp \got n'$, $\got a_0$ must be the orthogonal complement of $\got n'$ in $\got z$.

    Since $\got a_0$ and $\nu$ are both left-invariant, it suffices to prove that $\got a_0 \cdot e = \nu_e$, where $e$ is the identity element of $N$. So let $w \in \nu_e$ and let $W \in \got n$ be the left-invariant vector field defined by $w$. Decompose $W = Z_W + X_W$ with $Z_W \in \got n'$ and $X_W \in \got v$. Then for each $A, B \in \got n$,
    \begin{equation*}
        0 = R(A,B) W = R(A,B) Z_W + R(A,B) X_W.
    \end{equation*}
    Take $Y \in \got v$ and $Z \in \got n'$. Then $\nabla_{[Y,Z]} Z_W = \nabla_{[Y,Z]} X_W = 0$, and from Lemma~\ref{lemma:nabla_in_2stepnilpo} we have
    \begin{align*}
        R(Y,Z) Z_W &= \nabla_Y \nabla_Z Z_W - \nabla_Z \nabla_Y Z_W = -\nabla_Z \nabla_Y Z_W \\
                   &= \tfrac{1}{2} \nabla_Z j(Z_W) Y = -\tfrac{1}{4} j(Z) \circ j(Z_W) Y \in \got v
    \end{align*}
    and
    \begin{align*}
        R(Y,Z) X_W &= \nabla_Y \nabla_Z X_W - \nabla_Z \nabla_Y X_W = -\tfrac{1}{2} \nabla_Y j(Z) X_W - \tfrac{1}{2} \nabla_Z [Y, X_W] \\
                   &= -\tfrac{1}{4} [Y, j(Z) X_W] \in \got n'.
    \end{align*}

    So for each $Y \in \got v$ and $Z \in \got n'$, $R(Y,Z) W = 0$ if and only if $R(Y,Z) Z_W = R(Y,Z) X_W = 0$.

    Since $R(Y,Z) Z_W = 0$, we have $j(Z)(j(Z_W) Y) = 0$ for each $Z \in \got n'$, and so $j(Z_W) Y \in \got a_0$ for each $Y \in \got v$. Take an arbitrary $X \in \got a_0$. Then
    \begin{equation*}
        \langle j(Z_W) Y, X \rangle = -\langle Y, j(Z_W) X \rangle = 0,
    \end{equation*}
    and since $\got a_0$ is non-degenerate, $j(Z_W) Y = 0$ for all $Y \in \got v$. Hence $j(Z_W) = 0$, and since $j$ is injective, $Z_W = 0$.

    Now, $[Y, j(Z) X_W] = -4 R(Y,Z) X_W = 0$ for each $Y \in \got v$ and $Z \in \got n'$. Then $j(Z) X_W \in \got z$ for every $Z \in \got n'$, and $j(Z) X_W \in \got v \perp \got n'$. So $j(Z) X_W \in \got a_0$ for each $Z \in \got n'$, and again, if $X \in \got a_0$,
    \begin{equation*}
        \langle j(Z) X_W, X \rangle = -\langle X_W, j(Z) X \rangle = 0.
    \end{equation*}
    We conclude that $j(Z) X_W = 0$ for every $Z \in \got n'$, and so $X_W \in \got a_0$. Hence $W = X_W \in \got a_0$.
\end{proof}

\begin{observation}
    If $\got z$ is non-degenerate, one can define $J: \got z \to \got{so}(\got z^{\perp})$ in the same way as in~\eqref{eq:def_map_j} (cf.~\cite{Eberlein1994, Ovando2013}). With this definition, the ideal $\got a_0$ of Lemma~\ref{lemma:ideala0} coincides with $\ker(J)$ and, in the Riemannian case, corresponds to the flat factor of the Lie group $N$ (cf.~\cite[Prop.~2.7]{Eberlein1994}). We prove next a similar result in the pseudo-Riemannian setting.
\end{observation}

We say that a simply connected pseudo-Riemannian manifold $M$ \emph{splits off a flat factor} if $M$ is isometric to a product $M_0 \times M'$, where $M_0 \simeq \mathbb{R}^k$ is a pseudo-Riemannian flat space form and $M'$ is a pseudo-Riemannian manifold.

The following theorem summarizes the results of~\cite{Wu1964} on the decomposition of pseudo-Riemannian manifolds.

\begin{theorem}\label{theorem:Wu}
    Let $M$ be a geodesically complete simply connected pseudo-Riemannian manifold and let $p \in M$. Let $\Hol(M,p)$ denote the holonomy group of $M$ at $p$. Suppose that $T_p M$ orthogonally splits as $T_p M = V_1 \oplus V_2$, where $V_1$ and $V_2$ are non-degenerate $\Hol(M,p)$-invariant subspaces of $T_p M$, and let $\mathcal{V}_1$ and $\mathcal{V}_2$ be the distributions on $M$ defined by parallel translation of $V_1$ and $V_2$ along $M$. Then:
    \begin{enumerate}
        \item $\mathcal{V}_1$ and $\mathcal{V}_2$ are integrable distributions. Their maximal connected integral submanifolds $M_1$ and $M_2$ are simply connected, geodesically complete and totally geodesic submanifolds of $M$, and $M$ is isometric to the pseudo-Riemannian product $M_1 \times M_2$. Moreover, if $p = (p_1, p_2)$, then $\Hol(M,p)$ splits as $\Hol(M,p) \simeq \Hol(M_1, p_1) \times \Hol(M_2, p_2)$, where $\Hol(M_i, p_i)$ acts trivially on $\Hol(M_j, p_j)$ for $i \neq j$.
        \item If $\Phi(v) = v$ for all $v \in V_1$ and all $\Phi \in \Hol(M,p)$, then $\Hol(M_1, p_1)$ is trivial and hence $M$ splits off a flat factor $M_1$.
        \item $M_1$ is the flat factor of $M$ if $V_1$ is the set of fixed points of $\Hol(M,p)$.
    \end{enumerate}
\end{theorem}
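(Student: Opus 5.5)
The statement is Wu's pseudo-Riemannian de Rham decomposition theorem, quoted from~\cite{Wu1964}. I would follow the classical de Rham argument and check at each step where non-degeneracy replaces positivity.

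\textbf{Integrability and total geodesy.} Extend $V_1$ and $V_2$ by parallel transport. Because they are $\Hol(M,p)$-invariant, this gives well-defined smooth distributions $\mathcal V_1$ and $\mathcal V_2$. They are parallel, non-degenerate and mutually orthogonal, with $TM=\mathcal V_1\oplus\mathcal V_2$.
\begin{itemize}
\item If $X,Y$ are sections of $\mathcal V_i$, then $\nabla_XY$ and $\nabla_YX$ lie in $\mathcal V_i$ by parallelism. Since $\nabla$ is torsion-free, $[X,Y]=\nabla_XY-\nabla_YX$ also lies in $\mathcal V_i$, so Frobenius gives integrability.
\item The same fact, $\nabla_XY\in\mathcal V_i$, says the leaves are totally geodesic.
\item Non-degeneracy is what makes the second fundamental form meaningful: it guarantees $\mathcal V_i^\perp=\mathcal V_j$ is a complement.
\end{itemize}

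\textbf{Local splitting.} Around any point, choose coordinates adapted to both foliations. Since both distributions are parallel and orthogonal, the metric has block form $g=g_1(x)+g_2(y)$; the blocks do not mix variables because the leaves of each foliation are totally geodesic. Hence $M$ is locally a pseudo-Riemannian product.

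\textbf{Global splitting.} This is the main obstacle. In the Riemannian case one uses completeness together with simple connectivity and a metric-space argument, and that argument uses distance, which is unavailable here. I would instead follow Wu's approach, which is essentially affine: geodesic completeness allows one to develop along broken geodesics.
\begin{itemize}
\item Define $\Phi:M_1\times M_2\to M$ by sending $(q_1,q_2)$ to the endpoint of the $\mathcal V_2$-geodesic curve from $q_1$ corresponding to the $M_2$-curve from $p$ to $q_2$.
\item Use the local product structure and parallelism to show that $\Phi$ is well defined up to homotopy.
\item Show that $\Phi$ is a local isometry and a covering map. The covering property follows from completeness of affine connections, via the standard fact that an affine map between complete affinely connected manifolds with parallel torsion-free structure is a covering.
\item Simple connectivity of $M$ then makes $\Phi$ an isometry.
\end{itemize}
Simple connectivity of $M_1$ and $M_2$ follows from the fact that $\pi_1(M)=\pi_1(M_1)\times\pi_1(M_2)$. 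Each leaf is geodesically complete because it is totally geodesic in a complete manifold.

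\textbf{Holonomy.} Parallel transport in a product is the product of the parallel transports along the two projections. Hence $\Hol(M,p)=\Hol(M_1,p_1)\times\Hol(M_2,p_2)$, with each factor acting trivially on the other factor's tangent space.

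\textbf{Parts (2) and (3).} Suppose $\Hol(M,p)$ fixes $V_1$ pointwise. Then $\Hol(M_1,p_1)$, being the restriction of the holonomy to $V_1$, is trivial. A complete, simply connected manifold with trivial holonomy is flat, and the exponential map identifies it with a flat $\R^k$ of the corresponding signature.

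For part (3), let $V_1$ be the full set of fixed points, assumed non-degenerate. Then $V_1$ is invariant, and so is $V_2=V_1^\perp$, since the holonomy acts by isometries. Moreover, $M_2$ has no nonzero fixed vectors. Therefore $M_1$ is the maximal flat factor.
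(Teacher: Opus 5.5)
The paper gives no proof of this statement; it is quoted from \cite{Wu1964}. Your outline is the classical de Rham argument in the form Wu uses: parallel (hence involutive) distributions with totally geodesic leaves, a local product in adapted coordinates, and a global product obtained by developing broken geodesics plus a covering argument. So the strategy is right. The local splitting, the holonomy computation and parts (2) and (3) are fine (part (3) using that $V_1$ is non-degenerate).

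The global step, however, is circular as written. Homotopy invariance only shows that $\Phi(q_1,q_2)$ depends on the homotopy classes of the curves in $M_1$ and $M_2$ used to build it. What you actually obtain is therefore a map $\Phi\colon \widetilde{M}_1\times\widetilde{M}_2\to M$ on the universal covers. To have $\Phi$ on $M_1\times M_2$ you need the leaves to be simply connected. You only derive that at the end, from $\pi_1(M)=\pi_1(M_1)\times\pi_1(M_2)$, and that identity presupposes the isometry $M\cong M_1\times M_2$ you are constructing.

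The repair is easy:
\begin{enumerate}
\item Run the covering argument on $\widetilde{M}_1\times\widetilde{M}_2$, which is complete because coverings and products of complete manifolds are complete.
\item Conclude from $\pi_1(M)=1$ that $\Phi$ is an isometry.
\item Let $\tilde p_2$ be the class of the constant curve at $p$. Then $\Phi(\cdot,\tilde p_2)$ is just the universal covering $\widetilde{M}_1\to M_1$ followed by the inclusion, because anti-developing a constant curve returns its starting point.
\item Injectivity of $\Phi$ makes this covering trivial, so $M_1=\widetilde{M}_1$. Likewise $M_2=\widetilde{M}_2$, and only now does $\Phi$ descend to $M_1\times M_2$.
\end{enumerate}

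Two smaller points. First, completeness of the leaves does not follow from total geodesy alone; an open disc in a plane is totally geodesic. You need that each leaf is a maximal integral manifold of a \emph{parallel} distribution. Then an $M$-geodesic with initial velocity in $\mathcal V_i$ has velocity that stays in $\mathcal V_i$, so it never leaves the leaf and is defined for all time there. Second, the covering lemma you need is simply this: a local isometry from a geodesically complete pseudo-Riemannian manifold to a connected one is a covering map. The Riemannian proof via normal neighbourhoods and lifting of geodesics carries over verbatim. No hypothesis about ``parallel torsion-free structure'' is needed.
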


\begin{observation}
    Given a vector space $W$ with bilinear form $\langle \cdot, \cdot \rangle$, we say for simplicity that $W$ is \emph{Riemannian} (or that $\langle \cdot, \cdot \rangle$ is Riemannian) if $\langle \cdot, \cdot \rangle$ is positive-definite. Similarly, $W$ is \emph{Lorentzian} (or $\langle \cdot, \cdot \rangle$ is Lorentzian) if $\langle \cdot, \cdot \rangle$ has signature one, and $W$ is \emph{degenerate} if $\langle \cdot, \cdot \rangle$ is degenerate.
\end{observation}

As a consequence of Lemma~\ref{lemma:ideala0} and Theorem~\ref{theorem:Wu} we have:

\begin{proposition}\label{prop:not_split-off_a_flat_factor}
    Let $N$ be a simply connected $2$-step nilpotent Lie group with Lie algebra $\got n$, endowed with a left-invariant Lorentzian metric such that $\got n'$ is non-degenerate. If $N$ does not split off a flat factor, then:
    \begin{enumerate}
        \item If $\got z$ is non-degenerate, the ideal $\got a_0$ of Lemma~\ref{lemma:ideala0} is trivial, and so $\got z = \got n'$.
        \item If $\got z$ is degenerate, then $\got z = \got n' \oplus \R E$, where $E$ is a lightlike left-invariant vector field.
    \end{enumerate}
\end{proposition}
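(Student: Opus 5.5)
The plan is to combine Lemma~\ref{lemma:ideala0}(2) with Theorem~\ref{theorem:Wu}(2): any non-degenerate subspace of $\got a_0$ gives parallel left-invariant fields and hence a flat factor. First I will check that $N$ is geodesically complete. This is standard for left-invariant pseudo-Riemannian metrics on $2$-step nilpotent groups, since the geodesic equations reduce to a linear system via the Euler--Arnold equation. I would either cite this or verify it directly from Lemma~\ref{lemma:nabla_in_2stepnilpo}: the $\got n'$-component of the velocity is constant, and the $\got v$-component satisfies a linear ODE.

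Next, fix $e\in N$ and let $W\subseteq\got a_0$ be a non-degenerate subspace. By Lemma~\ref{lemma:ideala0}(2) every $X\in W$ is a parallel vector field, so $\Hol(N,e)$ fixes $W$ pointwise. Since holonomy acts by isometries, $W^{\perp}$ is also $\Hol(N,e)$-invariant and non-degenerate, and $T_eN=W\oplus W^{\perp}$. Theorem~\ref{theorem:Wu}(2) then says $N$ splits off a flat factor of dimension $\dim W$. So, under the hypothesis that $N$ does not split off a flat factor, $\got a_0$ contains no nonzero non-degenerate subspace.

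Case (1): $\got z$ is non-degenerate. As noted in the proof of Lemma~\ref{lemma:ideala0}, $\got z$ is non-degenerate iff $\got a_0$ is, so $\got a_0$ itself is non-degenerate. By the previous step $\got a_0=0$, and Lemma~\ref{lemma:ideala0}(1) gives $\got z=\got n'$.

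Case (2): $\got z$ is degenerate. Then $\got a_0\neq 0$ and $\got a_0$ contains no nonzero non-degenerate subspace. In particular it has no spacelike vector (a spacelike $X$ spans a non-degenerate line) and no timelike vector. So $\got a_0$ is totally isotropic. Since $\got a_0\subseteq\got v$ and $\got v$ has index at most one (the ambient metric is Lorentzian), a totally isotropic subspace has dimension at most $1$. Hence $\got a_0=\R E$ with $E$ lightlike, and $\got z=\got n'\oplus\R E$ by Lemma~\ref{lemma:ideala0}(1).

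The main obstacle is verifying the hypotheses of Theorem~\ref{theorem:Wu}, namely geodesic completeness, together with the claim that a parallel non-degenerate subspace yields a genuine flat factor. Everything else is linear algebra.
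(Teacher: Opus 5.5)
Your proposal is correct and follows essentially the same route as the paper. Parallel left-invariant fields in $\got a_0$ are fixed by holonomy, so by Theorem~\ref{theorem:Wu} every nonzero non-degenerate subspace of $\got a_0$ produces a flat factor; this forces $\got a_0=0$ in case (1) and rules out any non-null part of $\got a_0$ in case (2). Your two departures are minor: you verify the geodesic completeness that Theorem~\ref{theorem:Wu} requires, which the paper leaves implicit, and in case (2) you replace the paper's splitting $\got a_0=\got m_0\oplus\R E$ with the equivalent observation that a subspace of a Lorentzian space containing no non-null vectors is totally isotropic and hence at most one-dimensional.
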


\begin{proof}
    Recall that every left-invariant vector field in $\got a_0$ is parallel. Hence $\got a_0$ defines a parallel distribution, and its elements are fixed vectors of the holonomy group of $N$.

    If $\got z$ is non-degenerate, then $\got a_0$ is non-degenerate and hence defines a flat factor of $N$. Since $N$ does not split off a flat factor, $\got a_0$ must be trivial, and so $\got z = \got n'$.

    If $\got z$ is degenerate, then $\got a_0$ is degenerate and contains a single lightlike direction, say $\R E$. It is standard that $\got a_0$ then decomposes as an orthogonal direct sum $\got a_0 = \got m_0 \oplus \R E$, where $\got m_0$ is a Riemannian subspace of $\got a_0$. But then $\got m_0$ is contained in the set of fixed points of the holonomy of $N$, and hence the left-invariant distribution defined by $\got m_0$ is parallel, flat and non-degenerate. Since $N$ does not split off a flat factor, it follows that $\got m_0 = \{0\}$, and so $\got z = \got n' \oplus \R E$.
\end{proof}

\newpage
\begin{observation}\label{rem:flat_factor}
    If $N$ satisfies
    \begin{itemize}
        \item $\got z = \got n'$, or
        \item $\got z$ is degenerate and $\dim \got z = \dim \got n' + 1$,
    \end{itemize}
    then $N$ does not split off a flat factor. In other words, the condition of not splitting off a flat factor is characterized by either of the conditions above.
\end{observation}

\begin{proposition}\label{prop:lightl_vect_skew-sym_der}
    Let $N$ be a simply connected $2$-step nilpotent Lie group with Lie algebra $\got n$, endowed with a left-invariant Lorentzian metric such that $\got n'$ is non-degenerate. Let $j: \got n' \to \got{so}(\got v)$ be defined as in~\eqref{eq:def_map_j}. If $L: \got n \to \got n$ is a skew-symmetric derivation, then
    \begin{equation*}
        [L, j(Z)] = j\bigl(L(Z)\bigr) \quad \text{for all } Z \in \got n'.
    \end{equation*}
\end{proposition}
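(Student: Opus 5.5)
The plan is to show first that $L$ preserves the decomposition~\eqref{eq:descortn}, and then to compute $\langle [L,j(Z)]X, Y\rangle$ directly from the defining identity~\eqref{eq:def_map_j}.

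\emph{Step 1: $L$ preserves $\got n'$ and $\got v$.} Since $L$ is a derivation, $L[X,Y] = [LX,Y] + [X,LY]$, so $L(\got n') \subseteq \got n'$. Since $L$ is skew-symmetric, it preserves orthogonal complements: for $V \in \got v$ and $Z \in \got n'$ we have $\langle LV, Z\rangle = -\langle V, LZ\rangle = 0$. Hence $L(\got v) \subseteq \got v$, and in particular $L|_{\got v} \in \got{so}(\got v)$. Non-degeneracy of $\got n'$ is what guarantees that $\got n = \got n' \oplus \got v$, so this step uses the hypothesis in an essential way.

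\emph{Step 2: pairing computation.} Fix $X, Y \in \got v$ and $Z \in \got n'$. Using skew-symmetry of $L$ on $\got v$ and then~\eqref{eq:def_map_j}, which is legitimate because $LX, LY \in \got v$ by Step 1, we get
\begin{align*}
\langle [L,j(Z)]X, Y\rangle &= \langle L j(Z) X, Y\rangle - \langle j(Z) L X, Y\rangle = -\langle j(Z)X, LY\rangle - \langle j(Z)LX, Y\rangle \\
&= -\langle [X, LY], Z\rangle - \langle [LX, Y], Z\rangle = -\langle L[X,Y], Z\rangle \\
&= \langle [X,Y], LZ\rangle = \langle j(LZ)X, Y\rangle .
\end{align*}
The last equality uses $LZ \in \got n'$ from Step 1. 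Note also that $[L, j(Z)]$ maps $\got v$ to $\got v$, since $L|_{\got v}$ does.

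\emph{Step 3: conclusion.} The metric restricted to $\got v$ is non-degenerate, because $\got v$ is the orthogonal complement of the non-degenerate subspace $\got n'$. Since the identity in Step 2 holds for all $X, Y \in \got v$, it gives $[L|_{\got v}, j(Z)] = j(L(Z))$.

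The only point that needs care is Step 1, namely the invariance of $\got v$ under $L$. This is also where the hypotheses of the proposition are actually used; the rest is a direct computation.
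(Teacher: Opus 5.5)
Your proof is correct and is essentially the paper's argument: the paper runs the same chain of equalities in the reverse direction, starting from $\langle j(L(Z))X,Y\rangle$ and ending at $\langle [L,j(Z)]X,Y\rangle$. Your Step 1, showing that $L$ preserves $\got n'$ and $\got v$, makes explicit an invariance that the paper uses tacitly when it applies \eqref{eq:def_map_j} to $LX$, $LY$ and $LZ$.
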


\begin{proof}
    Let $X, Y \in \got v$. Then
    \begin{align*}
        \langle j(L(Z)) X, Y \rangle &= \langle [X,Y], L(Z) \rangle = -\langle L[X,Y], Z \rangle \\
                                     &= -\bigl(\langle [L(X), Y], Z \rangle + \langle [X, L(Y)], Z \rangle\bigr) \\
                                     &= -\langle j(Z) L(X), Y \rangle - \langle j(Z) X, L(Y) \rangle \\
                                     &= -\langle j(Z) L(X), Y \rangle + \langle L \, j(Z) X, Y \rangle = \langle [L, j(Z)](X), Y \rangle.
    \end{align*}
    Since $X, Y \in \got v$ are arbitrary, we conclude that $j(L(Z)) = [L, j(Z)]$.
\end{proof}

\section{Pseudo-Riemannian naturally reductive spaces}\label{sec:3_Pseudo-Riemannian_naturally_reductive_spaces}

Let $N$ be a simply connected (not necessarily $2$-step nilpotent) Lie group endowed with a left-invariant pseudo-Riemannian metric. Denote by $\Iso(N)$ the full isometry group of $N$ and let $H = \Iso(N)_e$ be the isotropy group at the identity element $e \in N$. It is standard that
\begin{equation*}
    \Iso(N) = L_N \cdot H,
\end{equation*}
where $L_N \simeq N$ is the subgroup of $\Iso(N)$ consisting of left-translations. Note that $H \cap L_N = \{ \Id \}$.

Consider the Lie subgroup $H^{\aut}$ of $H$ consisting of the isometric automorphisms of $N$, that is,
\begin{equation*}\label{eq:haut}
    H^{\aut} = \Aut(N) \cap \Iso(N) = \Aut(N) \cap H,
\end{equation*}
and the Lie subgroup $\Iso^{\aut}(N)$ of $\Iso(N)$ given by
\begin{equation*}
    \Iso^{\aut}(N) = L_N \cdot H^{\aut}.
\end{equation*}

\begin{proposition}\label{prop:Nnormal}
    Let $N$ be a simply connected Lie group endowed with a left-invariant pseudo-Riemannian metric. Then $\Iso^{\aut}(N) = N \rtimes H^{\aut}$ is a semidirect product, and it is the normalizer of $N$ in $\Iso(N)$.
\end{proposition}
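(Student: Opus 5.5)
The plan is to prove two separate claims: first that $\Iso^{\aut}(N)$ is a semidirect product $L_N \rtimes H^{\aut}$, and then that it coincides with the normalizer $\mathcal N$ of $L_N$ in $\Iso(N)$.

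For the semidirect product structure, I would first check that $L_N$ is normalized by $H^{\aut}$. If $\varphi \in H^{\aut}$ and $g \in N$, then for every $x \in N$
\begin{equation*}
    \varphi \circ L_g \circ \varphi^{-1}(x) = \varphi\bigl(g\,\varphi^{-1}(x)\bigr) = \varphi(g)\,x,
\end{equation*}
since $\varphi$ is an automorphism. Hence $\varphi L_g \varphi^{-1} = L_{\varphi(g)} \in L_N$. It follows that $L_N \cdot H^{\aut}$ is a subgroup of $\Iso(N)$ in which $L_N$ is normal. Moreover $H^{\aut} \cap L_N \subseteq H \cap L_N = \{\Id\}$, so the product is a semidirect product $N \rtimes H^{\aut}$, with $H^{\aut}$ acting on $N$ by automorphisms.

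For the normalizer, the inclusion $\Iso^{\aut}(N) \subseteq \mathcal N$ is exactly the computation above. For the converse, take $f \in \mathcal N$. Using $\Iso(N) = L_N \cdot H$, write $f = L_g \circ h$ with $h \in H$. Since $L_g \in \mathcal N$, also $h = L_g^{-1} f \in \mathcal N$. So $h$ fixes $e$ and $h L_x h^{-1} = L_{\psi(x)}$ for some map $\psi: N \to N$. This $\psi$ is a group automorphism: it is a bijection because conjugation is, and it is multiplicative because $L$ is an injective homomorphism.

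It remains to show that $h$ itself equals $\psi$, and hence is an automorphism. Evaluating $h L_x = L_{\psi(x)} h$ at $e$ and using $h(e) = e$ gives $h(x) = \psi(x)$. Therefore $h \in \Aut(N) \cap H = H^{\aut}$, and $f = L_g h \in L_N \cdot H^{\aut}$.

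The only delicate step is this identification of the conjugation automorphism $\psi$ with $h$ itself, and evaluation at $e$ settles it. Smoothness causes no difficulty, since $h$ is an isometry and therefore smooth. Simple connectedness is not actually needed for this argument; it only matters if one wants to pass between group automorphisms of $N$ and Lie algebra automorphisms of $\got n$.
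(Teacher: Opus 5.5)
Your proof is correct and follows essentially the same route as the paper: normality comes from the identity $\varphi L_g \varphi^{-1} = L_{\varphi(g)}$, and the normalizer claim comes from evaluating $h L_g h^{-1} = L_{g'}$ at $e$ to get $g' = h(g)$, which makes $h$ multiplicative. The only cosmetic difference is that you argue element-wise via $\Iso(N) = L_N \cdot H$ and pass through the auxiliary automorphism $\psi$. The paper instead writes a subgroup normalizing $N$ as $N \rtimes K$, with $K$ in the isotropy, and shows directly that $K \subset H^{\aut}$.
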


\begin{proof}
    First, consider the subgroup $\Iso^{\aut}(N) = L_N \cdot H^{\aut}$. Since $H^{\aut} \subset \Iso(N)_e$, we have $N \cap H^{\aut} = \{e\}$. Moreover, for any $g, n \in N$ and $h \in H^{\aut}$,
    \begin{equation*}
        (L_g h) L_n (L_g h)^{-1} = L_g (h L_n h^{-1}) L_{g^{-1}} = L_g L_{h(n)} L_{g^{-1}} = L_{g \, h(n) \, g^{-1}},
    \end{equation*}
    which belongs to $N$. This shows that $N$ is a normal subgroup of $\Iso^{\aut}(N)$, and therefore $\Iso^{\aut}(N) = N \rtimes H^{\aut}$.

    Let $G$ be a subgroup of $\Iso(N)$ such that $N$ is a normal subgroup of $G$. Then $G = N \rtimes K$, where $K$ is a subgroup of the isotropy subgroup $\Iso(N)_e$.

    We claim that $K \subset H^{\aut}$. Let $h \in K$ and $g \in N$. Since $h$ normalizes $N$, there exists $g' \in N$ such that
    \begin{equation*}
        h \circ L_g \circ h^{-1} = L_{g'}.
    \end{equation*}
    Evaluating both sides at the identity $e$, we obtain
    \begin{equation*}
        g' = L_{g'}(e) = h \circ L_g \circ h^{-1}(e) = h\bigl(g \cdot h^{-1}(e)\bigr) = h(g),
    \end{equation*}
    and therefore
    \begin{equation*}
        h \circ L_g \circ h^{-1} = L_{h(g)}, \qquad \text{equivalently,} \qquad h \circ L_g = L_{h(g)} \circ h.
    \end{equation*}
    Thus, for any $a \in N$,
    \begin{equation*}
        h(g a) = h(L_g(a)) = L_{h(g)}(h(a)) = h(g) h(a),
    \end{equation*}
    which shows that $h$ is an automorphism of $N$. Hence $K \subset H^{\aut}$, and so $G \subseteq \Iso^{\aut}(N)$. Since $N$ is a normal subgroup of $\Iso^{\aut}(N)$, we conclude that $\Iso^{\aut}(N)$ is the normalizer of $N$ in $\Iso(N)$.
\end{proof}

Let now $N$ be a simply connected $2$-step nilpotent Lie group endowed with a left-invariant metric. If $N$ with this metric is a Riemannian manifold, then $\Iso(N) = \Iso^{\aut}(N)$ (cf.~\cite{Wolf1962}). This is no longer true for a pseudo-Riemannian nilmanifold (cf.~\cite{DelBarcoOvando2014}); however, by Proposition~\ref{prop:Nnormal}, $\Iso^{\aut}(N)$ is the normalizer of $N$ in $\Iso(N)$ and acts transitively on $N$ (since it contains all left translations).

Let $\got n$ be the Lie algebra of $N$, with the metric induced by that of $N$. Since $N$ is simply connected, $\Aut(N) \simeq \Aut(\got n)$. Therefore
\begin{equation*}
    H^{\aut} \simeq \OO(\got n) \cap \Aut(\got n),
\end{equation*}
where $\OO(\got n)$ is the orthogonal group of $\got n$ with respect to the given metric. With these identifications, the Lie algebra of $\Iso^{\aut}(N)$ is $\got{iso}^{\aut}(N) \simeq \got n \rtimes \got h^{\aut}$, where
\begin{equation}\label{eq:haut_lie}
    \got h^{\aut} = \operatorname{Der}(\got n) \cap \got{so}(\got n)
\end{equation}
is the Lie algebra of skew-symmetric derivations of $\got n$.

Recall that, under the identification $\got{iso}^{\aut}(N) \simeq \got n \rtimes \got h^{\aut}$, if $U, V \in \got n$ and $A, B \in \got h^{\aut}$, the Lie bracket of $\got{iso}^{\aut}(N)$ is given by
\begin{align}\label{eq:corcheteiso}
    [U, V]_{\got{iso}^{\aut}(N)} = [U, V]_{\got n}, && [A, B]_{\got{iso}^{\aut}(N)} = [A, B]_{\got h^{\aut}}, && [A, U]_{\got{iso}^{\aut}(N)} = A(U).
\end{align}

Since $\Iso^{\aut}(N)$ acts transitively on $N$ and $\Iso^{\aut}(N)_e = H^{\aut}$,
\begin{equation*}
    N = \Iso^{\aut}(N) / H^{\aut}
\end{equation*}
is a pseudo-Riemannian homogeneous space. We are interested in determining conditions under which $N$ is naturally reductive with respect to the presentation group $\Iso^{\aut}(N)$.

Let $M = G/K$ be a pseudo-Riemannian homogeneous space, where $G$ is a Lie subgroup of $\Iso(M)$ and $K = G_e$ is the isotropy subgroup at the identity $e$. If there exists an $\ad(\got k)$-invariant subspace $\got m \subset \got g$ such that
\begin{equation}\label{eq:desc_reduct}
    \got g = \got k \oplus \got m,
\end{equation}
$\got m$ is called a \emph{reductive complement}, and the decomposition~\eqref{eq:desc_reduct} a \emph{reductive decomposition} of $\got g$.

Given a reductive complement $\got m$, the pseudo-Riemannian metric on $M$ induces an $\Ad(K)$-invariant non-degenerate symmetric bilinear form on $\got m$, which we denote by $\langle \cdot, \cdot \rangle_{\got m}$. More precisely, identifying $\got m$ with the tangent space $T_e M$ via the natural projection $\pi: G \to G/K$, the restriction of the metric at $e$ defines an inner product on $\got m$ that is invariant under the isotropy representation.

Recall that $M$ is \emph{naturally reductive} with respect to $G$ if there exists a reductive decomposition as in~\eqref{eq:desc_reduct} such that, for all $U, V, W \in \got m$,
\begin{equation}\label{eq:DefNatRed}
    \langle [U,V]_{\got m}, W \rangle + \langle V, [U,W]_{\got m} \rangle = 0,
\end{equation}
where $[\cdot,\cdot]_{\got m}$ denotes the $\got m$-component of the Lie bracket in $\got g$.

To determine whether a homogeneous space is naturally reductive without explicitly producing a reductive complement, we now introduce the \emph{Geodesic Lemma} (cf.~\cite{ChenWolfZhang2020, NikolayevskyWolf2022}), which describes naturally reductive spaces as a particular class of G.O.\ spaces.

\begin{lemma}[Geodesic Lemma]\label{lema:geodesicLemma}
    Let $M = G/K$ be a pseudo-Riemannian homogeneous space, where $G$ is a Lie subgroup of $\Iso(M)$ and $K = G_e$ is the isotropy subgroup at the identity $e$. Suppose that $M$ admits a reductive complement $\got m$. Then $M$ is a naturally reductive space (possibly with respect to a different reductive decomposition) if and only if there exists a linear and $\ad(\got k)$-equivariant map $\xi: \got m \to \got k$ such that
    \begin{equation}\label{eq:geodesic_graph}
        \langle [X + \xi(X), Z]_{\got m}, X \rangle_{\got m} = 0 \quad \text{for all } X, Z \in \got m,
    \end{equation}
    where $[\cdot,\cdot]_{\got m}$ denotes the $\got m$-component of the Lie bracket in $\got g$.
\end{lemma}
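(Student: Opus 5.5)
The plan is to prove both implications by realizing any other reductive complement as the graph of a linear map $\xi\colon \got m\to\got k$, and then comparing the brackets projected to $\got m$. Let $p\colon \got g\to\got m$ be the projection along $\got k$. Any reductive complement $\got m'$ is also a complement of $\got k$, so it is identified with $T_eM$ through $p$. The metric $\langle\cdot,\cdot\rangle_{\got m'}$ is then the pull-back of $\langle\cdot,\cdot\rangle_{\got m}$ under $p|_{\got m'}$. Moreover, for $U,W\in\got m'$ we have $p([U,W]_{\got m'})=[U,W]_{\got m}$, since the two components differ by an element of $\got k$.

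Two preliminary facts will be used throughout.

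\emph{First fact.} Since $\got m$ is $\ad(\got k)$-invariant and $\langle\cdot,\cdot\rangle_{\got m}$ is $\Ad(K)$-invariant, each $\ad(A)|_{\got m}$ with $A\in\got k$ is skew-symmetric.

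\emph{Second fact.} Condition~\eqref{eq:DefNatRed} is equivalent to $\langle [U,W]_{\got m},U\rangle=0$ for all $U,W$. To see this, put $T(U,V,W)=\langle [U,V]_{\got m},W\rangle$. Then $T$ is skew in its first two arguments. Condition~\eqref{eq:DefNatRed} says $T$ is skew in the last two, hence totally skew. The quadratic condition says $T$ is skew in the first and third arguments, which together with skewness in the first two again gives total skewness.

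\emph{Graph description.} Given a complement $\got m'$ of $\got k$, there is a unique linear $\xi$ with $\got m'=\{X+\xi(X):X\in\got m\}$. I will check that $\got m'$ is $\ad(\got k)$-invariant if and only if $\xi$ is $\ad(\got k)$-equivariant. For $A\in\got k$ we have $[A,X+\xi(X)]=[A,X]+[A,\xi(X)]$, where $[A,X]\in\got m$ and $[A,\xi(X)]\in\got k$. This element lies in $\got m'$ exactly when $[A,\xi(X)]=\xi([A,X])$.

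\emph{Main computation.} This is the step I expect to need the most care. Take $U=X+\xi(X)$ and $W=Z+\xi(Z)$ in $\got m'$. Then
\begin{equation*}
[U,W]_{\got m}=[X+\xi(X),Z]_{\got m}+[X,\xi(Z)]_{\got m}+[\xi(X),\xi(Z)]_{\got m}.
\end{equation*}
The last term vanishes because $[\xi(X),\xi(Z)]\in\got k$. The middle term equals $-[\xi(Z),X]$, which lies in $\got m$ by $\ad(\got k)$-invariance. Its pairing with $X$ is
\begin{equation*}
\langle [X,\xi(Z)],X\rangle=-\langle \ad(\xi(Z))X,X\rangle=0
\end{equation*}
by the first fact. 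Using $p(U)=X$, this gives
\begin{equation*}
\langle [U,W]_{\got m'},U\rangle_{\got m'}=\langle [X+\xi(X),Z]_{\got m},X\rangle_{\got m}.
\end{equation*}

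\emph{Conclusion.} For ($\Rightarrow$), let $\got m'$ be a naturally reductive complement and write it as a graph. The map $\xi$ is equivariant by the graph description. By the second fact applied to $\got m'$, the left side of the last identity vanishes, which is exactly~\eqref{eq:geodesic_graph}. For ($\Leftarrow$), define $\got m'$ as the graph of the given $\xi$. It is a reductive complement by equivariance. By the last identity, \eqref{eq:geodesic_graph} gives $\langle [U,W]_{\got m'},U\rangle_{\got m'}=0$ for all $U,W\in\got m'$. The second fact then yields~\eqref{eq:DefNatRed} for $\got m'$.
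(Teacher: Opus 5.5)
Your proof is correct and complete. The graph description of complements, the equivalence between $\ad(\got k)$-invariance of $\got m'=\{X+\xi(X)\}$ and $\ad(\got k)$-equivariance of $\xi$, the vanishing of $\langle [X,\xi(Z)],X\rangle$ by skewness of $\ad(\xi(Z))|_{\got m}$, and the reduction of \eqref{eq:DefNatRed} to the quadratic condition through total skewness of $T$ all check out. None of these steps uses definiteness of the metric.

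The paper gives no proof of this lemma and simply quotes it from \cite{ChenWolfZhang2020, NikolayevskyWolf2022}. Your argument makes it self-contained, and it matches the paper's convention that a reductive complement need only be $\ad(\got k)$-invariant, which is exactly what your equivariance step uses.
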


In Lemma~\ref{lema:geodesicLemma}, $\ad(\got k)$-equivariant means that, for each $L \in \got k$ and each $X \in \got m$, $\xi([L,X]) = [L, \xi(X)]$, where $[\cdot,\cdot]$ is the Lie bracket in $\got g$.

The map $\xi: \got m \to \got k$ in Lemma~\ref{lema:geodesicLemma} is called a \emph{geodesic graph}. If $M$ is a Riemannian homogeneous space, one only requires the existence of a linear map $\xi: \got m \to \got k$ such that~\eqref{eq:geodesic_graph} holds (see~\cite{ChenNikolayevskyNikonorov2022}).

We will use Lemma~\ref{lema:geodesicLemma} to give conditions under which $N$ is naturally reductive with respect to a group of isometries of the form $G = N \rtimes H$, where $H$ is a subgroup of $H^{\aut}$. This generalizes the conditions found in~\cite{Ovando2013}, even when $\got z$ is degenerate.

\begin{lemma}\label{lemma:nat_red_cond}
    Let $N$ be a nilpotent Lie group with Lie algebra $\got n$, endowed with a left-invariant pseudo-Riemannian metric such that the commutator $\got n'$ is non-degenerate. Let $\got v$ be an orthogonal complement of $\got n'$ in $\got n$, and let $j: \got n' \to \got{so}(\got v)$ be defined by~\eqref{eq:def_map_j}. Let $H = H^{\aut}$, and assume that $N$ does not split off a flat factor. Then the following statements are equivalent:
    \begin{enumerate}
        \item\label{lemma:nat_red_cond_item_1} $N$ is naturally reductive with respect to $G = N \rtimes H$.
        \item\label{lemma:nat_red_cond_item_2} There exists a linear and $\ad(\got h)$-equivariant map $\tilde{\xi}: \got n' \to \got h$ such that
        \begin{equation*}
            j(Z) = \tilde{\xi}(Z)\big|_{\got v}, \quad \text{for all } Z \in \got n'.
        \end{equation*}
        \item\label{lemma:nat_red_cond_item_3} There exists a linear map $\tau: \got n' \to \got{so}(\got n')$ such that
        \begin{equation*}
            [j(Z), j(Z')] = j\bigl(\tau_Z(Z')\bigr), \quad \text{for all } Z, Z' \in \got n'.
        \end{equation*}
    \end{enumerate}
\end{lemma}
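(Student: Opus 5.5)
The plan is to prove the cycle (\ref{lemma:nat_red_cond_item_1})$\Leftrightarrow$(\ref{lemma:nat_red_cond_item_2}) via the Geodesic Lemma~\ref{lema:geodesicLemma}, and then (\ref{lemma:nat_red_cond_item_2})$\Leftrightarrow$(\ref{lemma:nat_red_cond_item_3}) using Proposition~\ref{prop:lightl_vect_skew-sym_der} together with the injectivity of $j$.

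\textbf{Preliminary observations.} First, $\got m=\got n$ is a reductive complement for $\got g=\got n\rtimes\got h$, since $[\got h,\got n]=\got h(\got n)\subseteq\got n$ by~\eqref{eq:corcheteiso}. Second, every $L\in\got h$ preserves both $\got n'$ and $\got v$: it preserves $\got n'$ because it is a derivation, and $\got v=(\got n')^\perp$ because it is skew-symmetric.

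\textbf{The geodesic-graph condition.} For $X=Z_0+V$ (with $Z_0\in\got n'$, $V\in\got v$) and any $W\in\got n$, we have
\begin{equation*}
[X+\xi(X),W]_{\got m}=[X,W]+\xi(X)W .
\end{equation*}
Let $W_{\got v}$ be the $\got v$-component of $W$. Then $\langle [X,W],X\rangle=\langle j(Z_0)V,W_{\got v}\rangle$, while skew-symmetry gives $\langle\xi(X)W,X\rangle=-\langle W,\xi(X)X\rangle$. Hence~\eqref{eq:geodesic_graph} is equivalent to
\begin{equation*}
\xi(X)X=j(Z_0)V\quad\text{for all } X=Z_0+V.
\end{equation*}

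\textbf{(\ref{lemma:nat_red_cond_item_1})$\Rightarrow$(\ref{lemma:nat_red_cond_item_2}).} Put $V=0$ and $Z_0=0$ in turn to get $\xi(Z_0)Z_0=0$ and $\xi(V)V=0$. Polarizing then gives
\begin{equation*}
\xi(Z_0)V+\xi(V)Z_0=j(Z_0)V .
\end{equation*}
Here $\xi(Z_0)V\in\got v$ and $\xi(V)Z_0\in\got n'$, because $\got h$ preserves the decomposition. Separating components yields $\xi(Z_0)|_{\got v}=j(Z_0)$. So $\tilde\xi=\xi|_{\got n'}$ works, and it is $\ad(\got h)$-equivariant because $\xi$ is and $\got h(\got n')\subseteq\got n'$.

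\textbf{(\ref{lemma:nat_red_cond_item_2})$\Rightarrow$(\ref{lemma:nat_red_cond_item_1}).} Define $\xi(Z_0+V)=\tilde\xi(Z_0)$, i.e.\ $\xi$ vanishes on $\got v$. Equivariance follows from that of $\tilde\xi$ and the invariance of $\got n'$ and $\got v$ under $\got h$. It remains to check $\tilde\xi(Z_0)Z_0=0$. By Proposition~\ref{prop:lightl_vect_skew-sym_der},
\begin{equation*}
j(\tilde\xi(Z)Z')=[\tilde\xi(Z),j(Z')]=[j(Z),j(Z')].
\end{equation*}
This is antisymmetric in $Z,Z'$, so $j(\tilde\xi(Z)Z)=0$, and injectivity of $j$ gives $\tilde\xi(Z)Z=0$.

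\textbf{(\ref{lemma:nat_red_cond_item_2})$\Rightarrow$(\ref{lemma:nat_red_cond_item_3}).} Take $\tau_Z=\tilde\xi(Z)|_{\got n'}$. It is skew because $\tilde\xi(Z)$ is skew and preserves $\got n'$. The identity $[j(Z),j(Z')]=j(\tau_Z Z')$ is exactly the displayed consequence of Proposition~\ref{prop:lightl_vect_skew-sym_der} above.

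\textbf{(\ref{lemma:nat_red_cond_item_3})$\Rightarrow$(\ref{lemma:nat_red_cond_item_2}).} This is the main step. Define $D_Z=\tau_Z\oplus j(Z)$ on $\got n'\oplus\got v$. It is skew-symmetric since the sum is orthogonal and both pieces are skew. To see it is a derivation it suffices, since $\got n'\subseteq\got z$ and $D_Z(\got n')\subseteq\got n'$, to check $D_Z[X,Y]=[D_ZX,Y]+[X,D_ZY]$ for $X,Y\in\got v$. Pairing both sides with $Z'\in\got n'$, each reduces to $-\langle[j(Z),j(Z')]X,Y\rangle$. The left side uses~\eqref{eq:def_map_j} and hypothesis (\ref{lemma:nat_red_cond_item_3}); the right side uses~\eqref{eq:def_map_j} and skew-symmetry of $j(Z)$. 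Since $\got n'$ is non-degenerate, the identity holds. Note that the degenerate-center case, where $E\in\got a_0\subseteq\got v$, is covered automatically.

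Setting $\tilde\xi(Z)=D_Z\in\got h$, it remains to prove $\ad(\got h)$-equivariance, i.e.\ $D_{LZ}=[L,D_Z]$ for every $L\in\got h$.
\begin{itemize}
\item On $\got v$ this reads $j(LZ)=[L,j(Z)]$, which is Proposition~\ref{prop:lightl_vect_skew-sym_der}.
\item On $\got n'$ we need $\tau_{LZ}=[L,\tau_Z]$. Apply $j$ and use injectivity. The left side gives
\begin{equation*}
j(\tau_{LZ}Z')=[[L,j(Z)],j(Z')].
\end{equation*}
The right side, using Proposition~\ref{prop:lightl_vect_skew-sym_der} twice, gives
\begin{equation*}
[L,[j(Z),j(Z')]]-[j(Z),[L,j(Z')]].
\end{equation*}
These agree by the Jacobi identity.
\end{itemize}
I expect this derivation-and-equivariance verification to be the only delicate point. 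The no-flat-factor hypothesis should only serve to keep the setting of Proposition~\ref{prop:not_split-off_a_flat_factor}; the argument itself relies on non-degeneracy of $\got n'$ and injectivity of $j$.
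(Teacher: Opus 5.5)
Your proof is correct and is essentially the paper's argument. It uses the same ingredients: the Geodesic Lemma with $\xi$ vanishing on $\got v$, Proposition~\ref{prop:lightl_vect_skew-sym_der}, injectivity of $j$ to get $\tilde\xi(Z)Z=0$, and the Jacobi identity to show that $\tau_Z\oplus j(Z)$ is an $\ad(\got h)$-equivariant skew-symmetric derivation. The differences are only organizational: the paper runs the cycle (1)$\Rightarrow$(2)$\Rightarrow$(3)$\Rightarrow$(1) and checks the geodesic-graph condition case by case, whereas you repackage that condition as $\xi(X)X=j(Z_0)V$, prove (1)$\Leftrightarrow$(2)$\Leftrightarrow$(3), and get (2)$\Rightarrow$(3) from Proposition~\ref{prop:lightl_vect_skew-sym_der} with $L=\tilde\xi(Z)$ rather than from equivariance of $\tilde\xi$.
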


\begin{proof}
    \noindent $\eqref{lemma:nat_red_cond_item_1} \Rightarrow \eqref{lemma:nat_red_cond_item_2}$. Suppose that $N$ is naturally reductive. By Lemma~\ref{lema:geodesicLemma}, there exists a linear, $\ad(\got h)$-equivariant map $\xi: \got n \to \got h$ such that, for every $A, B \in \got n$,
    \begin{equation}\label{eq:proof_lema_con_geo_lemma}
        0 = \langle [A + \xi(A), B], A \rangle = \langle [A,B], A \rangle + \langle \xi(A) B, A \rangle.
    \end{equation}
    Since $\xi$ is $\ad(\got h)$-equivariant, for every $L \in \got h$ and $A \in \got n$,
    \begin{equation*}
        [L, \xi(A)]_{\got{iso}(N)} = \xi([L,A]_{\got{iso}(N)}) = \xi(L(A)).
    \end{equation*}
    Let $X, Y \in \got v$. Then $[X,Y] \in \got n'$, and so $\langle [X,Y], X \rangle = 0$. Then from~\eqref{eq:proof_lema_con_geo_lemma}, taking $A = X$ and $B = Y$,
    \begin{equation*}
        \langle \xi(X) Y, X \rangle = 0.
    \end{equation*}

    Let $Z \in \got n'$. Taking $A = X + Z$ and $B = Y$ in~\eqref{eq:proof_lema_con_geo_lemma}, since $\xi(A)$ preserves the decomposition $\got n = \got n' \oplus \got v$, we obtain
    \begin{equation*}
        \begin{array}{ll}
            0 &= \langle [X + Z, Y], X + Z \rangle + \langle \xi(X+Z) Y, X + Z \rangle \\
              &= \langle [X,Y], Z \rangle + \langle \xi(X) Y, X \rangle + \langle \xi(Z) Y, X \rangle \\
              &= \langle (j(Z) - \xi(Z)) X, Y \rangle.
        \end{array}
    \end{equation*}
    Since $X, Y \in \got v$ are arbitrary,
    \begin{equation*}
        \xi(Z)\big|_{\got v} = j(Z), \quad \text{for every } Z \in \got n'.
    \end{equation*}
    It suffices to set $\tilde{\xi} = \xi|_{\got n'}$.

    \medskip

    \noindent $\eqref{lemma:nat_red_cond_item_2} \Rightarrow \eqref{lemma:nat_red_cond_item_3}$. Let $Z, Z' \in \got n'$. Since $\tilde{\xi}$ is $\ad(\got h)$-equivariant,
    \begin{equation*}
        [j(Z), j(Z')] = [\tilde{\xi}(Z), \tilde{\xi}(Z')]\big|_{\got v} = \tilde{\xi}(\tilde{\xi}(Z) Z')\big|_{\got v} = j(\tilde{\xi}(Z) Z').
    \end{equation*}
    It suffices to set $\tau_Z = \tilde{\xi}(Z)|_{\got n'}$.

    \medskip

    \noindent $\eqref{lemma:nat_red_cond_item_3} \Rightarrow \eqref{lemma:nat_red_cond_item_1}$. Let $Z \in \got n'$. Define a linear map $\tilde{\xi}(Z): \got n \to \got n$ by
    \begin{equation*}
        \tilde{\xi}(Z) A = \begin{cases}
            \tau_Z(A) & \text{if } A \in \got n', \\
            j(Z) A & \text{if } A \in \got v.
        \end{cases}
    \end{equation*}

    Let us show that $\tilde{\xi}(Z) \in \got h$. Let $A, B \in \got n$ be arbitrary. If $A \in \got n'$, then $[A,B] = 0$, $[\tilde{\xi}(Z) A, B] = 0$ and $[A, \tilde{\xi}(Z) B] = 0$. Therefore,
    \begin{equation*}
        0 = \tilde{\xi}(Z) [A,B] = [\tilde{\xi}(Z) A, B] + [A, \tilde{\xi}(Z) B].
    \end{equation*}
    If $A \in \got v$ and $B \in \got n'$, a similar argument applies. For the case $A, B \in \got v$, if $Z' \in \got n'$, then
    \begin{equation*}
        \begin{array}{ll}
            \langle \tilde{\xi}(Z) [A,B], Z' \rangle &= \langle \tau_Z([A,B]), Z' \rangle = -\langle [A,B], \tau_Z(Z') \rangle \\
            &= -\langle j(\tau_Z(Z')) A, B \rangle = -\langle [j(Z), j(Z')] A, B \rangle,
        \end{array}
    \end{equation*}
    and
    \begin{equation*}
        \begin{array}{ll}
            \langle [\tilde{\xi}(Z) A, B] + [A, \tilde{\xi}(Z) B], Z' \rangle &= \langle [j(Z) A, B], Z' \rangle - \langle [j(Z) B, A], Z' \rangle \\
            &= \langle j(Z') j(Z) A, B \rangle - \langle j(Z') j(Z) B, A \rangle \\
            &= \langle j(Z') j(Z) A, B \rangle - \langle B, j(Z) j(Z') A \rangle \\
            &= \langle (j(Z') j(Z) - j(Z) j(Z')) A, B \rangle.
        \end{array}
    \end{equation*}
    Therefore,
    \begin{equation*}
        \langle \tilde{\xi}(Z) [A,B], Z' \rangle = \langle [\tilde{\xi}(Z) A, B] + [A, \tilde{\xi}(Z) B], Z' \rangle
    \end{equation*}
    for every $Z' \in \got n'$. We conclude that $\tilde{\xi}(Z)$ is a derivation of $\got n$ for every $Z \in \got n'$.

    Since $\tilde{\xi}(Z)$ preserves the orthogonal decomposition $\got n = \got n' \oplus \got v$ and is skew-symmetric on each summand, $\tilde{\xi}(Z) \in \got{so}(\got n)$. Therefore $\tilde{\xi}(Z) \in \got h$. Furthermore, $Z \mapsto \tilde{\xi}(Z)$ is linear, since $Z \mapsto \tau_Z$ and $Z \mapsto j(Z)$ are linear.

    Define a linear map $\xi: \got n \to \got h$ by
    \begin{equation*}
        \xi(A) = \left\{
        \begin{array}{cl}
            \tilde{\xi}(Z), & \text{if } A = Z \in \got n', \\[4pt]
            0,              & \text{if } A \in \got v.
        \end{array}
        \right.
    \end{equation*}
    Let us show that $\xi$ is $\ad(\got h)$-equivariant; that is, for every $L \in \got h$ and every $A \in \got n$,
    \begin{equation*}
        [L, \xi(A)] = \xi(L(A)).
    \end{equation*}
    First, observe that if $L \in \got h$, then $L$ preserves the orthogonal decomposition $\got n = \got n' \oplus \got v$. Hence, if $A = X \in \got v$, then $\xi(L(X)) = 0$ and $[L, \xi(X)] = 0$, so the equality holds trivially.

    Assume that $A = Z \in \got n'$. Let $L \in \got h$. If $X \in \got v$, by Proposition~\ref{prop:lightl_vect_skew-sym_der},
    \begin{equation*}
        [L, \xi(Z)](X) = [L, j(Z)](X) = j(L(Z)) X = \xi(L(Z))(X).
    \end{equation*}
    Hence,
    \begin{equation*}
        [L, \xi(Z)]\big|_{\got v} = \xi(L(Z))\big|_{\got v}.
    \end{equation*}

    On the other hand, if $Z' \in \got n'$, then
    \begin{equation*}
        [L, \xi(Z)](Z') = [L, \tau_Z](Z') = L(\tau_Z(Z')) - \tau_Z(L(Z')).
    \end{equation*}
    By Proposition~\ref{prop:lightl_vect_skew-sym_der} and item~\eqref{lemma:nat_red_cond_item_3},
    \begin{align*}
        j(L(\tau_Z(Z'))) &= [L, j(\tau_Z(Z'))] = [L, [j(Z), j(Z')]] \\
                         &= [[L, j(Z)], j(Z')] + [j(Z), [L, j(Z')]] \\
                         &= [j(L(Z)), j(Z')] + [j(Z), j(L(Z'))] \\
                         &= j(\tau_{L(Z)}(Z')) + j(\tau_Z(L(Z'))).
    \end{align*}
    Therefore,
    \begin{equation*}
        j([L, \xi(Z)](Z')) = j(L(\tau_Z(Z'))) - j(\tau_Z(L(Z'))) = j(\tau_{L(Z)}(Z')).
    \end{equation*}
    Since $j$ is injective,
    \begin{equation*}
        [L, \xi(Z)](Z') = \tau_{L(Z)}(Z') = \xi(L(Z))(Z').
    \end{equation*}
    We conclude that $\xi$ is $\ad(\got h)$-equivariant.

    Finally, we show that $\xi: \got n \to \got h$ is a geodesic graph for $\got n$. Let $A = X \in \got v$. Then, for any $B \in \got n$,
    \begin{equation*}
        [X + \xi(X), B] = [X, B] \in \got n',
    \end{equation*}
    and since $\got v$ is orthogonal to $\got n'$,
    \begin{equation*}
        \langle [X + \xi(X), B], X \rangle = 0.
    \end{equation*}

    Now suppose $A = Z \in \got n'$. If $B = Z' \in \got n'$, then
    \begin{equation*}
        \langle [Z + \xi(Z), Z'], Z \rangle = \langle [Z,Z'], Z \rangle + \langle \tau_Z(Z'), Z \rangle = -\langle Z', \tau_Z(Z) \rangle.
    \end{equation*}
    Observe that
    \begin{equation*}
        j(\tau_Z(Z)) = [j(Z), j(Z)] = 0,
    \end{equation*}
    and since $j$ is injective, $\tau_Z(Z) = 0$. Hence,
    \begin{equation*}
        \langle [Z + \xi(Z), Z'], Z \rangle = 0.
    \end{equation*}

    Finally, if $A = Z \in \got n'$ and $B = X \in \got v$,
    \begin{equation*}
        \langle [Z + \xi(Z), X], Z \rangle = \langle [Z, X], Z \rangle + \langle j(Z) X, Z \rangle = 0,
    \end{equation*}
    which completes the proof.
\end{proof}

Let $N$ be a simply connected $2$-step nilpotent Lie group with a left-invariant metric that is naturally reductive with respect to $\Iso^{\aut}(N)$, such that $\got n'$ is non-degenerate. For each $Z \in \got n'$, let $\tau_Z \in \got{so}(\got n')$ be the map given by item~\eqref{lemma:nat_red_cond_item_3} of Lemma~\ref{lemma:nat_red_cond}. One can then define a Lie bracket $[\cdot,\cdot]_0$ on $\got n'$ by
\begin{equation}\label{eq:corchete_en_n'}
    [Z, Z']_0 = \tau_Z(Z') \quad \text{for all } Z, Z' \in \got n'.
\end{equation}
Furthermore, if $A, B, C \in \got n'$, then
\begin{equation*}
    \langle [A,B]_0, C \rangle = \langle \tau_A(B), C \rangle = -\langle B, \tau_A(C) \rangle = -\langle B, [A,C]_0 \rangle.
\end{equation*}
Thus $\langle \cdot, \cdot \rangle|_{\got n'}$ is an $\ad$-invariant metric on $(\got n', [\cdot,\cdot]_0)$.

From Proposition~\ref{prop:not_split-off_a_flat_factor} we obtain:

\begin{theorem}\label{teo:summarize_nat_red}
    Let $N$ be a simply connected $2$-step nilpotent Lie group with Lie algebra $\got n$, endowed with a left-invariant Lorentzian metric such that $\got n'$ is non-degenerate. Assume that $N$ is naturally reductive with respect to $\Iso^{\aut}(N)$ and that $N$ does not split off a flat factor. Let $j: \got n' \to \got{so}(\got v)$ be the linear map defined in~\eqref{eq:def_map_j}, and let $[\cdot,\cdot]_0$ be the Lie bracket on $\got n'$ defined in~\eqref{eq:corchete_en_n'}. Then:
    \begin{enumerate}
        \item $\got n'$ admits a Lie bracket $[\cdot,\cdot]_0$ such that $(\got n', [\cdot,\cdot]_0)$ is a Lie algebra with an $\ad$-invariant metric.
        \item $j: (\got n', [\cdot,\cdot]_0) \to \got{so}(\got v)$ is a faithful representation such that:
        \begin{enumerate}
            \item if $\got z$ is non-degenerate, then $\got z = \got n'$ and $j$ has no trivial subrepresentations, i.e.\ $\bigcap_{Z \in \got n'} \ker j(Z)$ is trivial;
            \item if $\got z$ is degenerate, then $\got z = \got n' \oplus \R E$, where $E$ is a lightlike vector, and $\bigcap_{Z \in \got n'} \ker j(Z) = \R E$ is a one-dimensional degenerate subspace of $\got v$.
        \end{enumerate}
    \end{enumerate}
\end{theorem}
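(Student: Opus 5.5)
The plan is to assemble the statement from Lemma~\ref{lemma:nat_red_cond}, the bracket~\eqref{eq:corchete_en_n'}, and Proposition~\ref{prop:not_split-off_a_flat_factor}.

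For item (1), natural reductivity with respect to $\Iso^{\aut}(N)$ and the absence of a flat factor let us apply Lemma~\ref{lemma:nat_red_cond}. Its item~\eqref{lemma:nat_red_cond_item_3} gives $\tau:\got n'\to\got{so}(\got n')$ with $[j(Z),j(Z')]=j(\tau_Z(Z'))$. We set $[Z,Z']_0=\tau_Z(Z')$, which is bilinear.
\begin{itemize}
\item Skew-symmetry: $j([Z,Z']_0)=[j(Z),j(Z')]=-[j(Z'),j(Z)]=-j([Z',Z]_0)$, and $j$ is injective.
\item Jacobi identity: applying $j$ to the cyclic sum of $[[Z_1,Z_2]_0,Z_3]_0$ gives the cyclic sum of $[[j(Z_1),j(Z_2)],j(Z_3)]$ in $\got{so}(\got v)$. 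This sum vanishes by the Jacobi identity there, and injectivity of $j$ again finishes.
\item Invariance of the metric: this is the computation already displayed after~\eqref{eq:corchete_en_n'}, using $\tau_Z\in\got{so}(\got n')$.
\end{itemize}
The metric on $\got n'$ is non-degenerate by hypothesis, so it is an $\ad$-invariant metric.

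For item (2), the identity $j([Z,Z']_0)=[j(Z),j(Z')]$ says precisely that $j$ is a Lie algebra homomorphism into $\got{so}(\got v)$. It is faithful because $j$ is injective, as observed right after~\eqref{eq:def_map_j}. By Lemma~\ref{lemma:ideala0}, $\bigcap_Z\ker j(Z)=\got a_0$ and $\got z=\got n'\oplus\got a_0$.
\begin{itemize}
\item If $\got z$ is non-degenerate, Proposition~\ref{prop:not_split-off_a_flat_factor}(1) gives $\got a_0=0$, hence $\got z=\got n'$ and there are no trivial subrepresentations.
\item If $\got z$ is degenerate, Proposition~\ref{prop:not_split-off_a_flat_factor}(2) gives $\got z=\got n'\oplus\R E$ with $E$ lightlike.
\end{itemize}

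The one point needing care is in the degenerate case: identifying $\got a_0$ with $\R E$ requires that $E$ be taken in $\got v$. In the proof of Proposition~\ref{prop:not_split-off_a_flat_factor}, $E$ was chosen inside $\got a_0\subseteq\got v$ and $\got m_0$ was shown to vanish, so $\got a_0=\R E$. This is a one-dimensional subspace spanned by a lightlike vector, hence degenerate. With this, the statement follows.
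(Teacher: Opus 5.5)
Your proposal is correct and follows the paper's own route. The bracket $[\cdot,\cdot]_0$ comes from $\tau$ in Lemma~\ref{lemma:nat_red_cond}\eqref{lemma:nat_red_cond_item_3}, with the $\ad$-invariance computation displayed before the theorem, and the dichotomy for $\got z$ comes from Lemma~\ref{lemma:ideala0} together with Proposition~\ref{prop:not_split-off_a_flat_factor}. You also spell out two points the paper leaves implicit: skew-symmetry and the Jacobi identity via injectivity of $j$, and the identification $\got a_0=\R E$ read off from the proof of that proposition.
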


\section{Naturally reductive Lorentzian \texorpdfstring{$2$}{2}-step nilpotent Lie groups via representations}\label{sec:4_Naturally_reductive_Lorentzian_2-step_nilpotent_Lie_groups_via_representations}

In this section we generalize the construction of a $2$-step nilpotent Lie algebra $\got n$ from a representation $\pi: \got g \to \End(\got v)$, where $\got g$ is a Lie algebra with an $\ad$-invariant metric and $\got v$ is a metric vector space, in such a way that the associated $2$-step nilpotent simply connected Lie group $N$ is naturally reductive with respect to the presentation group $\Iso^{\aut}(N)$ and does not split off a flat factor (cf.~\cite{Lauret1999, Ovando2013}). Since, by Theorem~\ref{teo:summarize_nat_red}, the structure of the Lie algebra $\got n$ depends on whether $\got z$ is degenerate or non-degenerate, we introduce two types of constructions to cover both cases.

The following definition was introduced in~\cite{Lauret1999} for the Riemannian case and generalized by Ovando in the pseudo-Riemannian setting in~\cite{Ovando2013}.

\begin{definition}
    A \emph{data set} is a triple $(\got g, \got v, \pi)$ such that:
    \begin{enumerate}
        \item $\got g$ is a Lie algebra endowed with an $\ad$-invariant metric $\langle \cdot, \cdot \rangle_{\got g}$ (i.e., $\ad_Z \in \got{so}(\got g, \langle \cdot, \cdot \rangle_{\got g})$ for each $Z \in \got g$);
        \item $\got v$ is a real vector space;
        \item $\pi: \got g \to \End(\got v)$ is a real faithful representation;
        \item $\got v$ is endowed with a $\pi(\got g)$-invariant metric $\langle \cdot, \cdot \rangle_{\got v}$ (i.e., $\pi: \got g \to \got{so}(\got v)$).
    \end{enumerate}
\end{definition}

Given a data set $(\got g, \got v, \pi)$, define
\begin{equation*}
    \got n = \got g \oplus \got v,
\end{equation*}
and equip $\got n$ with the metric
\begin{equation}\label{eq:metricNR}
    \langle \cdot, \cdot \rangle\big|_{\got g} = \langle \cdot, \cdot \rangle_{\got g}, \quad \langle \cdot, \cdot \rangle\big|_{\got v} = \langle \cdot, \cdot \rangle_{\got v}, \quad \langle \got g, \got v \rangle = 0.
\end{equation}
One can define a Lie bracket on $\got n$ by
\begin{equation}\label{eq:bracketNR}
    \left\{
    \begin{array}{l}
        [\got g, \got n] = \{0\}, \quad [\got v, \got v] \subset \got g, \\[2pt]
        \langle [X,Y], Z \rangle = \langle \pi(Z) X, Y \rangle \quad \text{for all } Z \in \got g, \, X, Y \in \got v.
    \end{array}
    \right.
\end{equation}
Then $\got n$ is a $2$-step nilpotent metric Lie algebra, which we denote by $\got n(\got g, \got v, \pi)$. We will denote by $N(\got g, \got v, \pi)$ the simply connected $2$-step nilpotent Lie group whose Lie algebra is $\got n(\got g, \got v, \pi)$.

\begin{definition}\label{defi:data_set_degenerate}
    Let $(\got g, \got v, \pi)$ be a data set, and let $\got n = \got g \oplus \got v$ be the Lie algebra defined by~\eqref{eq:bracketNR} with the metric $\langle \cdot, \cdot \rangle$ defined by~\eqref{eq:metricNR}. Assume that $\langle \cdot, \cdot \rangle$ is Lorentzian (i.e., it has signature $1$). Then:
    \begin{itemize}
        \item if $\pi: \got g \to \got{so}(\got v)$ has no trivial subrepresentations (i.e., $\bigcap_{Z \in \got g} \ker \pi(Z) = \{0\}$), we say that $(\got g, \got v, \pi)$ is a \emph{Lorentzian data set};
        \item if $\bigcap_{Z \in \got g} \ker \pi(Z) = \R E$ for some $E \in \got v$ with $E \neq 0$ and $\langle E, E \rangle = 0$, then $(\got g, \got v, \pi)$ is called a \emph{degenerate Lorentzian data set}.
    \end{itemize}
\end{definition}

\begin{observation}
    The definition of a Lorentzian data set coincides with the definition of a data set given in~\cite{Ovando2013} when the metric on $\got n$ is Lorentzian.
\end{observation}

\begin{proposition}\label{prop:centro_no_deg_dataset}
    Let $(\got g, \got v, \pi)$ be a Lorentzian data set, and let $\got n = \got n(\got g, \got v, \pi)$ be the $2$-step nilpotent metric Lie algebra associated with $(\got g, \got v, \pi)$. Then:
    \begin{enumerate}
        \item\label{item1propdataset} the center $\got z$ of $\got n$ coincides with $\got g$ and is therefore non-degenerate;
        \item\label{item2propdataset} the commutator $\got n'$ is non-degenerate, and $\got n' = \got z$.
    \end{enumerate}
\end{proposition}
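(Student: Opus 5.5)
We compute the center directly from the bracket~\eqref{eq:bracketNR}, and then show $\got n'=\got g$ using non-degeneracy of $\langle\cdot,\cdot\rangle_{\got g}$ and faithfulness of $\pi$.

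\textbf{Step 1: the center.} By definition $[\got g,\got n]=0$, so $\got g\subseteq\got z$. Conversely, let $A=Z_A+X_A\in\got z$ with $Z_A\in\got g$ and $X_A\in\got v$. For every $Y\in\got v$ we have $0=[A,Y]=[X_A,Y]$. Hence, for every $Z\in\got g$,
\begin{equation*}
0=\langle [X_A,Y],Z\rangle=\langle \pi(Z)X_A,Y\rangle .
\end{equation*}
The metric on $\got v$ is non-degenerate, because $\got v$ is an orthogonal summand of the non-degenerate (Lorentzian) metric on $\got n$. Therefore $\pi(Z)X_A=0$ for all $Z\in\got g$, i.e.\ $X_A\in\bigcap_{Z\in\got g}\ker\pi(Z)$. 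This intersection is $\{0\}$ by the Lorentzian data set hypothesis, so $X_A=0$ and $\got z=\got g$. The subspace $\got g$ is non-degenerate since $\got n=\got g\oplus\got v$ is an orthogonal sum with $\langle\cdot,\cdot\rangle$ non-degenerate; equivalently, $\langle\cdot,\cdot\rangle_{\got g}$ is a metric.

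\textbf{Step 2: the commutator.} Since $[\got g,\got n]=0$ and $[\got v,\got v]\subseteq\got g$, we get $\got n'=[\got v,\got v]\subseteq\got g$. For the reverse inclusion, take $Z\in\got g$ orthogonal to $[\got v,\got v]$. Then $\langle\pi(Z)X,Y\rangle=0$ for all $X,Y\in\got v$, so $\pi(Z)=0$ by non-degeneracy of $\got v$, and $Z=0$ by faithfulness of $\pi$. Thus $[\got v,\got v]^{\perp}\cap\got g=\{0\}$ inside the non-degenerate space $\got g$, and hence $[\got v,\got v]=\got g$ by dimension counting: the orthogonal complement of a subspace $W$ in a non-degenerate space has dimension $\dim\got g-\dim W$. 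So $\got n'=\got g=\got z$, which is non-degenerate.

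The only point needing care is the use of non-degeneracy, both of $\got v$ and of $\got g$, in the indefinite setting. Both follow from the orthogonal splitting~\eqref{eq:metricNR} of a non-degenerate metric, and the dimension argument above replaces any appeal to positivity. No step is a genuine obstacle.
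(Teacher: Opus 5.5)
Your proof is correct and follows essentially the paper's argument. The center is identified using the absence of trivial subrepresentations, and $\got n'=\got g$ follows because any $Z\in\got g$ orthogonal to $[\got v,\got v]$ satisfies $\pi(Z)=0$ and hence vanishes by faithfulness. The only difference is ordering: the paper shows directly that the radical of $\got n'$ is trivial and then gets $\got n'=\got z$ by the same reasoning, whereas you first obtain $\got n'=\got g$ by a dimension count in the non-degenerate space $\got g$ and inherit non-degeneracy from it, and you also make explicit the splitting $A=Z_A+X_A$ that the paper leaves implicit.
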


\begin{proof}
    It is immediate that the center $\got z$ of $\got n$ contains $\got g$. From~\eqref{eq:metricNR} and~\eqref{eq:bracketNR}, if $X \in \got v$ belongs to $\got z$, then $X \in \bigcap_{Z \in \got g} \ker \pi(Z) = \{0\}$, and hence $X = 0$. Therefore $\got z = \got g$, and the center of $\got n(\got g, \got v, \pi)$ is non-degenerate. This proves~\eqref{item1propdataset}.

    Suppose now that there exists $Z \in \got n'$ such that $\langle Z', Z \rangle = 0$ for all $Z' \in \got n'$. In particular, $\langle [X,Y], Z \rangle = 0$ for all $X, Y \in \got v$. By~\eqref{eq:bracketNR}, $\langle \pi(Z) X, Y \rangle = 0$ for all $X, Y \in \got v$, and since the metric on $\got v$ is non-degenerate, $\pi(Z) X = 0$ for all $X \in \got v$. Hence $\pi(Z) = 0$, which is impossible because $\pi$ is faithful. So $\got n'$ is non-degenerate. By the same argument, $\got n' = \got z$; otherwise there would exist $Z \in \got z$ with $Z \perp \got n'$, and we would get $\pi(Z) = 0$.
\end{proof}

\begin{proposition}\label{prop:centro_deg_dataset}
    Let $(\got g, \got v, \pi)$ be a degenerate Lorentzian data set, and let $\got n = \got n(\got g, \got v, \pi)$ be the $2$-step nilpotent metric Lie algebra associated with $(\got g, \got v, \pi)$. Then:
    \begin{enumerate}
        \item\label{item1propdataset_deg} the metric $\langle \cdot, \cdot \rangle_{\got g}$ on $\got g$ is Riemannian, and the metric $\langle \cdot, \cdot \rangle_{\got v}$ on $\got v$ is Lorentzian;
        \item\label{item2propdataset_deg} the commutator $\got n'$ is non-degenerate, and $\got n' = \got g$;
        \item\label{item3propdataset_deg} the center $\got z$ is degenerate, and $\got z = \got n' \oplus \R E$.
    \end{enumerate}
\end{proposition}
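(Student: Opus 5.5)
Let $\got a_0 = \bigcap_{Z \in \got g} \ker \pi(Z) = \R E$. The plan is to prove item~\eqref{item1propdataset_deg} from the signature count, item~\eqref{item2propdataset_deg} by the argument of Proposition~\ref{prop:centro_no_deg_dataset}, and item~\eqref{item3propdataset_deg} by computing the center directly. For item~\eqref{item1propdataset_deg}, the total metric on $\got n = \got g \oplus \got v$ is Lorentzian and the decomposition is orthogonal. So the index of $\langle\cdot,\cdot\rangle$ is the sum of the indices of $\langle\cdot,\cdot\rangle_{\got g}$ and $\langle\cdot,\cdot\rangle_{\got v}$. Both forms are non-degenerate (they are metrics), so $\got v$ and $\got g$ each have index at most $1$, with total index $1$. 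Since $E \in \got v$ is a nonzero lightlike vector, $\langle\cdot,\cdot\rangle_{\got v}$ is not definite. It therefore has index $1$, which forces $\got g$ to have index $0$, i.e.\ to be Riemannian.

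For item~\eqref{item2propdataset_deg}, first note that $\got n' = [\got v,\got v] \subseteq \got g$. Suppose $Z \in \got g$ satisfies $Z \perp \got n'$. Then $\langle \pi(Z)X, Y\rangle = \langle [X,Y], Z\rangle = 0$ for all $X,Y\in\got v$. Non-degeneracy of $\got v$ gives $\pi(Z)=0$, and faithfulness gives $Z=0$. Hence the orthogonal complement of $\got n'$ inside $\got g$ is zero. Since $\got g$ is non-degenerate (indeed Riemannian), this forces $\got n' = \got g$. Non-degeneracy of $\got n'$ is then automatic, as $\got g$ is positive definite.

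For item~\eqref{item3propdataset_deg}, note $\got g \subseteq \got z$. Take $A = Z_A + X_A \in \got z$ with $Z_A \in \got g$ and $X_A\in\got v$. Then for all $Y\in\got v$ and $Z\in\got g$ we get $0 = \langle [X_A,Y], Z\rangle = \langle \pi(Z)X_A, Y\rangle$. Hence $X_A \in \got a_0 = \R E$. Conversely, $E$ is central: $[E,Y]$ pairs to zero with every $Z \in \got g$, and $\got g$ is non-degenerate, so $[E,Y]=0$. Thus $\got z = \got g \oplus \R E = \got n' \oplus \R E$. Finally, $\got z$ is degenerate because $E \perp \got g$ (by the orthogonality in~\eqref{eq:metricNR}) and $\langle E,E\rangle = 0$, so $E$ lies in the radical of $\got z$.

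The only step needing care is the signature argument in item~\eqref{item1propdataset_deg}. There one must use that the index is additive over orthogonal non-degenerate summands, and that a non-degenerate form admitting a nonzero null vector has positive index. All remaining steps are direct adaptations of Proposition~\ref{prop:centro_no_deg_dataset}.
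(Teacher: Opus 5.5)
Your proof is correct and follows essentially the same route as the paper: item (1) comes from the signature together with the lightlike vector $E \in \got v$. Item (2) comes from showing that any $Z \in \got g$ orthogonal to $\got n'$ has $\pi(Z)=0$. Item (3) comes from forcing the $\got v$-component of a central element into $\bigcap_{Z\in\got g}\ker\pi(Z)=\R E$. You additionally make explicit the index count, the centrality of $E$, and the degeneracy of $\got z$, which the paper leaves implicit.
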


\begin{proof}
    Item~\eqref{item1propdataset_deg} follows immediately from Definition~\ref{defi:data_set_degenerate}.

    From~\eqref{eq:bracketNR}, the commutator subalgebra $\got n'$ is contained in $\got g$. Suppose $\got n' \subsetneq \got g$. Then there exists $Z \in \got g$ orthogonal to $\got n'$. Thus, for all $X, Y \in \got v$,
    \begin{equation*}
        0 = \langle [X,Y], Z \rangle = \langle \pi(Z) X, Y \rangle.
    \end{equation*}
    Hence $\pi(Z) = 0$ and so $Z = 0$. Therefore $\got g = \got n'$, proving~\eqref{item2propdataset_deg}.

    Since $E$ is central, $\got n' \oplus \R E \subset \got z$. Let $A \in \got z$, and write $A = X + Z$ with $X \in \got v$ and $Z \in \got g = \got n'$. By~\eqref{eq:bracketNR}, $Z \in \got z$ and hence $X \in \got z$ as well. Thus, for every $Y \in \got v$ and $Z' \in \got g$,
    \begin{equation*}
        0 = \langle [X,Y], Z' \rangle = \langle \pi(Z') X, Y \rangle.
    \end{equation*}
    Therefore $X \in \bigcap_{Z' \in \got g} \ker \pi(Z') = \R E$, and consequently $A \in \got n' \oplus \R E$.
\end{proof}

Let $N$ be a simply connected $2$-step nilpotent Lie group endowed with a left-invariant Lorentzian metric such that $\got n'$ is non-degenerate, and assume that $N$ is naturally reductive with respect to $\Iso^{\aut}(N)$. From Theorem~\ref{teo:summarize_nat_red}, if $N$ does not split off a flat factor, then $N$ gives rise to a Lorentzian or a degenerate Lorentzian data set, with $\got g = \got n'$ (with an appropriate Lie bracket) and $\pi = j$.

We now show that the converse also holds. This result was established by Ovando~\cite{Ovando2013} in the Lorentzian setting under the hypothesis that the center $\got z$ is non-degenerate and $j$ is injective. We will prove that the statement holds even when $\got z$ is degenerate.

\begin{proposition}\label{prop:geodesic_graph_dataset}
    Let $(\got g, \got v, \pi)$ be a Lorentzian or a degenerate Lorentzian data set, and let $\got n = \got n(\got g, \got v, \pi)$. Then $N = N(\got g, \got v, \pi)$, endowed with the left-invariant metric induced by~\eqref{eq:metricNR}, is naturally reductive with respect to the Lie group $\Iso^{\aut}(N)$, and $N$ does not split off a flat factor. Moreover, the linear map $\xi: \got n \to \got h^{\aut}$ defined on the decomposition $\got n = \got g \oplus \got v$ by
    \begin{equation*}
        \xi(Z + X) = (\ad^{\got g}_Z, \, \pi(Z)), \quad \text{for } Z \in \got g, \, X \in \got v,
    \end{equation*}
    is a linear, $\ad(\got h)$-equivariant geodesic graph for $\got n$.
\end{proposition}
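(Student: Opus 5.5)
The plan is to check everything directly from the definitions and then appeal to the Geodesic Lemma (Lemma~\ref{lema:geodesicLemma}). Lemma~\ref{lemma:nat_red_cond} is not usable as a shortcut here, since it assumes that $N$ does not split off a flat factor, and that is part of what we must prove.

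\textbf{Step 1: no flat factor.} By Propositions~\ref{prop:centro_no_deg_dataset} and~\ref{prop:centro_deg_dataset}, $\got n'=\got g$ is non-degenerate. The decomposition $\got n=\got g\oplus\got v$ is the one in~\eqref{eq:descortn}, and comparing~\eqref{eq:def_map_j} with~\eqref{eq:bracketNR} gives $j=\pi$. Moreover, either $\got z=\got n'$, or $\got z$ is degenerate and $\got z=\got n'\oplus\R E$. Remark~\ref{rem:flat_factor} then shows that $N$ does not split off a flat factor.

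\textbf{Step 2: $\xi(Z)\in\got h^{\aut}$.} For $Z\in\got g$ write $D_Z=(\ad^{\got g}_Z,\pi(Z))$. This map preserves the orthogonal decomposition and is skew on each summand: on $\got g$ by $\ad$-invariance of $\langle\cdot,\cdot\rangle_{\got g}$, and on $\got v$ because $\pi(\got g)\subset\got{so}(\got v)$. Hence $D_Z\in\got{so}(\got n)$. The derivation identity is trivial whenever a central argument appears. For $X,Y\in\got v$ and $W\in\got g$ we pair with $W$ and compute
\begin{equation*}
\langle \ad_Z[X,Y],W\rangle=-\langle [X,Y],[Z,W]_{\got g}\rangle=-\langle \pi([Z,W])X,Y\rangle.
\end{equation*}
On the other side,
\begin{equation*}
\langle [\pi(Z)X,Y]+[X,\pi(Z)Y],W\rangle=\langle (\pi(W)\pi(Z)-\pi(Z)\pi(W))X,Y\rangle=-\langle \pi([Z,W])X,Y\rangle,
\end{equation*}
where the last equality uses that $\pi$ is a representation. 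This is exactly the computation in the proof of Lemma~\ref{lemma:nat_red_cond} with $\tau_Z=\ad_Z$.

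\textbf{Step 3: equivariance.} Take the reductive complement $\got m=\got n$ in $\got n\rtimes\got h$; it is $\ad(\got h)$-invariant by~\eqref{eq:corcheteiso}. For $L\in\got h$, any skew derivation preserves $\got n'=\got g$ and hence also $\got v=\got g^\perp$. So $\xi(L X)=0=[L,\xi(X)]$ for $X\in\got v$. For $Z\in\got g$, Proposition~\ref{prop:lightl_vect_skew-sym_der} gives $[L,\pi(Z)]=\pi(LZ)$ on $\got v$. On $\got g$ we need
\begin{equation*}
[L|_{\got g},\ad_Z]=\ad_{LZ},
\end{equation*}
that is, $L|_{\got g}$ is a derivation of $(\got g,[\cdot,\cdot]_{\got g})$. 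To prove this, use faithfulness: $\pi(L[Z,W])=[L,\pi([Z,W])]=[L,[\pi Z,\pi W]]$, and expanding by Jacobi gives $\pi([LZ,W]+[Z,LW])$. Injectivity of $\pi$ then finishes this step. This argument mirrors the injectivity step in the proof of Lemma~\ref{lemma:nat_red_cond}.

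\textbf{Step 4: geodesic graph condition.} We verify $\langle [A+\xi(A),B],A\rangle=0$ for $A=Z+X$ and $B=W+Y$. Since $[A,B]=[X,Y]\in\got g$ and $\xi(A)B=[Z,W]+\pi(Z)Y$, the left side equals
\begin{equation*}
\langle [X,Y],Z\rangle+\langle [Z,W],Z\rangle+\langle \pi(Z)Y,X\rangle.
\end{equation*}
The middle term vanishes by $\ad$-invariance. The outer terms are $\langle\pi(Z)X,Y\rangle+\langle\pi(Z)Y,X\rangle=0$ by skewness. The Geodesic Lemma therefore applies and $N$ is naturally reductive.

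I expect Step~3 to be the main point needing care: one has to see that an arbitrary skew derivation restricts to a derivation of the abstract bracket on $\got g$. Faithfulness of $\pi$, combined with Proposition~\ref{prop:lightl_vect_skew-sym_der}, is what makes this work. The rest is routine.
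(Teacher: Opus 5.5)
Your proof is correct and is essentially the paper's: the paper rules out a flat factor exactly as in your Step~1, then applies Lemma~\ref{lemma:nat_red_cond} with $j=\pi$ and $\tau_Z=\ad_Z$. The proof of (3)$\Rightarrow$(1) in that lemma constructs precisely this $\xi$ and carries out the computations of your Steps~2--4, so those steps simply inline it. Your remark that the lemma cannot be used is mistaken, since your Step~1 supplies its no-flat-factor hypothesis beforehand. On the other hand, your Step~4 treats mixed $A=Z+X$ rather than only $A\in\got g$ and $A\in\got v$ separately, which is slightly more careful than the lemma's case check.
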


\begin{proof}
    By Propositions~\ref{prop:centro_no_deg_dataset},~\ref{prop:centro_deg_dataset} and Remark~\ref{rem:flat_factor}, the Lie group $N$ does not split off a flat factor. Since $\got n' = \got g$, the definition of $\pi$ immediately gives that $j: \got n' \to \got{so}(\got v)$ coincides with $\pi$. Setting $\tau_Z(Z') = [Z, Z']_{\got g}$, and using that $\pi$ is a representation, we obtain
    \begin{equation*}
        [j(Z), j(Z')] = [\pi(Z), \pi(Z')] = \pi([Z,Z']_{\got g}) = j(\tau_Z(Z')).
    \end{equation*}
    By Lemma~\ref{lemma:nat_red_cond}, $N$ is naturally reductive.

    Note that $\xi(Z + X) = \xi(Z)$ for every $Z \in \got g$ and $X \in \got v$. Since the maps $\ad^{\got g}$ and $\pi$ are linear, $\xi$ is linear. By a similar argument to the one used in the proof of Lemma~\ref{lemma:nat_red_cond} for the map $\tilde{\xi}$, it follows that $\xi$ is $\ad(\got h)$-equivariant. Therefore $\xi$ is an $\ad(\got h)$-equivariant geodesic graph for $\got n$.
\end{proof}

\begin{proposition}\label{prop:gcompact}
    Let $(\got g, \got v, \pi)$ be a Lorentzian or a degenerate Lorentzian data set. Then $\got g$ is a compact Lie algebra. Hence $\got g = \got g' \oplus \got c$, where $\got c$ is the center of $\got g$ and $\got g' = [\got g, \got g]$ is semisimple.
\end{proposition}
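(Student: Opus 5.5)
The plan is to split according to where the unique negative direction of the Lorentzian metric \eqref{eq:metricNR} on $\got n = \got g \oplus \got v$ lies. Since $\got g \perp \got v$ and both restrictions are non-degenerate, the index of $\langle \cdot,\cdot\rangle$ is the sum of the indices of $\langle\cdot,\cdot\rangle_{\got g}$ and $\langle\cdot,\cdot\rangle_{\got v}$. As this sum equals $1$, exactly one of the two metrics is Riemannian and the other is Lorentzian. In the degenerate case, Proposition~\ref{prop:centro_deg_dataset} already tells us that $\got g$ is Riemannian. So there are only two cases: either $\got v$ is Riemannian, or $\got g$ is Riemannian.

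\emph{Case $\got v$ Riemannian.} Here $\pi:\got g\to\got{so}(\got v)$ is faithful and $\got{so}(\got v)\simeq\got{so}(n)$ for a positive definite metric. Hence $\got g$ is isomorphic to the subalgebra $\pi(\got g)$ of the compact Lie algebra $\got{so}(n)$. A subalgebra of a compact Lie algebra is compact: the closure of the corresponding connected subgroup of $\SO(n)$ is a compact group, and $\pi(\got g)$ inherits the $\Ad$-invariant positive definite form $-\operatorname{tr}(AB)$. So $\got g$ is compact.

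\emph{Case $\got g$ Riemannian.} Here $\langle\cdot,\cdot\rangle_{\got g}$ is a positive definite $\ad$-invariant inner product, and I would use the standard characterization of compact Lie algebras by such an inner product. I would make it self-contained as follows.
\begin{enumerate}
\item The orthogonal complement of an ideal is an ideal, by $\ad$-invariance. Hence $\got g=\got c\oplus\got c^\perp$ with $\got c$ the center, and $\got c^\perp$ is an ideal.
\item The ideal $\got c^\perp$ has trivial center, and decomposing it repeatedly into orthogonal minimal ideals shows it is a sum of simple ideals. It is therefore semisimple and equal to $[\got g,\got g]=\got g'$.
\item Each $\ad_Z$ is skew-symmetric for a positive definite form, so it is diagonalizable over $\mathbb C$ with purely imaginary eigenvalues. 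Thus $B(Z,Z)=\operatorname{tr}(\ad_Z^2)\le 0$, with equality only if $\ad_Z=0$.
\item Consequently the Killing form is negative definite on $\got g'$, and $\got g=\got g'\oplus\got c$ is compact.
\end{enumerate}

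In the first case the same decomposition $\got g=\got g'\oplus\got c$ follows from compactness: a compact Lie algebra carries a positive definite $\ad$-invariant inner product, and then the argument of the second case applies verbatim. I do not expect a real obstacle here. The only points needing care are the signature bookkeeping that forces one factor to be Riemannian, and quoting (or briefly proving, as above) that faithful representations into $\got{so}(n)$ and positive definite invariant forms both yield compactness.
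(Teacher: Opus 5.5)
Your proof is correct and follows the paper's argument. Both split according to which of $\got g$, $\got v$ carries the negative direction of the Lorentzian metric. When $\got g$ is Riemannian, both use its positive definite $\ad$-invariant metric; when $\got v$ is Riemannian, both use the faithful embedding $\got g \simeq \pi(\got g) \subset \got{so}(\got v)$ into a compact Lie algebra. The only difference is that you prove from scratch the standard characterization of compact Lie algebras and the decomposition $\got g = \got g' \oplus \got c$, which the paper takes as known.
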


\begin{proof}
    If $(\got g, \got v, \pi)$ is a degenerate Lorentzian data set, the Lie algebra $\got g$ is Riemannian. If $(\got g, \got v, \pi)$ is a Lorentzian data set, $\got g$ is non-degenerate, and hence either Riemannian or Lorentzian. If $\got g$ is Riemannian and $\got v$ is Lorentzian, then $\got g$ is compact, since the metric on $\got g$ is Riemannian and $\ad$-invariant. If $\got g$ is Lorentzian, then since the representation $\pi$ is faithful, $\got g$ is isomorphic to $\pi(\got g) \subset \got{so}(\got v)$. So $\got g$ is isomorphic to a subalgebra of a compact Lie algebra and hence is compact.
\end{proof}

\section{Degenerate data sets}\label{sec:5_Degenerate_data_set}

In this section we show that any Lie algebra $\got n = \got n(\got g, \got v, \pi)$, where $(\got g, \got v, \pi)$ is a degenerate Lorentzian data set, is a central extension of a semidirect product whose underlying factor is the Lie algebra $\got m_0$ of a Riemannian nilpotent naturally reductive Lie group.

\begin{theorem}\label{teo:central_extension_semidirect}
    Let $N$ be a simply connected nilpotent Lie group endowed with a left-invariant Lorentzian metric such that $N$ is naturally reductive with respect to $\Iso^{\aut}(N)$, and suppose that $N$ does not split off a flat factor. Let $\got n$ be the Lie algebra of $N$, and assume that $\got n'$ is non-degenerate, the center $\got z$ of $\got n$ is degenerate, and $\got z = \got n' \oplus \R E$ with $\langle E, E \rangle = 0$. Let $F \in \got n$ be a lightlike vector such that $\langle E, F \rangle = 1$. Then
    \begin{equation*}
        \got n \simeq (\got m_0 \rtimes \R F) \oplus \R E,
    \end{equation*}
    where $\got m_0$ is the Lie algebra of a nilpotent Lie group endowed with a left-invariant Riemannian metric that is naturally reductive.
\end{theorem}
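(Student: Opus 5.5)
By Theorem~\ref{teo:summarize_nat_red} and Proposition~\ref{prop:centro_deg_dataset}, we may assume $\got n = \got n(\got g, \got v, \pi)$ comes from a degenerate Lorentzian data set. Here $\got g = \got n'$ is Riemannian, $\got v$ is Lorentzian, and $\bigcap_{Z}\ker \pi(Z) = \R E$ with $E \in \got v$ lightlike. Since $\got z \subseteq \got n' \oplus \got v$ and $\got n'$ is Riemannian, we may take $F \in \got v$: choose $F \in \got v$ lightlike with $\langle E, F\rangle = 1$. If $F$ is given elsewhere in $\got n$, project it to $\got v$ and correct by a multiple of $E$. We then set $\got w = \{E,F\}^{\perp} \cap \got v$, which is Riemannian, so that $\got v = \got w \oplus \R E \oplus \R F$. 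Finally put $\got m_0 = \got g \oplus \got w$. The plan is to show that $\got m_0$ is an ideal complementary to $\R F \oplus \R E$, that $E$ is central, and that $\got m_0$ with the restricted metric is a Riemannian naturally reductive $2$-step nilpotent algebra coming from a Riemannian data set.

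The first step is to describe how $\pi$ acts relative to this splitting. For $Z \in \got g$, $\pi(Z)E = 0$ and skew-symmetry give $\langle \pi(Z)X, E\rangle = -\langle X, \pi(Z)E\rangle = 0$ for every $X$. So $\pi(Z)\got v \subseteq E^{\perp} = \got w \oplus \R E$. Next, $\langle \pi(Z)F, F\rangle = 0$, and writing $\pi(Z)F = u_Z + c_Z E$ with $u_Z \in \got w$ we get $c_Z = \langle \pi(Z)F, F\rangle = 0$, so $\pi(Z)F = u_Z \in \got w$. For $X \in \got w$, $\pi(Z)X = \pi_0(Z)X + \alpha_Z(X)E$, where $\pi_0(Z)$ is the $\got w$-component. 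From $\langle \pi(Z)X, F\rangle = -\langle X, u_Z\rangle$ we get $\alpha_Z(X) = -\langle X, u_Z\rangle$.

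The second step is to compute brackets. From $\langle [X,Y], Z\rangle = \langle \pi(Z)X, Y\rangle$ we see that $[E, \cdot\,] = 0$, that $[\got w, \got w] \subseteq \got g$ with $\langle [X,Y],Z\rangle = \langle \pi_0(Z)X, Y\rangle$, and that $[F, X] \in \got g$ is determined by $\langle [F,X], Z\rangle = \langle u_Z, X\rangle$. Hence $\got m_0 = \got g \oplus \got w$ is an ideal, $\got g$ is central in it, and $\R E$ is a central summand. The map $D = \ad_F|_{\got m_0}$ sends $\got w \to \got g$ and kills $\got g$, so it is a nilpotent derivation of the $2$-step algebra $\got m_0$. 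It follows that $\got n \simeq (\got m_0 \rtimes_D \R F) \oplus \R E$ as Lie algebras; the $E$ summand is a direct summand, and the "central extension" phrasing in the statement is exactly this.

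The third step, which is the main obstacle, is to verify that $\got m_0$ is naturally reductive. I would check that $\pi_0 : \got g \to \got{so}(\got w)$ is a representation by comparing $\got w$-components of $[\pi(Z), \pi(Z')] = \pi([Z,Z']_{\got g})$. Since $\pi(Z')X$ has $E$-component, applying $\pi(Z)$ to it only adds $\alpha_{Z'}(X)\pi(Z)E = 0$, so the $\got w$-component of $\pi(Z)\pi(Z')X$ is $\pi_0(Z)\pi_0(Z')X$. Thus $\pi_0$ is a homomorphism and is skew on the Riemannian space $\got w$. The delicate points are faithfulness, nondegeneracy of the commutator, and the absence of a flat factor. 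Faithfulness can fail: there may be $Z$ with $\pi_0(Z) = 0$ but $u_Z \neq 0$. I would handle this by letting $\got k = \ker \pi_0$, which is an ideal of the compact algebra $\got g$ (Proposition~\ref{prop:gcompact}), and splitting $\got g = \got g_1 \oplus \got k$ orthogonally. Then $\got m_0$ is the Riemannian $\got n(\got g_1, \got w, \pi_0|_{\got g_1})$ plus the central abelian piece $\got k$. After discarding the trivial subrepresentation of $\pi_0$ as a flat factor in $\got w$, Lemma~\ref{lemma:nat_red_cond} (or Lauret's Riemannian theory) shows that $\got m_0$ is naturally reductive, since $\got k$ is a Euclidean flat factor. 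I expect most of the care to go into this bookkeeping: checking that $D$ is compatible with the decomposition, and that the statement only demands that $\got m_0$ be naturally reductive rather than free of flat factors.
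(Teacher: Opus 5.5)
Your proof is correct. Its first two steps match the paper's: placing $F$ in $(\got n')^{\perp}$, setting $\got m_0=\got n'\oplus\got w$ with $\got w=\{E,F\}^{\perp}\cap\got v$, and reading off $\got n\simeq(\got m_0\rtimes_{\ad_F}\R F)\oplus\R E$.

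The routes diverge at the natural reductivity of $\got m_0$.

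The paper does not use the data-set description at this point. It proceeds as follows:
\begin{enumerate}
\item It takes the $\ad(\got h^{\aut})$-equivariant geodesic graph $\xi:\got n\to\got h^{\aut}$ supplied by the Geodesic Lemma.
\item It notes that every $D\in\got h^{\aut}$ preserves $\R E=(\got n')^{\perp}\cap\got z$, hence also $(\R E)^{\perp}=\got m_0\oplus\R E$.
\item It shows that the compression $\tilde D=p\circ D|_{\got m_0}$ is a skew-symmetric derivation of $\got m_0$.
\item It checks that $A\mapsto p(\xi(A)|_{\got m_0})$ is a linear geodesic graph for $\got m_0$.
\item It invokes the Riemannian fact that a linear geodesic graph, equivariant or not, already forces natural reductivity.
\end{enumerate}

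You compress the representation instead. The map $\pi_0(Z)=p\circ\pi(Z)|_{\got w}$ is a homomorphism into $\got{so}(\got w)$, because $\pi(\got g)$ kills $E$ and maps $\got v$ into $\got w\oplus\R E$. After splitting off $\got k=\ker\pi_0$ and the $\pi_0$-fixed vectors in $\got w$, you exhibit $\got m_0$ as $\got n(\got g_1,\got w_1,\pi_0)$ times a Euclidean factor. The converse direction, (3)$\Rightarrow$(1) of Lemma~\ref{lemma:nat_red_cond} or Lauret's theory, then finishes the argument.

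The two compressions are really the same object. If one uses the graph $\xi(Z+X)=(\ad_Z,\pi(Z))$ of Proposition~\ref{prop:geodesic_graph_dataset}, the paper's compressed graph is exactly $Z\mapsto(\ad_Z,\pi_0(Z))$.

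Your version is more explicit: it identifies a Riemannian data set underlying $\got m_0$. It also avoids both the compression of skew derivations and the Riemannian linear-geodesic-graph criterion. The price is the flat-factor bookkeeping, which is genuinely needed: in Example~\ref{example:1} one has $\pi_0=0$ and $\got m_0$ abelian. The paper's geodesic-graph argument is insensitive to flat factors and needs no such splitting.
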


\begin{proof}
    Since $\got z$ is degenerate, we can write $\got z = \got n' \oplus \R E$, where $E$ is a lightlike vector spanning the radical of $\got z$; in particular, $E$ is orthogonal to $\got z$. The subspace $\got n'$ is non-degenerate, hence its orthogonal complement $(\got n')^{\perp}$ is also non-degenerate, and the restricted metric $\langle \cdot, \cdot \rangle|_{(\got n')^{\perp}}$ is Lorentzian. In particular, $E \in (\got n')^{\perp}$, and there exists $F \in (\got n')^{\perp}$ such that $\langle E, F \rangle = 1$. The subspace $\got n'$ is contained in $(\operatorname{span}\{E,F\})^{\perp}$. Hence there exists a subspace $\got b$ such that $\got n$ decomposes as the orthogonal direct sum
    \begin{equation*}
        \got n = \got n' \oplus \operatorname{span}\{E,F\} \oplus \got b.
    \end{equation*}

    Define $\got m_0 = \got n' \oplus \got b$ and $\rho = \ad_F$. Since $[\got b, \got b] \subset \got n'$, $\got m_0$ is a Lie subalgebra of $\got n$. The derivation $\rho$ satisfies $\rho E = \rho F = 0$, $\rho(\got n') = \{0\}$ and $\rho(\got b) \subset \got n'$. Hence $\rho$ is a $2$-step nilpotent derivation of $\got m_0$.

    Therefore $\got n$ is Lie-algebra isomorphic to the Lorentzian double extension of $\got m_0$ by $\rho$. It remains to prove that $\got m_0$ is naturally reductive. Observe that the restricted metric $\langle \cdot, \cdot \rangle|_{\got m_0}$ is Riemannian.

    Let $\got m_1 = \got m_0 \oplus \R E = (\R E)^{\perp}$. For any $D \in \got h^{\aut}$, $D(\got z) \subset \got z$ and $D(\got n') \subset \got n'$. It follows that $(\got n')^{\perp} \cap \got z = \R E$ is invariant under $D$, hence $\got h^{\aut}$-invariant. Therefore $\got m_1$ is $\got h^{\aut}$-invariant.

    Let $p: \got n \to \got m_0$ be the projection, and let $D \in \got h^{\aut}$. For every $A \in \got m_0 \subset \got m_1$, $D A \in \got m_1$. Hence
    \begin{equation*}
        D A = \tilde{D} A + \langle D A, F \rangle E,
    \end{equation*}
    where $\tilde{D} = p \circ D: \got m_0 \to \got m_0$.

    Set $\lambda_D(A) = \langle D A, F \rangle$. Then $\lambda_D: \got m_0 \to \R$ is a linear functional, and since $\got n'$ is $D$-invariant, $\lambda_D(A) = 0$ for every $A \in \got n'$. In particular,
    \begin{equation*}
        \lambda_D([A,B]) = 0 \quad \text{for every } A, B \in \got m_0.
    \end{equation*}

    We show that $\tilde D$ is skew-symmetric with respect to the restricted metric $\langle \cdot, \cdot \rangle|_{\got m_0}$. Let $A, B \in \got m_0$. Then
    \begin{equation*}
        \langle \tilde D A, B \rangle = \langle D A - \lambda_D(A) E, B \rangle = \langle D A, B \rangle = -\langle A, D B \rangle = -\langle A, D B + \lambda_D(B) E \rangle = -\langle A, \tilde D B \rangle.
    \end{equation*}
    Furthermore,
    \begin{equation*}
        \begin{split}
            \tilde D ([A,B]) &= D([A,B]) - \lambda_D([A,B]) E = D([A,B]) = [D A, B] + [A, D B] \\
                             &= [\tilde D A + \lambda_D(A) E, B] + [A, \tilde D B + \lambda_D(B) E] \\
                             &= [\tilde D A, B] + [A, \tilde D B].
        \end{split}
    \end{equation*}
    Hence $\tilde D \in \got h^{\aut}_0$, where $\got h^{\aut}_0$ denotes the Lie algebra of skew-symmetric derivations of $\got m_0$. Therefore $\tilde D \in \got h^{\aut}_0$ for every $D \in \got h^{\aut}$.

    Since $N$ is naturally reductive, there exists a linear and $\ad(\got h^{\aut})$-equivariant geodesic graph $\xi: \got n \to \got h^{\aut}$. Then the map $\tilde \xi: \got m_0 \to \got h^{\aut}_0$ defined by
    \begin{equation*}
        \tilde \xi(A) = p\bigl(\xi(A)|_{\got m_0}\bigr), \quad \text{for } A \in \got m_0,
    \end{equation*}
    is a geodesic graph for $\got m_0$. Indeed, $\tilde \xi$ is linear, and for every $A, B \in \got m_0$,
    \begin{equation*}
        \begin{split}
            \langle [A + \tilde \xi(A), B], A \rangle &= \langle [A,B], A \rangle + \langle \tilde \xi(A) B, A \rangle = \langle [A,B], A \rangle + \langle \xi(A) B - \lambda_{\tilde \xi(A)}(B) E, A \rangle \\
            &= \langle [A + \xi(A), B], A \rangle = 0.
        \end{split}
    \end{equation*}

    Since $\got m_0$ is Riemannian with the restricted metric $\langle \cdot, \cdot \rangle|_{\got m_0}$, it is naturally reductive.
\end{proof}

Obtaining a converse of Theorem~\ref{teo:central_extension_semidirect} is in general difficult. However, when $\got m_0$ is abelian we obtain:

\begin{theorem}\label{teo:double_extension_abelian}
    Let $(\got m_0, [\cdot,\cdot]_0)$ be an abelian Lie algebra equipped with a Riemannian metric $\langle \cdot, \cdot \rangle_0$. Let $\rho$ be a $2$-step nilpotent linear map of $\got m_0$, and let $E, F \notin \got m_0$. Set
    \begin{equation*}
        \got n = (\got m_0 \rtimes_\rho \R F) \times \R E,
    \end{equation*}
    and define a metric $\langle \cdot, \cdot \rangle$ on $\got n$ by
    \begin{equation*}
        \langle \cdot, \cdot \rangle|_{\got m_0} = \langle \cdot, \cdot \rangle_0, \quad \langle E, \got m_0 \rangle = \langle F, \got m_0 \rangle = \langle E, E \rangle = \langle F, F \rangle = 0, \quad \langle E, F \rangle = 1.
    \end{equation*}
    Then $\got n = \got n(\got g, \got v, \pi)$, where $(\got g, \got v, \pi)$ is a degenerate Lorentzian data set.
\end{theorem}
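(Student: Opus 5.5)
We will realize $\got n$ explicitly as $\got n(\got g,\got v,\pi)$. Here $\got g$ is the commutator, $\got v$ is its orthogonal complement, and $\pi$ is read off from the bracket via~\eqref{eq:bracketNR}. Since $\got m_0$ is abelian, the only nonzero brackets of $\got n$ are $[F,X]=\rho X$ for $X\in\got m_0$, with $E$ central. We read "$2$-step nilpotent" as $\rho^2=0$. Then $[F,[F,X]]=\rho^2X=0$, so $\got n$ is a $2$-step nilpotent Lie algebra with $\got n'=\operatorname{Im}\rho\subset\got m_0$. In particular $\got n'$ is Riemannian, hence non-degenerate. We set
\begin{equation*}
\got g=\operatorname{Im}\rho,\qquad \got v=(\operatorname{Im}\rho)^{\perp}=\got b\oplus\operatorname{span}\{E,F\},\qquad \got b=(\operatorname{Im}\rho)^{\perp}\cap\got m_0,
\end{equation*}
and give $\got g$ the zero bracket and the restricted metric. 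This metric is trivially $\ad$-invariant, and $\got v$ is Lorentzian.

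Next we define $\pi$ by requiring $\langle[A,B],Z\rangle=\langle\pi(Z)A,B\rangle$. With $\rho^{T}$ the adjoint of $\rho$ on $\got m_0$, this forces, for $Z\in\got g$ and $X\in\got b$,
\begin{equation*}
\pi(Z)F=\rho^{T}Z,\qquad \pi(Z)X=-\langle\rho X,Z\rangle E,\qquad \pi(Z)E=0.
\end{equation*}
We then check four things.
\begin{itemize}
\item $\rho^{T}Z\in\got b$: this holds because $\langle\rho^{T}Z,\rho Y\rangle=\langle Z,\rho^{2}Y\rangle=0$.
\item $\pi(Z)$ is skew-symmetric on $\got v$: this is a direct check on the basis blocks.
\item $\pi$ is a representation of the abelian algebra $\got g$: the only possibly nonzero value of $[\pi(Z),\pi(Z')]$ is on $F$, where it equals $\bigl(\langle\rho\rho^{T}Z,Z'\rangle-\langle\rho\rho^{T}Z',Z\rangle\bigr)E=0$ by symmetry of $\rho\rho^{T}$.
\item $\pi$ is faithful: if $\pi(Z)=0$ then $\rho^{T}Z=0$, so $Z\perp\operatorname{Im}\rho$; since also $Z\in\operatorname{Im}\rho$, we get $Z=0$.
\end{itemize}
Finally, the bracket of $\got n(\got g,\got v,\pi)$ agrees with the given one by construction, and the metrics coincide.

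It remains to compute $\bigcap_{Z\in\got g}\ker\pi(Z)$. Since $\pi(Z)F=\rho^{T}Z\neq0$ for $Z\neq0$, a vector with nonzero $F$-component is not in the intersection. So the intersection equals $\R E\oplus\{X\in\got b:\rho X\perp\operatorname{Im}\rho\}=\R E\oplus(\ker\rho\cap\got b)$. To conclude that we have a degenerate Lorentzian data set we need $\ker\rho\cap(\operatorname{Im}\rho)^{\perp}=0$, i.e.\ $\ker\rho=\operatorname{Im}\rho$.

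\textbf{Main obstacle.} This last condition is where we expect trouble. It is not implied by $\rho^2=0$ alone; for instance $\rho=0$ on a positive-dimensional $\got m_0$ violates it. Our plan is first to check whether the intended meaning of the hypothesis excludes such degenerate directions. If not, we argue as follows. Any $X\in\ker\rho\cap\got b$ is central and orthogonal to $\got n'$, so it lies in the ideal $\got a_0$ of Lemma~\ref{lemma:ideala0}. By Proposition~\ref{prop:not_split-off_a_flat_factor}, such $X$ spans a Euclidean flat factor. So the statement must be read as $\ker\rho=\operatorname{Im}\rho$, equivalently as the absence of a flat factor in the sense of Remark~\ref{rem:flat_factor}. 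Under that hypothesis the intersection is exactly $\R E$ with $E$ lightlike, and the proof is complete.
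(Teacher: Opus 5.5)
Your proposal is correct and is essentially the paper's proof. It uses the same splitting $\got v=\got b\oplus\operatorname{span}\{E,F\}$ with $\got g=\got n'$ abelian, the same formulas $\pi(Z)X=-\langle\rho X,Z\rangle E$, $\pi(Z)E=0$, $\pi(Z)F=\rho^{T}Z$ (the paper writes the last one in an orthonormal basis of $\got b$), and the same checks of faithfulness, commutativity and $\bigcap_{Z}\ker\pi(Z)=\R E$.

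The obstacle you flag is genuine, and the paper's proof relies on the same extra hypothesis without stating it: it writes $\got n'=\ker\rho$ and takes orthonormal bases of $\got b$ and $\got n'$ of equal size $p$. Without $\ker\rho=\operatorname{Im}\rho$ the statement fails. For example, $\rho e_1=e_2$, $\rho e_2=\rho e_3=0$ on $\R^3$ gives $\dim\got z=\dim\got n'+2$, contradicting Proposition~\ref{prop:centro_deg_dataset}. So adding $\ker\rho=\operatorname{Im}\rho$ as an explicit hypothesis, as you do, is the right fix.
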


\begin{proof}
    The non-zero Lie brackets of $\got n$ are of the form
    \begin{equation*}
        [F, A] = \rho(A), \quad \text{for } A \in \got m_0.
    \end{equation*}

    Let $\got b$ be the orthogonal complement of $\got n' = \ker \rho$ in $\got m_0$. Then $\got n$ decomposes as the orthogonal direct sum
    \begin{equation*}
        \got n = \got n' \oplus \got b \oplus \operatorname{span}\{E, F\}.
    \end{equation*}
    The subspaces $\got b$ and $\got n'$ are Riemannian. Let $\{X_1, X_2, \ldots, X_p\}$ be an orthonormal basis of $\got b$, and let $\{Z_1, Z_2, \ldots, Z_p\}$ be an orthonormal basis of $\got n'$.

    Let $\got v = \got b \oplus \operatorname{span}\{E, F\}$. Define a linear map $j: \got n' \to \got{so}(\got v, \langle \cdot, \cdot \rangle|_{\got v})$ by requiring
    \begin{equation}\label{eq:cond_j_ext_doble}
        \langle j(Z) X, Y \rangle = \langle [X,Y], Z \rangle \quad \text{for } X, Y \in \got v, \, Z \in \got n'.
    \end{equation}

    Let $Z \in \got n'$ and $Y \in \got v$, and write
    \begin{equation*}
        j(Z) Y = \sum_{i=1}^p \alpha_i X_i + \beta E + \gamma F
    \end{equation*}
    for some $\alpha_1, \ldots, \alpha_p, \beta, \gamma \in \R$. Then~\eqref{eq:cond_j_ext_doble} holds if $\alpha_i = 0$ for every $i = 1, 2, \ldots, p$, $\gamma = 0$, and
    \begin{equation*}
        \beta = -\langle \rho(Y), Z \rangle.
    \end{equation*}
    Therefore
    \begin{equation}\label{eq:cond_j_ext_doble_2}
        j(Z) Y = -\langle \rho(Y), Z \rangle E.
    \end{equation}

    For the vector $E$, $\langle j(Z) E, X \rangle = \langle [E, X], Z \rangle = 0$ for every $X \in \got v$. Hence,
    \begin{equation}\label{eq:cond_j_ext_doble_3}
        j(Z) E = 0.
    \end{equation}

    Next, consider $F$. Write
    \begin{equation*}
        j(Z) F = \sum_{i=1}^p \alpha_i X_i + \beta E + \gamma F.
    \end{equation*}
    Then~\eqref{eq:cond_j_ext_doble} holds if $\beta = \gamma = 0$ and
    \begin{equation*}
        \alpha_i = \langle \rho(X_i), Z \rangle \quad \text{for every } i = 1, 2, \ldots, p.
    \end{equation*}
    Hence,
    \begin{equation}\label{eq:cond_j_ext_doble_4}
        j(Z) F = \sum_{i=1}^p \langle \rho(X_i), Z \rangle X_i.
    \end{equation}

    Therefore equations~\eqref{eq:cond_j_ext_doble_2},~\eqref{eq:cond_j_ext_doble_3} and~\eqref{eq:cond_j_ext_doble_4} define a map $j$ satisfying~\eqref{eq:cond_j_ext_doble}. This map is injective and satisfies $[j(Z), j(Z')] = 0$ for every $Z, Z' \in \got n'$. Hence $j$ is a faithful representation. Since
    \begin{equation*}
        \bigcap_{Z \in \got n'} \ker j(Z) = \R E,
    \end{equation*}
    $(\got n', \got v, j)$ is a degenerate Lorentzian data set and $\got n = \got n(\got n', \got v, j)$.
\end{proof}

The previous theorem yields a family of examples of Lie algebras $\got n$ associated with a degenerate Lorentzian data set. We illustrate this with an example.

\begin{example}\label{example:1}
    Let $\got m_0$ be a $4$-dimensional abelian Lie algebra with basis $\{X_1, X_2, X_3, X_4\}$, endowed with a Riemannian metric $\langle \cdot, \cdot \rangle_0$ whose matrix with respect to this basis is the identity. Let $\rho: \got m_0 \to \got m_0$ be the linear map defined by
    \begin{equation*}
        \rho X_1 = X_2, \quad \rho X_3 = X_4, \quad \rho X_2 = \rho X_4 = 0.
    \end{equation*}
    The Lorentzian double extension of $\got m_0$ by $\rho$ is the Lie algebra $\got n = \operatorname{span}\{X_1, X_2, X_3, X_4, E, F\}$ with non-zero brackets
    \begin{equation*}
        [F, X_1] = X_2, \quad [F, X_3] = X_4.
    \end{equation*}
    Note that $\got n$ is isomorphic, as a Lie algebra, to the direct sum of the five-dimensional Heisenberg Lie algebra and $\R$.

    The commutator of $\got n$ is $\got n' = \operatorname{span}\{X_2, X_4\}$, and its orthogonal complement is $\got v = \operatorname{span}\{X_1, X_3, F, E\}$. The representation $\pi: \got n' \to \got{so}(\got v, \langle \cdot, \cdot \rangle|_{\got v})$ acts on $\got v$, with respect to the basis $\{X_1, X_3, F, E\}$, as
    \begin{equation*}
        \pi(X_2) = \begin{pmatrix}
            0 & 0 & 0 & 1 \\
            0 & 0 & 0 & 0 \\
            -1 & 0 & 0 & 0 \\
            0 & 0 & 0 & 0
        \end{pmatrix}, \quad \pi(X_4) = \begin{pmatrix}
            0 & 0 & 0 & 0 \\
            0 & 0 & 0 & 1 \\
            0 & -1 & 0 & 0 \\
            0 & 0 & 0 & 0
        \end{pmatrix}.
    \end{equation*}

    Since $[\pi(X_2), \pi(X_4)] = 0$, the function $\tau: \got n' \to \got{so}(\got n')$ defined by $\tau_Z \equiv 0$ for each $Z \in \got n'$ satisfies item~\eqref{lemma:nat_red_cond_item_3} of Lemma~\ref{lemma:nat_red_cond}. Therefore $\got n = \got n(\got g, \got v, \pi)$, where $\got g = \got n'$ is an abelian Lie algebra.
\end{example}

\section{Action of the representation \texorpdfstring{$\pi$}{pi} on \texorpdfstring{$\got v$}{v}}\label{sec:6_Action_of_the_representation_pi_on_v}

Let $(\got g, \got v, \pi)$ be a Lorentzian data set. When $\got g$ is Lorentzian and $\got v$ is Riemannian, the following result is proved analogously to the Riemannian case (cf.~\cite[Lemma~3.11]{Lauret1999}).

\begin{theorem}\label{lema:tec}
    Let $(\got g, \got v, \pi)$ be a Lorentzian data set with $\got v$ Riemannian, and write $\got g = \got g' \oplus \got c$, where $\got g' = [\got g, \got g]$ and $\got c$ is the center of $\got g$. Then $\got v$ admits an orthogonal decomposition
    \begin{equation}\label{eq:descgotv}
        \got v = \got v_1 \oplus \cdots \oplus \got v_k
    \end{equation}
    into $\pi(\got g)$-irreducible subspaces such that, for each $i = 1, \ldots, k$, there exists a skew-symmetric map $J_i: \got v_i \to \got v_i$ satisfying $J_i^2 = -I$, and for every $Z \in \got c$,
    \begin{equation*}
        \pi(Z)|_{\got v_i} = \lambda_i(Z) J_i \quad \text{for some } \lambda_i(Z) \in \R.
    \end{equation*}
\end{theorem}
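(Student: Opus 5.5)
Since $\got v$ is Riemannian and $\pi(\got g)\subset\got{so}(\got v)$, the plan is to follow Lauret's argument \cite[Lemma~3.11]{Lauret1999}. The first step is a decomposition of $\got v$ into an orthogonal sum of $\pi(\got g)$-irreducible subspaces. This exists because orthogonal complements of invariant subspaces are invariant, and a Riemannian metric has no degenerate invariant subspaces. The goal is then to choose this decomposition so that the center $\got c$ acts on each summand by multiples of a single complex structure.

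The key observation is that $\pi(\got c)$ is an abelian family of skew-symmetric operators on a Euclidean space, commuting with all of $\pi(\got g)$. On any irreducible summand $\got w$, Schur's lemma for real representations shows that the commutant $C(\got w)$ of $\pi(\got g)|_{\got w}$ is a real division algebra, namely $\R$, $\mathbb C$ or $\mathbb H$. For $Z\in\got c$, the operator $\pi(Z)|_{\got w}$ lies in $C(\got w)$ and is skew-symmetric. It is therefore either zero or invertible. If it is nonzero, then $\pi(Z)|_{\got w}^2$ is a negative semidefinite symmetric element of a division algebra, hence a negative scalar $-\mu^2$. Normalizing gives $J=\mu^{-1}\pi(Z)|_{\got w}$ with $J^2=-I$ and $J$ skew-symmetric.

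The step I expect to be the main obstacle is handling several central elements simultaneously. The skew-symmetric parts of the commutant, $\mathbb C$ or $\mathbb H$, could make $\pi(\got c)|_{\got w}$ span more than one imaginary direction. I would argue as follows. The family $\pi(\got c)|_{\got w}$ is commuting, so it lies in a commutative subalgebra of $C(\got w)$. In $\mathbb H$, any commutative subalgebra containing a nonreal element is a copy of $\mathbb C$, whose imaginary part is one-dimensional. In $\mathbb C$ and $\R$ the claim is immediate. So the span of $\pi(\got c)|_{\got w}$ has dimension at most one. If it is zero, I would still need some $J_i$: I would pick any orthogonal complex structure in the commutant when one exists, or otherwise accept $\lambda_i\equiv 0$ with an arbitrary skew $J_i$, $J_i^2=-I$. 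This last choice requires $\dim\got w$ even.

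That parity issue also needs care. If $\got w$ is odd-dimensional, then $\got c$ acts trivially on it, and a skew-symmetric $J$ with $J^2=-I$ cannot exist there. I would handle this as Lauret does, by grouping trivially-acted-on summands in pairs, or by noting that the statement only requires $\lambda_i(Z)J_i$ to equal $\pi(Z)|_{\got v_i}=0$. In the latter reading any $J_i$ works whenever $\dim\got v_i$ is even. For the odd case I would check whether irreducibility together with faithfulness of the whole $\got g$ forces evenness, or else follow Lauret's convention literally.

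Finally, $Z\mapsto\lambda_i(Z)$ is linear because $\pi$ is linear and $J_i$ is fixed. The summands are orthogonal by construction, which completes the decomposition \eqref{eq:descgotv}.
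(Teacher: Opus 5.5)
Your Schur-lemma argument is correct, and it is the route the paper takes. The paper gives no proof of its own and defers to Lauret's Lemma~3.11, which is the argument you describe. The commutant of an irreducible summand is $\R$, $\mathbb C$ or $\mathbb H$. A nonzero skew-symmetric element of it squares to a negative scalar. Commuting purely imaginary quaternions are proportional, so $\pi(\got c)|_{\got v_i}\subset\R J_i$. You also do not need to choose the decomposition cleverly: the argument works summand by summand for any orthogonal decomposition into $\pi(\got g)$-irreducibles.

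The one loose end is the parity issue you flag, and neither proposed fix works. Evenness is not forced. Take $\got g=\got{so}(3)\oplus\R C$, with a positive definite $\ad$-invariant metric on $\got{so}(3)$, $C\perp\got{so}(3)$ and $\langle C,C\rangle=-1$. Let $\got v=\R^3\oplus\R^2$ be Euclidean. Let $\got{so}(3)$ act on $\R^3$ by its standard representation and trivially on $\R^2$, and let $\pi(C)$ act on $\R^2$ by $e_1\mapsto e_2$, $e_2\mapsto -e_1$ and trivially on $\R^3$. This $\pi$ is faithful and has no trivial subrepresentation, and $\got n$ has signature one. So this is a Lorentzian data set with $\got v$ Riemannian. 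Every decomposition of $\got v$ into irreducibles contains a $3$-dimensional summand, which carries no skew $J$ with $J^2=-I$. Pairing summands is not allowed, since the $\got v_i$ must be irreducible; here it would not even help, because $3+2$ is odd.

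So the statement is literally false as written. The correct reading is the one you half-suggest: $J_i$ is needed only on summands where $\pi(\got c)$ acts nontrivially, and these are automatically even-dimensional. On the remaining summands $\lambda_i\equiv 0$ and $J_i$ plays no role. This weaker form is all the paper uses later: in the proof of Theorem~\ref{teo:16}, $J_i$ is invoked only for $i\in I$, that is, when $\pi(Z)|_{\got v_i}\neq 0$. With this qualification your proof is complete.
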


When $\got g$ is Riemannian and $\got v$ is Lorentzian, it is not possible to decompose $\got v$ into $\pi(\got g)$-irreducible orthogonal subspaces. We will show, however, that $\got v$ decomposes as an orthogonal sum of a first reducible factor---a $2$-dimensional Lorentzian subspace generated by two invariant lightlike vectors---and a sum of irreducible Riemannian subspaces (cf.~Theorem~\ref{teo:imp1} below). To do this, we first prove some technical results on the Lie algebra $\got{so}(1,n)$ of the Lorentzian isometry group.

Recall that if $\got v$ is Lorentzian, say of dimension $n+1$, then $\got v$ can be identified with the Lorentzian space $\R^{1,n}$, i.e., $\R^{n+1}$ with the canonical Lorentzian metric
\begin{equation*}
    \langle x, y \rangle_1 = -x_1 y_1 + \sum_{j=2}^{n+1} x_j y_j = x^t M y, \quad \text{with } M = \left(\begin{array}{c|c}
        -1 & 0 \\ \hline
        0 & \Id_n
    \end{array}\right),
\end{equation*}
and $\got{so}(\got v, \langle \cdot, \cdot \rangle_{\got v}) \simeq \got{so}(1,n)$, where $\got{so}(1,n)$ is the Lie algebra of the isometry group $\OO(1,n)$ of $(\R^{1,n}, \langle \cdot, \cdot \rangle_1)$:
\begin{align*}
    \got{so}(1,n) &= \{ A \in \got{gl}(n+1, \R) : \langle A v, w \rangle_1 + \langle v, A w \rangle_1 = 0 \text{ for all } v, w \in \R^{n+1} \} \\
                  &= \left\{ \left(\begin{array}{c|c}
                                0 & x^t \\ \hline
                                x & B
                            \end{array}\right) : x \in \R^n,\, B \in \got{so}(n) \right\}.
\end{align*}

\begin{lemma}\label{lema:SO_fixedvector}
    Let $K$ be a compact subgroup of the Lie group $\SO_+(1,n)$ (the connected component of the identity in $\OO(1,n)$). Then there exists a timelike vector $v \in \R^{1,n}$ that is fixed by every element of $K$.
\end{lemma}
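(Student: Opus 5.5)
The plan is the standard averaging argument with the Haar measure of $K$, together with the convexity of the future light cone. Since $K\subset \SO_+(1,n)$, every element of $K$ preserves the time orientation. Hence $K$ maps the open future timelike cone
\begin{equation*}
    \mathcal C^+=\{v\in\R^{1,n}:\langle v,v\rangle_1<0,\ v_1>0\}
\end{equation*}
into itself. Fix any $v_0\in\mathcal C^+$, for instance $v_0=e_1$, and let $\mu$ be the normalized Haar measure on the compact group $K$. Define
\begin{equation*}
    v=\int_K k\,v_0\,d\mu(k).
\end{equation*}
By left invariance of $\mu$, $h v=\int_K hk\,v_0\,d\mu(k)=v$ for every $h\in K$, so $v$ is fixed by $K$.

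It then remains to check that $v$ is timelike, which is the only point needing care, since an average of timelike vectors could a priori fail to be timelike. Here I use that $\mathcal C^+$ is an open convex cone. The reverse Cauchy--Schwarz inequality gives $\langle u,w\rangle_1<0$ for $u,w\in\mathcal C^+$, so $\langle u+w,u+w\rangle_1<0$. Equivalently, $\mathcal C^+=\{v:\ v_1>\|(v_2,\dots,v_{n+1})\|\}$, and this set is convex by the triangle inequality for the Euclidean norm.

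The orbit $K v_0$ is a compact subset of $\mathcal C^+$. The function $f(w)=w_1-\|(w_2,\dots,w_{n+1})\|$ is continuous and concave, so it attains a minimum $\delta>0$ on $K v_0$. By Jensen's inequality (or directly, linearity of the integral in the first coordinate together with the triangle inequality for the integral of the spatial part), we get
\begin{equation*}
    f(v)\ \ge\ \int_K f(k v_0)\,d\mu(k)\ \ge\ \delta>0.
\end{equation*}
Hence $v\in\mathcal C^+$. In particular $v$ is timelike and nonzero, which proves the lemma.

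An alternative route, if one prefers to avoid integration, is to use that a compact subgroup of $\SO_+(1,n)$ is conjugate into a maximal compact subgroup $\SO(n)$, which fixes $e_1$. The averaging argument above is self-contained, though, so I would present that one.
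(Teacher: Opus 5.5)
Your proof is correct, and it takes a different route from the paper's.

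The paper views $\SO_+(1,n)$ as the identity component of $\Iso(\mathbb{H}^n)$, where $\mathbb{H}^n$ is the upper sheet of the hyperboloid $\langle x,x\rangle_1=-1$. It then applies Cartan's fixed point theorem for compact isometry groups of complete, simply connected, negatively curved manifolds. The resulting fixed point in $\mathbb{H}^n$ is the required timelike vector.

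You stay entirely in the linear picture. Haar averaging of an orbit $Kv_0$ gives a $K$-fixed vector. The only real content is that the average stays in the open future cone $\mathcal C^+$. You get this from the convexity of $\mathcal C^+$, via concavity of $w_1-\|(w_2,\dots,w_{n+1})\|$ and the triangle inequality for vector-valued integrals, and each of these steps is sound.

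What your route buys is elementarity and self-containment. It uses only the existence of Haar measure on the compact (hence closed, hence Lie) subgroup $K$ and the fact that elements of $\SO_+(1,n)$ are orthochronous. It also works verbatim for any compact linear group preserving an open convex cone.

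The paper's route is shorter, given the cited theorem, and ties the lemma directly to hyperbolic geometry. You recover its conclusion too: normalizing your $v$ gives a $K$-fixed point of $\mathbb{H}^n$. Your averaging is essentially a linear version of the center-of-mass argument behind Cartan's theorem.

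Your alternative via conjugacy into $\SO(n)$ is valid but not more elementary. The conjugacy of compact subgroups into a maximal compact subgroup is usually proved with the same fixed-point theorem. Presenting the averaging argument, as you propose, is the better choice.
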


\begin{proof}
    Consider the $n$-dimensional hyperbolic space $\mathbb{H}^n$, viewed as the $n$-dimensional Riemannian submanifold of $\R^{1,n}$
    \begin{equation*}
        \mathbb{H}^n = \{ x \in \R^{1,n} : \langle x, x \rangle = -1, \ x_1 > 0 \}.
    \end{equation*}
    Then $\SO_+(1,n)$ is the connected component of the identity of $\Iso(\mathbb{H}^n)$. Hence $K \subset \SO_+(1,n)$ is a compact group acting by isometries on $\mathbb{H}^n$. Since $\mathbb{H}^n$ is complete, simply connected and has negative sectional curvature, Cartan's Fixed Point Theorem (cf.~\cite[Theorem~1.4.6]{Eberlein1996}) gives that $K$ has a fixed point in $\mathbb{H}^n$, which is a timelike vector of the Lorentzian space $\R^{1,n}$.
\end{proof}

\begin{lemma}\label{lemma:decom_lemma_2}
    Let $\got s$ be a compact semisimple Lie subalgebra of $\got{so}(1,n)$. Then
    \begin{equation*}
        \got v_0 = \bigcap_{A \in \got s} \ker A
    \end{equation*}
    contains at least one timelike vector. In particular, $\got v_0$ is non-degenerate, and if $\dim \got v_0 \geq 2$, then $\got v_0$ is a Lorentzian space.
\end{lemma}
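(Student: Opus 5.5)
The plan is to integrate $\got s$ to a compact subgroup of $\SO_+(1,n)$, apply Lemma~\ref{lema:SO_fixedvector} to get a fixed timelike vector, and then read off the statement about $\got v_0$ from elementary Lorentzian linear algebra.

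First, let $S$ be the connected Lie subgroup of $\SO_+(1,n)$ with Lie algebra $\got s$; it is connected, so it lies in the identity component. Since $\got s$ is compact semisimple, its Killing form is negative definite. Hence the simply connected group $\tilde S$ with Lie algebra $\got s$ is compact, by Weyl's theorem. The group $S$ is the image of $\tilde S$ under the homomorphism integrating the inclusion $\got s \hookrightarrow \got{so}(1,n)$, so $S$ is compact. This is the one step requiring care: a connected subgroup with compact Lie algebra need not be closed in general, but semisimplicity rules this out through Weyl's theorem.

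By Lemma~\ref{lema:SO_fixedvector} there is a timelike $v$ with $gv = v$ for all $g \in S$. Differentiating along $\exp(tA)$ for $A \in \got s$ gives $Av = 0$. Therefore $v \in \got v_0$, and $\got v_0$ contains a timelike vector.

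Now consider $v^\perp$. It is Riemannian, since its orthogonal complement $\R v$ is timelike and the ambient signature is $1$. Let $W = \got v_0 \cap v^\perp$; then $\got v_0 = \R v \oplus W$, because $v \in \got v_0$ and $\R^{1,n} = \R v \oplus v^\perp$. The metric on $W$ is positive definite, it is orthogonal to $v$, and $\langle v,v\rangle<0$. So the restricted metric on $\got v_0$ is the orthogonal sum of a negative line and a positive definite space. It is therefore non-degenerate of signature $1$, that is, Lorentzian whenever $\dim \got v_0 \ge 2$. If $\dim\got v_0=1$, then $\got v_0=\R v$ is still non-degenerate.
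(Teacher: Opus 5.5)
Your proof is correct and follows the paper's argument: integrate $\got s$ to a connected subgroup of $\SO_+(1,n)$, use its compactness and Lemma~\ref{lema:SO_fixedvector} to get a fixed timelike vector, and differentiate to place it in $\got v_0$. You also justify two points the paper only asserts: compactness of the integrated subgroup, via Weyl's theorem, and the Lorentzian signature of $\got v_0$, via the splitting $\got v_0 = \R v \oplus (\got v_0 \cap v^{\perp})$.
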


\begin{proof}
    Let $G$ be a connected subgroup of $\SO_+(1,n)$ with Lie algebra $\got s$. Since $\got s$ is compact and semisimple, $G$ is compact. By Lemma~\ref{lema:SO_fixedvector}, there is a timelike vector $v \in \R^{1,n}$ such that $e^{tA}(v) = v$ for every $A \in \got s$ and every $t$. Hence $A \cdot v = 0$ for every $A \in \got s$, i.e., $v \in \got v_0$. Therefore $\got v_0$ is either the one-dimensional subspace generated by $v$ or a Lorentzian subspace of $\got v$.
\end{proof}

The following result is immediate from the previous lemma.

\begin{corollary}\label{corolario_1}
    Let $\got s$ be a compact semisimple Lie algebra. Then there are no faithful representations
    \begin{equation*}
        \rho: \got s \to \got{so}(1,n)
    \end{equation*}
    without trivial subrepresentations.
\end{corollary}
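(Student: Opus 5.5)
The corollary follows from Lemma~\ref{lemma:decom_lemma_2} by contradiction. Suppose $\rho\colon \got s \to \got{so}(1,n)$ is a faithful representation of the compact semisimple Lie algebra $\got s$ with no trivial subrepresentations, i.e.\ $\bigcap_{A\in\got s}\ker\rho(A)=\{0\}$.

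Since $\rho$ is faithful, its image $\rho(\got s)$ is a Lie subalgebra of $\got{so}(1,n)$ isomorphic to $\got s$. Hence $\rho(\got s)$ is compact and semisimple, because both properties are invariant under Lie algebra isomorphism.

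Apply Lemma~\ref{lemma:decom_lemma_2} to this subalgebra. It gives that
\[
\got v_0=\bigcap_{A\in\rho(\got s)}\ker A=\bigcap_{X\in\got s}\ker\rho(X)
\]
contains a timelike vector $v$. In particular $v\neq 0$, so $\got v_0\neq\{0\}$, and $\R v$ is a trivial subrepresentation of $\rho$. This contradicts the assumption, which proves the corollary.

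The only point that needs checking is the hypothesis of Lemma~\ref{lemma:decom_lemma_2}: one must know that $\rho(\got s)$ is a compact subalgebra of $\got{so}(1,n)$ in the sense the lemma uses. In its proof, the lemma only needs that a connected subgroup $G$ of $\SO_+(1,n)$ with Lie algebra $\rho(\got s)$ is compact. This holds because $\rho(\got s)$ is compact semisimple, so every connected Lie group with that Lie algebra has finite center and is compact (Weyl's theorem). Hence there is no obstacle beyond this remark.

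If $\got s=0$ the statement is vacuous or degenerate, and I would simply note that it is understood for $\got s\neq 0$ (or allow $n\geq 0$, where the zero representation on $\R^{1,n}$ has trivial subrepresentations).
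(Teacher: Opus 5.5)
Your proposal is correct and is essentially the paper's argument: the paper records the corollary as immediate from Lemma~\ref{lemma:decom_lemma_2}, applied to the compact semisimple subalgebra $\rho(\got s)\subset\got{so}(1,n)$, whose common kernel then contains a timelike vector spanning a trivial subrepresentation. Your appeal to Weyl's theorem only spells out a point the paper leaves implicit, namely that the connected subgroup of $\SO_+(1,n)$ with Lie algebra $\rho(\got s)$ is compact, as that lemma's proof requires.
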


\begin{corollary}\label{coro:corolario_2}
    Let $(\got g, \got v, \pi)$ be a Lorentzian or degenerate Lorentzian data set. Then $\got g$ is not semisimple; that is, its center $\got c$ is non-trivial.
\end{corollary}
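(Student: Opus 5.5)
Suppose, for contradiction, that $\got g$ is semisimple. By Proposition~\ref{prop:gcompact}, $\got g$ is compact, so $\got g$ is a compact semisimple Lie algebra. The plan is to split according to the signatures allowed by the definition of the data set and to exploit the Lorentzian factor in each case.

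\emph{Case 1: $\got v$ is Lorentzian.} This covers every degenerate Lorentzian data set, by Proposition~\ref{prop:centro_deg_dataset}, and also the Lorentzian data sets with $\got g$ Riemannian. Here $\pi(\got g)$ is a compact semisimple subalgebra of $\got{so}(\got v)\simeq\got{so}(1,n)$, since $\pi$ is faithful. Lemma~\ref{lemma:decom_lemma_2} then gives a timelike vector $v$ in $\got v_0=\bigcap_{Z\in\got g}\ker\pi(Z)$.
\begin{itemize}
\item For a Lorentzian data set, $\got v_0=\{0\}$ by definition, which contradicts Corollary~\ref{corolario_1} directly.
\item For a degenerate Lorentzian data set, $\got v_0=\R E$ with $E$ lightlike. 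This subspace cannot contain the timelike vector $v$, which is again a contradiction.
\end{itemize}

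\emph{Case 2: $\got v$ is Riemannian and $\got g$ is Lorentzian.} This occurs only for Lorentzian data sets. Compactness of $\got g$ alone gives no contradiction, so we use the metric on $\got g$. The $\ad$-invariant metric on $\got g$ is Lorentzian, and $\got g$ is compact semisimple. Decompose $\got g=\bigoplus_i\got g_i$ into simple ideals. Any $\ad$-invariant symmetric form on a compact semisimple Lie algebra is a sum of multiples of the Killing forms on these ideals: the ideals are pairwise orthogonal, and on each simple ideal the form is a scalar multiple of the Killing form by Schur's lemma, since $\got g_i$ is absolutely irreducible as a compact simple algebra.

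The Lorentzian signature then forces exactly one ideal $\got g_{i_0}$ to carry a positive multiple of $B$, which is negative definite, and that ideal must be one-dimensional. This is impossible, because a simple ideal has dimension at least $3$. Hence $\got g$ cannot be semisimple in this case either. The remaining possibility, both $\got g$ and $\got v$ Riemannian, is excluded because the metric on $\got n$ is Lorentzian.

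The main obstacle is Case 2, where compactness alone is insufficient. The key step is to justify that $\ad$-invariant forms on a compact semisimple algebra are combinations of Killing forms on the simple factors. Once that is established, the signature count finishes the argument. In every case we conclude that $\got c\neq 0$.
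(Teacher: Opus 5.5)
Your proof is correct and uses the same ingredients as the paper. The Killing-form decomposition of the $\ad$-invariant metric rules out a Lorentzian $\got g$, and Lemma~\ref{lemma:decom_lemma_2} (equivalently Corollary~\ref{corolario_1}) rules out a compact semisimple $\pi(\got g)\subset\got{so}(1,n)$ whose common kernel is $\{0\}$ or $\R E$. The difference is only in organization: the paper splits by type of data set and first shows $\got g$ must be Riemannian, whereas you split by which of $\got g,\got v$ carries the timelike direction, which handles the degenerate case and the Lorentzian case with $\got g$ Riemannian in a single step.
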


\begin{proof}
    Consider first the case where $(\got g, \got v, \pi)$ is a Lorentzian data set. Suppose $\got g$ is semisimple, and decompose $\got g$ as the direct sum
    \begin{equation*}
        \got g = \got h_1 \oplus \cdots \oplus \got h_n
    \end{equation*}
    of simple ideals. By Proposition~\ref{prop:gcompact}, $\got g$ is compact and hence each $\got h_i$ is simple, compact, and $\dim \got h_i \geq 3$ for every $i = 1, \ldots, n$. It is standard that $\got h_i \perp \got h_j$ for $i \neq j$ with respect to the $\ad$-invariant metric $\langle \cdot, \cdot \rangle_{\got g}$, and $\langle \cdot, \cdot \rangle_{\got g}$ decomposes as
    \begin{equation*}
        \langle \cdot, \cdot \rangle_{\got g} = \lambda_1 B_1 + \cdots + \lambda_n B_n,
    \end{equation*}
    where $B_i$ is the Killing form of $\got h_i$ (see, e.g.,~\cite{ContiDelBarcoRossi2024}). Since $\got h_i$ is compact, each $B_i$ is negative-definite, and so either $\got g$ is Riemannian or $\langle \cdot, \cdot \rangle_{\got g}$ has signature $\nu \geq 2$, which cannot occur. We conclude that $\got g$ must be Riemannian, and hence $\got v$ is Lorentzian. But then $\pi: \got g \to \got{so}(\got v) \simeq \got{so}(1,n)$ is a faithful representation without trivial subrepresentations, contradicting Corollary~\ref{corolario_1}. Therefore $\got g$ is not semisimple when $(\got g, \got v, \pi)$ is a Lorentzian data set.

    Consider now the case where $(\got g, \got v, \pi)$ is a degenerate data set. Suppose $\got c = 0$, in which case $\got g$ is a compact, semisimple Lie algebra; since $\pi: \got g \to \got{so}(\got v) \simeq \got{so}(1,n)$ is faithful, $\pi(\got g)$ is a compact, semisimple subalgebra of $\got{so}(1,n)$. Then $\bigcap_{Z \in \got g} \ker(\pi(Z))$ is non-degenerate by Lemma~\ref{lemma:decom_lemma_2}, contradicting Definition~\ref{defi:data_set_degenerate}. Therefore $\got g$ is not semisimple.
\end{proof}

\begin{lemma}\label{lema:veryimpor}
    Let $\got a$ be an abelian subalgebra of $\got{so}(1,n)$ such that $\bigcap_{A \in \got a} \ker A = \{0\}$. Then
    \begin{equation*}
        \R^{1,n} = \got v_0 \oplus \got v_1 \oplus \cdots \oplus \got v_l
    \end{equation*}
    is the orthogonal sum of $\got a$-invariant subspaces, such that $\got v_i$ is Riemannian and irreducible for $i \geq 1$ and $\got v_0$ is Lorentzian of dimension $2$, which is in turn the sum of two invariant (and irreducible) one-dimensional subspaces generated by lightlike vectors.
\end{lemma}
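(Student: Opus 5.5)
The plan is to decompose $\R^{1,n}$ by simultaneous generalized eigenspaces of the commuting family $\got a$. Then the position of the single negative direction of the metric forces the structure claimed in Lemma~\ref{lema:veryimpor}.

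\textbf{Step 1 (weight decomposition).} Complexify and write $\mathbb{C}^{n+1}=\bigoplus_{\lambda} V_\lambda$, where $\lambda\in\got a^*_{\mathbb C}$ runs over the joint generalized weights of $\got a$. Since every $A\in\got a$ is skew, the usual argument gives $\langle V_\lambda,V_\mu\rangle=0$ unless $\lambda+\mu=0$ (the metric extended bilinearly). Grouping conjugate and opposite weights gives an orthogonal decomposition of $\R^{1,n}$ into real $\got a$-invariant non-degenerate blocks of three kinds:
\begin{itemize}
\item[(a)] $V_0$, on which every $A$ is nilpotent;
\item[(b)] $W_\theta=(V_{i\theta}\oplus V_{-i\theta})\cap\R^{n+1}$, for purely imaginary weights $i\theta\neq0$;
\item[(c)] blocks built from $V_\lambda\oplus V_{-\lambda}$ with $\operatorname{Re}\lambda\neq 0$.
\end{itemize}
In case (c), $V_\lambda$ and $V_{-\lambda}$ are totally isotropic and dually paired. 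So such a block has signature at least $\dim V_\lambda$ in each sign. Since the index is $1$, there is at most one such block: $\lambda$ is real, $\dim V_{\pm\lambda}=1$, and it is a $2$-dimensional Lorentzian plane spanned by two lightlike eigenvectors with eigenvalues $\pm\lambda$. This is the candidate $\got v_0$.

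\textbf{Step 2 (Riemannian blocks).} On any Riemannian invariant block, each $A$ is skew with respect to a positive definite form, hence semisimple with imaginary eigenvalues. Hence $\got a$ restricted there is a commuting family of real skew operators, and it splits orthogonally into irreducibles, which are $1$-dimensional trivial or $2$-dimensional rotation planes. Trivial summands lie in $\bigcap_{A}\ker A=\{0\}$, so they do not occur. In particular, a Riemannian $V_0$ carries only nilpotent skew operators, which are then zero, so $V_0=0$.

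\textbf{Step 3 (the negative direction lies in a block of type (c)).} This is the main obstacle. We must exclude that the Lorentzian block is $V_0$ or some $W_\theta$. Let $U$ be that Lorentzian block. Use the Jordan decomposition $A=S_A+N_A$ inside $\got{so}(U)$; all $S_A,N_A$ commute with $\got a$ and with each other.
\begin{itemize}
\item Suppose some $N_A\neq0$ on $U$. By Engel, $K=\bigcap_A\ker N_A|_U$ is a nonzero $\got a$-invariant subspace. It is degenerate, since on a non-degenerate $K$ its orthogonal complement would carry a nilpotent family forced into the Riemannian situation. Hence its radical gives an $\got a$-invariant lightlike line $\R\ell$. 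A real invariant line has a real eigenvalue, which must be $0$ in $V_0$ and is impossible in $W_\theta$ with $\theta\neq0$. In $V_0$ this gives $\ell\in\bigcap_A\ker A$, a contradiction.
\item Suppose all $N_A=0$ on $U$. Then $\got a|_U$ consists of commuting semisimple elements with imaginary eigenvalues. The closure of $\exp(\got a|_U)$ in $\SO_+(U)$ is then a compact torus. Lemma~\ref{lema:SO_fixedvector} gives a fixed timelike vector, which lies in $\bigcap_A\ker A$, again a contradiction.
\end{itemize}
Therefore the negative direction sits in the unique type (c) plane $\got v_0$. Its two lightlike eigenlines are invariant and irreducible, and all remaining blocks are Riemannian. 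Applying Step 2 to them yields $\got v_1\oplus\cdots\oplus\got v_l$.
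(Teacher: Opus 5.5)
Your argument is correct, but it takes a different route from the paper.

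\textbf{The paper's route.} The paper argues by induction on $n$. It invokes the Di Scala--Olmos theorem (no proper connected subgroup of $\SO_+(1,n)$ acts irreducibly on $\R^{1,n}$) to obtain an invariant subspace $V_1$. If $V_1$ is non-degenerate, it splits and inducts. If $V_1$ is degenerate, it extracts an invariant lightlike line $\R w_0$ and uses $\bigcap_A\ker A=\{0\}$ to find $T=e^{t_0A_0}$ with $Tw_0=\lambda w_0$, $\lambda\neq 1$. It then shows that the $1/\lambda$-eigenspace of $T$ is a single lightlike line $\R w_1$, which the commuting family must preserve.

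\textbf{Your route.} You use the joint weight decomposition instead. Signature counting shows there is at most one block with $\operatorname{Re}\lambda\neq 0$, and that it is a Lorentzian plane spanned by lightlike eigenvectors of weights $\pm\lambda$. You then exclude the timelike direction lying in the nilpotent or elliptic blocks via Engel and Lemma~\ref{lema:SO_fixedvector}.

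\textbf{Comparison.} Your approach avoids the Di Scala--Olmos classification and the induction, and stays within linear algebra plus the Cartan fixed-point lemma. It also gives more information: $\got v_0$ is canonical (the unique pair of real non-zero weights), and every Riemannian summand $\got v_i$ is a $2$-dimensional rotation plane. The paper's proof is shorter on the page, but it relies on a deep external theorem and has a rather terse inductive step.

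\textbf{One step to repair.} In Step 3 you justify that $K=\bigcap_A\ker N_A|_U$ is degenerate by saying the complement is ``forced into the Riemannian situation''. That only handles the case where $K^\perp$ is Riemannian; a priori a non-degenerate $K$ could be Riemannian with $K^\perp$ Lorentzian. The fix is one line and works for any signature:
\begin{itemize}
\item $K\neq U$, because some $N_A\neq 0$.
\item If $K$ were non-degenerate, then $K^\perp\neq\{0\}$ would be invariant under the commuting nilpotent family $\{N_A\}$.
\item Engel would then give $0\neq v\in K^\perp$ with $N_Av=0$ for all $A$, i.e.\ $v\in K\cap K^\perp=\{0\}$, a contradiction.
\end{itemize}
With that change, the rest of the argument goes through as written: the invariant lightlike radical, the real-eigenvalue contradiction in $V_0$ and $W_\theta$, and the compact-torus case.
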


\begin{proof}
    We argue by induction on $n$. If $n = 1$, $\got a = \got{so}(1,1)$ and $\got v_0 = \R^{1,1} = \R \cdot (1,1) \oplus \R \cdot (-1, 1)$, so the lemma holds. Suppose now that $n \geq 2$ and that the lemma is valid for every $k < n$. Let $\mathcal{A}$ be the abelian connected Lie subgroup of $\SO_+(1, n)$ with Lie algebra $\got a$.

    Since $\SO_+(1,n)$ is not abelian, $\mathcal{A} \subsetneq \SO_+(1,n)$. By~\cite[Theorem~1.1]{DiScalaOlmos2001}, there are no connected proper subgroups of $\SO_+(1,n)$ that act irreducibly on $\R^{1,n}$. Therefore the action of $\mathcal{A}$ leaves some subspace $V_1 \subset \R^{1,n}$ invariant. If $V_1$ is Lorentzian (or Riemannian, in which case $V_1^{\perp}$ is Lorentzian and invariant), we apply the inductive hypothesis together with Theorem~\ref{lema:tec} and the lemma follows.

    Suppose then that $V_1$ is an $\mathcal{A}$-invariant degenerate subspace of $\R^{1,n}$. Then $V_1$ contains a unique lightlike direction, say $\R w_0$, and since $\mathcal{A}$ acts by isometries and $V_1$ is $\mathcal{A}$-invariant, $\mathcal{A} \cdot w_0 \subset \R w_0$. That is, $w_0$ is a common eigenvector of all elements of $\mathcal{A}$.

    There exists at least one isometry $T \in \mathcal{A}$ such that $T(w_0) = \lambda w_0$ with $\lambda \neq \pm 1$. Indeed, for each $A \in \got a$ there exists a differentiable function $\lambda_A: \R \to \R$ such that
    \begin{equation*}
        e^{tA}(w_0) = \lambda_A(t) w_0.
    \end{equation*}
    Since $e^{tA}$ is invertible, $\lambda_A(t) \neq 0$ for each $A \in \got a$ and $t \in \R$. As $\lambda_A(0) = 1$, $\lambda_A(t) > 0$ for each $A \in \got a$ and $t \in \R$. If $\lambda_A \equiv 1$ for every $A \in \got a$, then $e^{tA} w_0 = w_0$ for every $A \in \got a$ and $t \in \R$, so $w_0 \in \bigcap_{A \in \got a} \ker A = \{0\}$, a contradiction. Hence there exists $T = e^{t_0 A_0}$ for some $A_0 \in \got a$ such that $T w_0 = \lambda w_0$ with $\lambda > 0$ and $\lambda \neq 1$; in particular, $\lambda \neq \pm 1$.

    Since $\lambda \neq \pm 1$, $T$ must have a second lightlike eigenvector, say $w_1$, with eigenvalue $1/\lambda$ (cf.~\cite[Lemma~1.61]{JavaloyesSanchez2010}). Let
    \begin{equation*}
        \got v_0 = \operatorname{span}\{w_0, w_1\}.
    \end{equation*}
    Then $\got v_0$ is a Lorentzian space (cf.~\cite[Lemma~1.44]{JavaloyesSanchez2010}). We show that $\got v_0$ is $\mathcal{A}$-invariant.

    Since $\got v_0$ is Lorentzian of dimension $2$, $U = \got v_0^{\perp}$ is Riemannian of dimension $n - 1$, and
    \begin{equation*}
        \R^{1,n} = \got v_0 \oplus U.
    \end{equation*}
    Let $E_{1/\lambda}$ be the eigenspace of $T$ associated with the eigenvalue $1/\lambda$. Let $w \in E_{1/\lambda}$, and write
    \begin{equation*}
        w = a w_0 + b w_1 + u,
    \end{equation*}
    where $u \in U$ is a spacelike vector and $a, b \in \R$. On the one hand,
    \begin{equation*}
        T w = \lambda a w_0 + \frac{b}{\lambda} w_1 + T u,
    \end{equation*}
    and on the other hand, since $w \in E_{1/\lambda}$,
    \begin{equation*}
        T w = \frac{1}{\lambda} w = \frac{a}{\lambda} w_0 + \frac{b}{\lambda} w_1 + \frac{1}{\lambda} u.
    \end{equation*}
    It follows that $\lambda^2 a = a$ and $T u = (1/\lambda) u$. Since $\lambda \neq \pm 1$, we must have $a = 0$. Moreover, either $u = 0$ or $u$ is a spacelike eigenvector of $T$ in $E_{1/\lambda}$. But non-lightlike eigenvectors of $T$ must be associated with eigenvalues $\pm 1$ (cf.~\cite[Prop.~1.57]{JavaloyesSanchez2010}). We conclude that $u = 0$, and therefore $E_{1/\lambda} = \R w_1$.

    Since every isometry in $\mathcal{A}$ commutes with $T$, it preserves the eigenspaces of $T$, and therefore $\mathcal{A}(\R w_1) \subset \R w_1$. We conclude that $\got v_0$ is $\mathcal{A}$-invariant, as desired. This, together with Theorem~\ref{lema:tec}, completes the proof.
\end{proof}

We can now generalize Theorem~\ref{lema:tec} to the case where $(\got g, \got v, \pi)$ is a Lorentzian data set with Lorentzian $\got v$, or a degenerate data set. We first state and prove a technical lemma.

\begin{lemma}\label{lemma:tec_lemma_imp1}
    Let $(\got g, \got v, \pi)$ be a Lorentzian data set with $\got v$ Lorentzian, or a degenerate Lorentzian data set. If $\got g'$ is non-trivial, then the subspace of $\got v$ defined by
    \begin{equation*}
        \got p = \bigcap_{Z \in \got g'} \ker \pi(Z)
    \end{equation*}
    is $\pi(\got g)$-invariant, Lorentzian, and $\dim \got p \geq 2$.
\end{lemma}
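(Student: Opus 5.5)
We need to prove three things about $\got p=\bigcap_{Z\in\got g'}\ker\pi(Z)$: invariance, that it is Lorentzian, and that $\dim\got p\ge 2$.

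\emph{Invariance.} Since $\got g'$ is an ideal of $\got g$, for $W\in\got g$, $Z\in\got g'$ and $X\in\got p$ we have
\[
\pi(Z)\pi(W)X=\pi(W)\pi(Z)X-\pi([W,Z])X=0,
\]
because $[W,Z]\in\got g'$. Hence $\pi(W)X\in\got p$, so $\got p$ is $\pi(\got g)$-invariant.

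\emph{$\got p$ contains a timelike vector.} In both cases $\got v$ is Lorentzian: by hypothesis in the first case, and by Proposition~\ref{prop:centro_deg_dataset} in the degenerate case. By Proposition~\ref{prop:gcompact}, $\got g'$ is compact semisimple. Since $\pi$ is faithful, $\pi(\got g')$ is a compact semisimple subalgebra of $\got{so}(\got v)\simeq\got{so}(1,n)$. Lemma~\ref{lemma:decom_lemma_2} then says $\got p$ contains a timelike vector $v$ and is non-degenerate, and that if $\dim\got p\ge 2$ it is Lorentzian. So the whole proof reduces to excluding $\got p=\R v$.

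\emph{Excluding $\got p=\R v$ (the main step).} Suppose $\got p=\R v$ with $v$ timelike. By invariance, $\pi(W)v\in\R v$ for all $W\in\got g$. Skew-symmetry gives $\langle\pi(W)v,v\rangle=0$, and since $\langle v,v\rangle\neq0$ this forces $\pi(W)v=0$. Thus $v\in\bigcap_{W\in\got g}\ker\pi(W)$.
\begin{itemize}
\item For a Lorentzian data set this intersection is $\{0\}$, a contradiction.
\item For a degenerate data set the intersection is $\R E$ with $E$ lightlike, so $v\in\R E$. This contradicts $v$ being timelike.
\end{itemize}
Hence $\dim\got p\ge2$, and by Lemma~\ref{lemma:decom_lemma_2} $\got p$ is Lorentzian.

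The hypothesis $\got g'\neq0$ is needed only so that Lemma~\ref{lemma:decom_lemma_2} applies to a genuine semisimple subalgebra. If $\got g'=0$, then $\got p=\got v$ and the statement holds trivially anyway.
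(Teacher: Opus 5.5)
Your proof is correct and follows the paper's argument. You establish invariance of $\got p$, use Lemma~\ref{lemma:decom_lemma_2} to get a timelike vector, and then rule out $\got p=\R v$ by skew-symmetry, which would put $v$ in the common kernel, equal to $\{0\}$ or $\R E$. The only minor difference is the invariance step: you use directly that $\got g'$ is an ideal, whereas the paper splits $Z\in\got g$ along $\got g'\oplus\got c$ and uses that the $\got c$-component is central. Your version is slightly cleaner, and you also make explicit that $\got v$ is Lorentzian in the degenerate case (Proposition~\ref{prop:centro_deg_dataset}), which the paper leaves implicit.
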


\begin{proof}
    We have $\got g = \got g' \oplus \got c$, where $\got c$ denotes the center of $\got g$. By Corollary~\ref{coro:corolario_2}, $\got c \neq \{0\}$.

    Decompose $Z \in \got g$ as $Z = Z_1 + Z_2$, with $Z_1 \in \got g'$ and $Z_2 \in \got c$. Fix $Z' \in \got g'$ and $X \in \got p$. We claim that $\pi(Z) X \in \got p$. Indeed,
    \begin{equation*}
        \begin{array}{lll}
            \pi(Z')(\pi(Z) X) &= \pi(Z')(\pi(Z_2) X) &= [\pi(Z'), \pi(Z_2)] X - \pi(Z_2) \pi(Z') X \\
                              &= [\pi(Z'), \pi(Z_2)] X &= \pi([Z', Z_2]) X = 0,
        \end{array}
    \end{equation*}
    since $Z_2 \in \got c$. As $Z' \in \got g'$ is arbitrary, $\pi(Z) X \in \got p$. Therefore $\got p$ is invariant under $\pi(\got g)$.

    By Lemma~\ref{lemma:decom_lemma_2}, since $\got g'$ is compact, $\got p$ contains a timelike vector. Hence $\got p$ is a Lorentzian subspace of $\got v$.

    We now show that $\dim \got p \geq 2$. Suppose, for contradiction, that $\dim \got p = 1$. Then
    \begin{equation*}
        \got p = \R X_0,
    \end{equation*}
    where $X_0$ is a timelike vector. As $\got p$ is $\pi(\got g)$-invariant, for every $Z \in \got g$ there exists $\lambda_Z \in \R$ such that
    \begin{equation*}
        \pi(Z) X_0 = \lambda_Z X_0.
    \end{equation*}
    Since $\pi(Z) \in \got{so}(\got v)$,
    \begin{equation*}
        0 = \langle \pi(Z) X_0, X_0 \rangle = \lambda_Z \langle X_0, X_0 \rangle.
    \end{equation*}
    As $X_0$ is timelike, $\langle X_0, X_0 \rangle \neq 0$, hence $\lambda_Z = 0$ for all $Z \in \got g$. Consequently, $\pi(Z) X_0 = 0$ for every $Z \in \got g$, and thus
    \begin{equation*}
        X_0 \in \bigcap_{Z \in \got g} \ker \pi(Z) = \begin{cases}
            \{0\}, & \text{if the data set is Lorentzian}, \\
            \R E, & \text{if the data set is degenerate},
        \end{cases}
    \end{equation*}
    where $E$ is a lightlike vector. In both cases we obtain a contradiction, since $X_0$ is timelike. Therefore $\dim \got p \geq 2$.
\end{proof}

\begin{theorem}\label{teo:imp1}
    Let $(\got g, \got v, \pi)$ be a Lorentzian data set with $\got v$ Lorentzian. Then:
    \begin{enumerate}
        \item\label{item1teoimp1} $\got v$ decomposes as an orthogonal sum
        \begin{equation*}
            \got v = \got v_0 \oplus \got v_1 \oplus \cdots \oplus \got v_k
        \end{equation*}
        of $\pi(\got g)$-invariant subspaces, where $\got v_i$ is Riemannian and irreducible for $i \geq 1$, and $\got v_0$ is Lorentzian of dimension $2$, which is in turn the sum of two invariant (and irreducible) one-dimensional subspaces generated by lightlike vectors.

        \item\label{item4ateoimp1} For every $i = 1, \ldots, k$, there exists a skew-symmetric map $J_i: \got v_i \to \got v_i$ such that $J_i^2 = -\Id$, and for every $Z \in \got c$,
        \begin{equation*}
            \pi(Z)|_{\got v_i} = \lambda_i(Z) J_i \quad \text{for some } \lambda_i(Z) \in \R.
        \end{equation*}

        \item\label{item5teoimp1} There exists $J_0 \in \got{so}(\got v_0) \simeq \got{so}(1,1)$ such that $J_0^2 = \Id$, and for every $Z \in \got c$,
        \begin{equation*}
            \pi(Z)|_{\got v_0} = \lambda_0(Z) J_0 \quad \text{for some } \lambda_0(Z) \in \R.
        \end{equation*}
    \end{enumerate}
\end{theorem}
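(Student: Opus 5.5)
The plan is to split according to whether $\got g' = [\got g,\got g]$ is trivial. By Proposition~\ref{prop:gcompact}, $\got g = \got g'\oplus\got c$ with $\got g'$ compact semisimple. By Corollary~\ref{coro:corolario_2}, $\got c\neq 0$.

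\emph{Case $\got g'=0$.} Here $\pi(\got g)$ is an abelian subalgebra of $\got{so}(\got v)\simeq\got{so}(1,n)$. Because the data set is Lorentzian, $\bigcap_Z\ker\pi(Z)=\{0\}$, so Lemma~\ref{lema:veryimpor} applies directly and gives item~\eqref{item1teoimp1}.

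\emph{Case $\got g'\neq0$.} Set $\got p=\bigcap_{Z\in\got g'}\ker\pi(Z)$. By Lemma~\ref{lemma:tec_lemma_imp1}, $\got p$ is $\pi(\got g)$-invariant, Lorentzian, and of dimension at least $2$. Then $\got p^\perp$ is Riemannian and $\pi(\got g)$-invariant, so it splits orthogonally into $\pi(\got g)$-irreducible pieces by complete reducibility for skew maps on a positive definite space. On $\got p$ only $\got c$ acts, since $\got g'$ acts trivially there. We still need $\bigcap_{Z\in\got c}\ker\pi(Z)|_{\got p}=\{0\}$. Any such common kernel vector in $\got p$ is killed by all of $\pi(\got g)$, hence is zero by the no-trivial-subrepresentation hypothesis. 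Applying Lemma~\ref{lema:veryimpor} to the abelian algebra $\pi(\got c)|_{\got p}\subset\got{so}(\got p)\simeq\got{so}(1,m)$ yields $\got p=\got v_0\oplus(\text{Riemannian irreducibles for }\got c)$. These Riemannian pieces are $\pi(\got g)$-irreducible because $\got g'$ acts trivially on them. Collecting all the Riemannian summands proves item~\eqref{item1teoimp1}.

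For item~\eqref{item4ateoimp1}, the argument runs as in the proof of Theorem~\ref{lema:tec}, i.e.\ Lauret's argument. On an irreducible Riemannian $\got v_i$, each $\pi(Z)|_{\got v_i}$ with $Z\in\got c$ commutes with $\pi(\got g)|_{\got v_i}$. By Schur's lemma, the commutant is a real division algebra ($\R$, $\mathbb C$ or $\mathbb H$). The skew elements of the commutant commuting with one another, together with the fact that $\pi(\got c)|_{\got v_i}$ is abelian and consists of skew maps, force $\pi(\got c)|_{\got v_i}\subset\R J_i$ with $J_i$ skew and $J_i^2=-\Id$. If $\pi(\got c)|_{\got v_i}=0$ we choose any compatible $J_i$, and $\lambda_i=0$; this is possible when $\dim\got v_i$ is even. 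The subtle point is an odd-dimensional $\got v_i$ on which $\got c$ acts trivially. I would deal with it exactly as Lauret does, or observe that the statement only needs $\lambda_i(Z)J_i$, so any $J_i$ suffices if one exists. If this is the hardest point, I would check Lauret's handling of it.

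For item~\eqref{item5teoimp1}, write $\got v_0=\R w_0\oplus\R w_1$ with $w_0,w_1$ lightlike and $\langle w_0,w_1\rangle=1$. Every $A\in\got{so}(\got v_0)$ has the form $aJ_0$, where $J_0w_0=w_0$ and $J_0w_1=-w_1$. Then $J_0$ is skew, since $\langle J_0w_0,w_1\rangle+\langle w_0,J_0w_1\rangle=1-1=0$, and $J_0^2=\Id$. Hence $\pi(Z)|_{\got v_0}=\lambda_0(Z)J_0$ for all $Z\in\got c$; in fact this holds for all $Z\in\got g$, because $\got g'$ acts trivially on $\got v_0\subset\got p$ (and, when $\got g'=0$, trivially so). The main obstacle I expect is the reduction to the abelian case in the second case, specifically verifying the trivial-kernel hypothesis of Lemma~\ref{lema:veryimpor} on $\got p$, which I handle as above.
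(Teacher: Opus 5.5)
Your proposal is correct and follows the paper's proof essentially step for step. The paper likewise treats the abelian case directly via Lemma~\ref{lema:veryimpor}; otherwise it restricts $\pi(\got c)$ to $\got p$, checks the trivial common kernel, applies Lemma~\ref{lema:veryimpor} there and uses complete reducibility on $\got p^{\perp}$, then defers item (2) to Lauret and gets item (3) from $\dim\got{so}(1,1)=1$; your $J_0$ is the same map as the paper's swap of an orthonormal basis. The odd-dimensional case you flag in item (2) is a genuine defect of the statement, not of your argument: for example, $\got g=\got{so}(3)\oplus\R$ acting on $\R^3\oplus\R^{1,1}$ is a Lorentzian data set with a summand $\R^3$ carrying no skew $J_1$ with $J_1^2=-\Id$. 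So item (2) should be read as asserting $J_i$ only when $\pi(\got c)|_{\got v_i}\neq 0$, which is the only case used in Theorem~\ref{teo:16}.
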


\begin{proof}
    We first prove item~\eqref{item1teoimp1}. Since $(\got g, \got v, \pi)$ is a Lorentzian data set with $\got v$ Lorentzian, $\got g$ is a Riemannian Lie algebra. If $\got g$ is abelian, the result follows directly from Lemma~\ref{lema:veryimpor}. Therefore, in what follows we assume that $\got g$ is not abelian; in particular, $\got g' \neq \{0\}$.

    As in Lemma~\ref{lemma:tec_lemma_imp1}, define
    \begin{equation*}
        \got p = \bigcap_{Z \in \got g'} \ker \pi(Z),
    \end{equation*}
    and consider the orthogonal decomposition
    \begin{equation*}
        \got v = \got p \oplus \got p^{\perp},
    \end{equation*}
    where $\got p^{\perp}$ is a Riemannian subspace of $\got v$. Let $k = \dim \got p$.

    Consider the (possibly non-faithful) representation $\mu: \got c \to \got{so}(\got p)$ defined as the restriction of $\pi$ to the domain $\got c$ and the codomain $\got p$, i.e.,
    \begin{equation*}
        \mu(Z) X = \pi(Z) X, \quad \text{for } Z \in \got c, \, X \in \got p.
    \end{equation*}
    This map is well-defined since $\got p$ is $\pi(\got g)$-invariant.

    Let $\got a := \mu(\got c)$, an abelian subalgebra of $\got{so}(\got p) \simeq \got{so}(1, k-1)$. We claim that
    \begin{equation*}
        \bigcap_{x \in \got a} \ker x = \bigcap_{Z \in \got c} \ker \mu(Z) = \{0\}.
    \end{equation*}
    Indeed, let $X \in \bigcap_{Z \in \got c} \ker \mu(Z)$. Then $\pi(Z) X = 0$ for all $Z \in \got c$. Since $X \in \got p$, also $\pi(Z) X = 0$ for all $Z \in \got g'$. Hence
    \begin{equation*}
        X \in \bigcap_{Z \in \got g} \ker \pi(Z) = \{0\},
    \end{equation*}
    so $X = 0$. This proves the claim.

    By Lemma~\ref{lema:veryimpor}, applied to the abelian algebra $\got a$, the space $\got p$ decomposes as an orthogonal direct sum
    \begin{equation*}
        \got p = \got v_0 \oplus \got v_1 \oplus \cdots \oplus \got v_r
    \end{equation*}
    of $\got a$-invariant subspaces, where $\got v_i$ is Riemannian and irreducible for $1 \leq i \leq r$, and $\got v_0$ is a Lorentzian subspace of dimension $2$. Moreover, $\got v_0$ splits as the direct sum of two $\pi(\got g)$-invariant, irreducible, one-dimensional subspaces generated by lightlike vectors.

    To show that $\got v_0, \ldots, \got v_r$ are $\pi(\got g)$-invariant, it suffices to verify $\pi(\got g')$-invariance. For $Z \in \got g'$ and $X \in \got v_i \subset \got p$,
    \begin{equation*}
        \pi(Z) X = 0 \in \got v_i.
    \end{equation*}

    Since $\got p^{\perp}$ is Riemannian, we may consider the restriction $\mu_2: \got g \to \got{so}(\got p^{\perp})$, and decompose $\got p^{\perp}$ as an orthogonal direct sum
    \begin{equation*}
        \got p^{\perp} = \got v_{r+1} \oplus \cdots \oplus \got v_k
    \end{equation*}
    of $\pi(\got g)$-invariant irreducible subspaces. This completes the proof of item~\eqref{item1teoimp1}.

    The proof of item~\eqref{item4ateoimp1} follows exactly as in the Riemannian case (see~\cite[Lemma~3.11]{Lauret1999}). Finally, we prove item~\eqref{item5teoimp1}. Since $\got v_0$ has dimension $2$ and is Lorentzian, $\got{so}(\got v_0) \simeq \got{so}(1,1)$. Let $J_0$ denote the generator of $\got{so}(\got v_0)$ that interchanges the elements of an orthonormal basis of $\got v_0$. Then $J_0^2 = \Id$, and $J_0$ generates $\got{so}(\got v_0)$. Therefore, for every $Z \in \got c$ there exists $\lambda_0(Z) \in \R$ such that
    \begin{equation*}
        \pi(Z)|_{\got v_0} = \lambda_0(Z)\, J_0.
    \end{equation*}
    This concludes the proof.
\end{proof}

We now show that the kernel of any map $\pi(Z)$ for $Z \in \got g$ can be decomposed in accordance with the decompositions of $\got v$ given by Theorems~\ref{lema:tec} and~\ref{teo:imp1}, and as a consequence that $\ker \pi(Z)$ is always a non-degenerate subspace of $\got v$.

\begin{proposition}\label{prop:ker_nodeg}
    Let $(\got g, \got v, \pi)$ be a Lorentzian data set with $\got v$ Lorentzian. Decompose
    \begin{equation*}
        \got v = \got v_0 \oplus \got v_1 \oplus \cdots \oplus \got v_k
    \end{equation*}
    into $\pi(\got g)$-invariant irreducible subspaces, with $\got v_0 = \{0\}$ if $\got v$ is Riemannian, or $\got v_0$ a $2$-dimensional Lorentzian subspace of $\got v$ if $\got v$ is Lorentzian. Fix $Z \in \got g$, and set $\got b_i = (\ker \pi(Z)) \cap \got v_i$ and $\got w_i$ the orthogonal complement of $\got b_i$ in $\got v_i$. Then
    \begin{equation*}
        \ker \pi(Z) = \got b_0 \oplus \got b_1 \oplus \cdots \oplus \got b_k,
    \end{equation*}
    with $\got b_0 = \{0\}$ or $\got b_0 = \got v_0$. In particular, $\ker \pi(Z)$ is non-degenerate, it is Lorentzian if and only if $\got v_0 \subset \ker \pi(Z)$, and
    \begin{equation*}
        (\ker \pi(Z))^{\perp} = \got w_0 \oplus \got w_1 \oplus \cdots \oplus \got w_k.
    \end{equation*}
\end{proposition}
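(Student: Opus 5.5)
The plan is to exploit the $\pi(\got g)$-invariance of the summands $\got v_i$ to split $\ker \pi(Z)$ componentwise. Then the structure of $\got v_0$ forces $\got b_0$ to be all or nothing, and Riemannian-ness of the other $\got v_i$ gives non-degeneracy.

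\emph{Step 1: componentwise splitting.} Write $X \in \got v$ as $X = X_0 + X_1 + \cdots + X_k$ with $X_i \in \got v_i$. Since each $\got v_i$ is $\pi(\got g)$-invariant, $\pi(Z)X_i \in \got v_i$. Because the sum is direct, $\pi(Z)X = 0$ if and only if $\pi(Z)X_i = 0$ for every $i$. Hence $\ker \pi(Z) = \bigoplus_i \got b_i$, and this sum is orthogonal because the $\got v_i$ are mutually orthogonal.

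\emph{Step 2: the Lorentzian block.} This is the step where the Lorentzian geometry actually enters. By Theorem~\ref{teo:imp1}, $\got v_0$ is $2$-dimensional Lorentzian, so $\pi(Z)|_{\got v_0} \in \got{so}(\got v_0) \simeq \got{so}(1,1)$, which is one-dimensional and spanned by $J_0$. Since $J_0^2 = \Id$, $J_0$ is invertible. Writing $\pi(Z)|_{\got v_0} = \mu J_0$, there are only two cases:
\begin{itemize}
\item if $\mu = 0$, then $\got b_0 = \got v_0$;
\item if $\mu \neq 0$, then $\pi(Z)|_{\got v_0}$ is injective and $\got b_0 = \{0\}$.
\end{itemize}
This uses only that $\got v_0$ is Lorentzian of dimension $2$ and invariant, so it applies to every $Z \in \got g$, not just central ones. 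The point to get right is that a skew map on $\R^{1,1}$ cannot have a one-dimensional (lightlike) kernel. That is exactly what rules out a degenerate $\got b_0$.

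\emph{Step 3: non-degeneracy.} For $i \geq 1$, $\got v_i$ is Riemannian, so $\got b_i$ is Riemannian (possibly zero), hence non-degenerate, and $\got v_i = \got b_i \oplus \got w_i$. Likewise $\got b_0 \in \{0, \got v_0\}$ is non-degenerate, with $\got w_0 = \got v_0$ or $\got w_0 = \{0\}$ respectively. An orthogonal sum of non-degenerate subspaces is non-degenerate, so $\ker \pi(Z)$ is non-degenerate.

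Its signature is the sum of the signatures of the $\got b_i$. All $\got b_i$ with $i \geq 1$ are positive definite, so $\ker\pi(Z)$ is Lorentzian exactly when $\got b_0 = \got v_0$, i.e.\ when $\got v_0 \subset \ker \pi(Z)$. In the Riemannian case $\got v_0 = \{0\}$, so $\ker\pi(Z)$ is Riemannian.

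Finally, $\got w_0 \oplus \cdots \oplus \got w_k$ is orthogonal to every $\got b_j$: for $i \neq j$ this holds because $\got v_i \perp \got v_j$, and for $i = j$ by the definition of $\got w_i$. Its dimension is $\dim \got v - \dim \ker \pi(Z)$. Non-degeneracy then gives $(\ker \pi(Z))^{\perp} = \got w_0 \oplus \cdots \oplus \got w_k$.
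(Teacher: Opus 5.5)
Your proposal is correct and follows essentially the same route as the paper. You split $\ker\pi(Z)$ componentwise using the $\pi(\got g)$-invariance of the $\got v_i$, and you use that $\pi(Z)|_{\got v_0}\in\got{so}(\got v_0)\simeq\got{so}(1,1)$ is either zero or invertible. Beyond that, you only spell out the non-degeneracy, signature count and orthogonal-complement dimension argument that the paper treats as immediate.
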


\begin{proof}
    Let $X \in \ker \pi(Z)$ and decompose $X = X_0 + X_1 + \cdots + X_k$ with $X_i \in \got v_i$. Then $\pi(Z)(X) = 0$ if and only if $\sum \pi(Z)(X_i) = 0$; since the $\got v_i$ are $\pi(Z)$-invariant subspaces of $\got v$, we conclude $\pi(Z)(X_i) = 0$ for each $i = 0, \ldots, k$. So $X \in \got b_0 \oplus \got b_1 \oplus \cdots \oplus \got b_k$. The reverse inclusion is immediate.

    If $\got v$ is Riemannian, the proof is complete. Suppose $\got v$ is Lorentzian, so $\got v_0 \neq \{0\}$. Note that $\got b_i = \ker(\pi(Z)|_{\got v_i})$. Since $\pi(\got g)$ acts (perhaps non-faithfully) on $\got v_0$ as $\got{so}(1,1)$, either $\ker(\pi(Z)|_{\got v_0}) = \{0\}$ or $\ker(\pi(Z)|_{\got v_0}) = \got v_0$. In any case, $\ker \pi(Z)$ is non-degenerate. The last assertion is immediate.
\end{proof}

\begin{observation}\label{rem:lema_ker}
    The subspaces $\got b_i$ and $\got w_i$ in the decomposition of $\ker \pi(Z)$ and $(\ker \pi(Z))^{\perp}$ given by Proposition~\ref{prop:ker_nodeg} are not necessarily $\pi(\got g)$-invariant. However, if $Z \in \got c$ (the center of $(\got g, [\cdot,\cdot]_{\got g})$), then for each $Z' \in \got g$, $\pi(Z')$ commutes with $\pi(Z)$ and so $\pi(Z')$ leaves $\ker \pi(Z)$ invariant. As a consequence, $\got b_i = \got v_i \cap (\ker \pi(Z))$ is a $\pi(\got g)$-invariant subspace of $\got v_i$. Since each $\got v_i$ ($1 \leq i \leq k$) is irreducible under the action of $\pi(\got g)$, either $\got b_i = \{0\}$ (and hence $\got w_i = \got v_i$) or $\got b_i = \got v_i$ (and hence $\got w_i = \{0\}$).
\end{observation}

\begin{observation}
    The conclusions of Theorem~\ref{teo:imp1} do not hold for degenerate Lorentzian data sets $(\got g, \got v, \pi)$ with $\got g$ abelian, as shown by the data set in Example~\ref{example:1}. In that example, there are no non-trivial proper $\pi(\got g)$-invariant subspaces. Moreover, up to scalar multiples, $F$ is the only common eigenvector of the family $\{\pi(Z) : Z \in \got g\}$.
\end{observation}

\section{The isotropy algebra \texorpdfstring{$\got h^{\operatorname{aut}}$}{haut}}\label{sec:7_The_isotropy_algebra_h_aut}

Let $(\got g, \got v, \pi)$ be a Lorentzian data set and let $N = N(\got g, \got v, \pi)$ be the simply connected $2$-step nilpotent Lorentzian Lie group with Lie algebra
\begin{equation*}
    \got n = \got n(\got g, \got v, \pi) = \got g \oplus \got v.
\end{equation*}
The Lie algebra of $\Iso^{\aut}(N) = N \rtimes H^{\aut}$ is
\begin{equation*}
    \got{iso}^{\aut}(N) = \got n \rtimes \got h^{\aut}.
\end{equation*}
Therefore, to obtain $\Iso^{\aut}(N)$, it suffices to compute $H^{\aut}$. It was proved in~\cite{DelBarcoOvando2014, Ovando2013} that
\begin{equation}\label{eq:H_aut}
    H^{\aut} = \{ (\phi, T) \in \OO(\got g, \langle \cdot, \cdot \rangle_{\got g}) \times \OO(\got v, \langle \cdot, \cdot \rangle_{\got v}) : \pi(\phi Z) = T \pi(Z) T^{-1} \text{ for every } Z \in \got g \},
\end{equation}
and that its Lie algebra is
\begin{equation}\label{eq:H_aut_lie}
    \got h^{\aut} = \{ (A, B) \in \got{so}(\got g, \langle \cdot, \cdot \rangle_{\got g}) \times \got{so}(\got v, \langle \cdot, \cdot \rangle_{\got v}) : [B, \pi(Z)] = \pi(A Z) \text{ for every } Z \in \got g \}.
\end{equation}

For a degenerate Lorentzian data set, the computation is identical and yields the same descriptions of $H^{\aut}$ and $\got h^{\aut}$. Therefore,~\eqref{eq:H_aut} and~\eqref{eq:H_aut_lie} remain valid for degenerate data sets.

In this section we give a simpler description of $\got h^{\aut}$ analogous to that of the Riemannian case proved in~\cite[Theorem~3.12]{Lauret1999}. In~\cite{Ovando2013}, such a description was given for pseudo-Riemannian spaces under the assumption that $\got g$ is semisimple (cf.~the discussion after~\cite[Proposition~3.5]{Ovando2013}); however, as observed in Corollary~\ref{coro:corolario_2}, this is never the case when $\got n = \got n(\got g, \got v, \pi)$ with $(\got g, \got v, \pi)$ a Lorentzian data set.

\begin{proposition}\label{prop:h_aut_general}
    Let $(\got g, \got v, \pi)$ be a Lorentzian data set, or a degenerate Lorentzian data set with $\got g$ non-abelian. Decompose $\got g = \got g' \oplus \got c$, where $\got g' = [\got g, \got g]$ is compact and semisimple, and $\got c$ is the center of $\got g$. Then
    \begin{equation*}
        \got h^{\aut} = \got g' \oplus \got u, \qquad [\got g', \got u] = 0,
    \end{equation*}
    where
    \begin{equation*}
        \begin{split}
            \got u &= \{ (A, B) \in \got h^{\aut} : A|_{\got g'} = 0 \} \\
                   &\simeq \{ (A, B) \in \got{so}(\got c) \times \got{so}(\got v) : \pi(A Z) = [B, \pi(Z)] \text{ for all } Z \in \got c \text{ and } [B, \pi(Z)] = 0 \text{ for all } Z \in \got g' \},
        \end{split}
    \end{equation*}
    and $\got g'$ acts on $\got n = \got n(\got g, \got v, \pi) = \got g \oplus \got v$ as $(\ad(Z), \pi(Z))$ for every $Z \in \got g'$.
\end{proposition}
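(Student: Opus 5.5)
Starting from the description~\eqref{eq:H_aut_lie} of $\got h^{\aut}$, the plan is to show that every $(A,B)\in\got h^{\aut}$ splits uniquely as $(\ad(Z_0),\pi(Z_0)) + (A',B')$ with $Z_0\in\got g'$ and $(A',B')\in\got u$. First, $\got g'$ embeds in $\got h^{\aut}$ via $Z\mapsto(\ad(Z),\pi(Z))$. Since the metric on $\got g$ is $\ad$-invariant, $\ad(Z)\in\got{so}(\got g)$. Since $\pi$ is a representation, $[\pi(Z),\pi(W)]=\pi(\ad(Z)W)$. The map is injective because $\pi$ is faithful. Elements of $\got u$ commute with this copy of $\got g'$: $A'$ vanishes on $\got g'$ and preserves $\got c$, so $[\ad Z,A']=0$, and $[B',\pi(Z)]=\pi(A'Z)=0$ for $Z\in\got g'$. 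The intersection of $\got g'$ and $\got u$ is trivial, since $\ad(Z)|_{\got g'}=0$ forces $Z$ central in $\got g'$, hence $Z=0$ because $\got g'$ is semisimple.

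Next I need to see how any $(A,B)\in\got h^{\aut}$ acts on $\got g$. The key observation is that $A$ is a derivation of $\got g$. Applying $\pi$ to $A[Z,W]$ and using faithfulness, $\pi(A[Z,W])=[B,[\pi Z,\pi W]]=[\pi(AZ),\pi W]+[\pi Z,\pi(AW)]$, so $A[Z,W]=[AZ,W]+[Z,AW]$. Consequently $A$ preserves $\got g'=[\got g,\got g]$. It also preserves $\got c$: it maps the center into the center since $[AZ,W]=A[Z,W]-[Z,AW]=-[Z,AW]=0$ for $Z\in\got c$. Alternatively, skew-symmetry together with $\got c=(\got g')^{\perp}$ gives the same conclusion; orthogonality of the center and the derived algebra holds for any $\ad$-invariant metric. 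Now $A|_{\got g'}$ is a derivation of the compact semisimple $\got g'$, hence inner: $A|_{\got g'}=\ad(Z_0)$ for a unique $Z_0\in\got g'$. Set $(A',B')=(A-\ad Z_0, B-\pi(Z_0))$. This lies in $\got h^{\aut}$ and satisfies $A'|_{\got g'}=0$, so it lies in $\got u$. This gives $\got h^{\aut}=\got g'\oplus\got u$.

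It remains to identify $\got u$ with the stated set. For $(A,B)\in\got u$, $A$ preserves $\got c$ and kills $\got g'$, so it is determined by $A|_{\got c}\in\got{so}(\got c)$. The condition in~\eqref{eq:H_aut_lie} splits over $\got g=\got g'\oplus\got c$ into $\pi(AZ)=[B,\pi(Z)]$ for $Z\in\got c$ and $[B,\pi(Z)]=0$ for $Z\in\got g'$. Conversely, any such pair, with $A$ extended by zero on $\got g'$, is skew, because $\got g'\perp\got c$. The action statement for $\got g'$ follows from the bracket~\eqref{eq:corcheteiso} on $\got n$.

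I do not expect a real obstacle. The only point needing care is that $\got c\perp\got g'$ and that $A$ is a derivation; both follow from faithfulness and $\ad$-invariance. Non-abelianness ensures $\got g'\neq0$, but the argument also works trivially when $\got g'=0$.
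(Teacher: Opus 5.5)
Your proposal is correct and follows the same route as the paper: show that $A$ is a derivation of $\got g$, write $A|_{\got g'}=\ad(Z_0)|_{\got g'}$ using that derivations of the semisimple $\got g'$ are inner, and subtract $(\ad(Z_0),\pi(Z_0))\in\got h^{\aut}$. You also spell out three points the paper leaves implicit or cites from Lauret: the derivation property, which you obtain from faithfulness of $\pi$ and the Jacobi identity; the relation $[\got g',\got u]=0$; and the explicit description of $\got u$ using $\got c=(\got g')^{\perp}$.
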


\begin{proof}
    We reserve the notation $[\cdot, \cdot]$ for the usual Lie bracket in $\got h^{\aut} \subset \End(\got n)$, and denote by $[\cdot, \cdot]_{\got g}$ the Lie bracket in $\got g$ and by $[\cdot, \cdot]_{\got n}$ the Lie bracket in $\got n$ defined by~\eqref{eq:bracketNR}.

    Recall that $\got h^{\aut}$ is the Lie algebra of skew-symmetric derivations of $(\got n, [\cdot, \cdot]_{\got n})$ (cf.~\eqref{eq:haut_lie}), and that $\got g$ is the center of $(\got n, [\cdot, \cdot]_{\got n})$ in the Lorentzian data set case and $\got g = \got n'$ in the degenerate case. In both cases, if $D \in \got h^{\aut}$, then $D$ preserves $\got g$ and its orthogonal complement $\got v$.

    Suppose that $D = (A, B)$, with $A \in \got{so}(\got g, \langle \cdot, \cdot \rangle_{\got g})$ and $B \in \got{so}(\got v, \langle \cdot, \cdot \rangle_{\got v})$ (cf.~\eqref{eq:H_aut_lie}). With the same argument as in the proof of~\cite[Theorem~3.12]{Lauret1999}, one shows that $A$ is a derivation of $(\got g, [\cdot, \cdot]_{\got g})$ (cf.~also~\cite[Proposition~3.5]{Ovando2013}). Therefore, the commutator $\got g'$ and the center $\got c$ of $\got g$ are $A$-invariant subspaces, and since $\got g'$ is semisimple, there exists $Z_0 \in \got g'$ such that
    \begin{equation*}
        A|_{\got g'} = \ad(Z_0)|_{\got g'}.
    \end{equation*}

    On the other hand, by Proposition~\ref{prop:geodesic_graph_dataset}, $(\ad(Z_0), \pi(Z_0))$ is a skew-symmetric derivation of $\got n$, i.e., $(\ad(Z_0), \pi(Z_0)) \in \got h^{\aut}$. Hence
    \begin{equation*}
        (A', B') = (A - \ad(Z_0), B - \pi(Z_0))
    \end{equation*}
    is an element of $\got h^{\aut}$ that satisfies $A'|_{\got g'} = 0$ and $A' \got c \subset \got c$. Hence
    \begin{equation*}
        D = (A', B') + (\ad(Z_0), \pi(Z_0)).
    \end{equation*}

    Let $\got u = \{ (A, B) \in \got h^{\aut} : A|_{\got g'} = 0 \}$. Since $\got u \cap \got g' = \{0\}$, $\got h^{\aut} = \got u \oplus \got g'$.
\end{proof}

Given a Lorentzian or degenerate Lorentzian data set $(\got g, \got v, \pi)$, we denote by $\End_{\pi}(\got v)$ the space of $\pi$-intertwining endomorphisms of $\got v$, i.e., the endomorphisms $B \in \End(\got v)$ such that
\begin{equation*}
    \pi(Z) B(X) = B(\pi(Z) X)
\end{equation*}
for all $Z \in \got g$ and $X \in \got v$, or equivalently,
\begin{equation*}
    [B, \pi(Z)] = 0 \quad \text{for all } Z \in \got g.
\end{equation*}

\begin{theorem}\label{teo:16}
    Let $(\got g, \got v, \pi)$ be a Lorentzian data set. Decompose $\got g = \got g' \oplus \got c$, where $\got g' = [\got g, \got g]$ is compact and semisimple and $\got c$ is the center of $\got g$, and write $\got h^{\aut} = \got u \oplus \got g'$ as in Proposition~\ref{prop:h_aut_general}. Then
    \begin{equation*}
        \got u = \End_{\pi}(\got v) \cap \got{so}(\got v) = \{ B \in \got{so}(\got v) : [B, \pi(Z)] = 0 \text{ for every } Z \in \got g \}.
    \end{equation*}
\end{theorem}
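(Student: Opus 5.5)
The plan is to show that for every $(A,B)\in\got u$ the component $A$ vanishes on $\got c$. Once $A|_{\got c}=0$ is known, we have $A=0$, because $A|_{\got g'}=0$ by definition of $\got u$. The defining conditions of $\got u$ in Proposition~\ref{prop:h_aut_general} then read $[B,\pi(Z)]=0$ for all $Z\in\got c$ and for all $Z\in\got g'$, that is, $B\in\End_\pi(\got v)\cap\got{so}(\got v)$. Conversely, for any such $B$ the pair $(0,B)$ satisfies~\eqref{eq:H_aut_lie} and has zero first component on $\got g'$, so it lies in $\got u$. This gives the identification $(0,B)\leftrightarrow B$.

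To prove $A|_{\got c}=0$, I would exponentiate. Since $(A,B)\in\got h^{\aut}$, the one-parameter group $(e^{tA},e^{tB})$ lies in $H^{\aut}$. By~\eqref{eq:H_aut}, for every $Z\in\got c$ and $t\in\R$,
\begin{equation*}
\pi(e^{tA}Z)=e^{tB}\pi(Z)e^{-tB},
\end{equation*}
and $e^{tA}Z\in\got c$ because $A$ preserves $\got c$. Hence the characteristic polynomial of $\pi(e^{tA}Z)$ does not depend on $t$.

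Next I would compute this polynomial for $Y\in\got c$ using the decompositions of $\got v$. If $\got v$ is Riemannian I use Theorem~\ref{lema:tec}; if $\got v$ is Lorentzian I use Theorem~\ref{teo:imp1}. In either case $\pi(Y)|_{\got v_i}=\lambda_i(Y)J_i$ with $J_i^2=-\Id$ for $i\ge1$, and $\pi(Y)|_{\got v_0}=\lambda_0(Y)J_0$ with $J_0^2=\Id$ in the Lorentzian case. Each $\lambda_i$ is a linear functional on $\got c$. The characteristic polynomial is therefore
\begin{equation*}
(x^2-\lambda_0(Y)^2)\prod_{i\ge1}(x^2+\lambda_i(Y)^2)^{\dim\got v_i/2},
\end{equation*}
and it determines the multiset $\{\lambda_0(Y)^2\}\cup\{\lambda_i(Y)^2\}$, with the $\lambda_0$-term recognizable as the one with real roots. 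Along the curve $Y=e^{tA}Z$ this multiset is constant. Each $f_i(t)=\lambda_i(e^{tA}Z)^2$ is real-analytic and takes values in a fixed finite set, so each $f_i$ is constant. Differentiating at $t=0$ gives $\lambda_i(AZ)\lambda_i(Z)=0$ for all $Z\in\got c$. Thus the product of the linear forms $\lambda_i\circ A$ and $\lambda_i$ vanishes identically as a polynomial, so one of them is zero. In either case $\lambda_i(AZ)=0$ for all $Z\in\got c$.

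It follows that $\pi(AZ)$ vanishes on every $\got v_i$, so $\pi(AZ)=0$, and faithfulness of $\pi$ gives $AZ=0$. The step I expect to need the most care is the passage from invariance of the spectrum to invariance of each individual $\lambda_i^2$. The analyticity argument handles collisions between different $\lambda_i$, and in the Lorentzian case the real-versus-imaginary nature of the roots keeps $\lambda_0$ separate from the $\lambda_i$ with $i\ge1$. I avoided a trace-form argument because in the Lorentzian case $\operatorname{tr}\pi(Y)^2$ is indefinite on $\pi(\got c)$.
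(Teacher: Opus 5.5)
Your proof is correct, and it takes a genuinely different route from the paper's.

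\textbf{The paper's route.} It stays at the Lie algebra level and argues for each fixed $Z\in\got c$. It splits $\got v=\ker\pi(Z)\oplus(\ker\pi(Z))^{\perp}$. This needs Proposition~\ref{prop:ker_nodeg} for the non-degeneracy of $\ker\pi(Z)$; in the Lorentzian case that uses that $\got v_0$ lies entirely inside or entirely outside the kernel. It also needs Remark~\ref{rem:lema_ker}. On the kernel, it kills $\pi(AZ)$ by a skew-symmetry computation. On each block $\got v_i$ where $K_i=\pi(Z)|_{\got v_i}$ is invertible, it projects $[B,\pi(Z)]=\pi(AZ)$ to get $[B_i,K_i]=\alpha_i K_i$. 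Then $K_i^{-1}B_iK_i-B_i=\alpha_i\Id$ is skew-symmetric, which forces $\alpha_i=0$.

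\textbf{Your route.} You pass to the one-parameter group $(e^{tA},e^{tB})\subset H^{\aut}$, or equivalently check that $e^{-tB}\pi(e^{tA}Z)e^{tB}$ has zero derivative. You use conjugation-invariance of the spectrum to freeze each $\lambda_i(e^{tA}Z)^2$, and differentiate to get $\lambda_i(AZ)\lambda_i(Z)=0$ on all of $\got c$.

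\textbf{Comparison.} Where $\lambda_i(Z)\neq 0$, your identity is exactly the paper's $\alpha_i=0$. The difference is how the zeros of $\lambda_i$ are handled. Your last step says that a product of two linear forms vanishing identically on $\got c$ forces one factor to vanish. This handles those zeros globally. So you never need the kernel decomposition or the non-degeneracy of $\ker\pi(Z)$, and you treat $\got v_0$ and the Riemannian blocks uniformly. The cost is going through the group, or a small ODE argument, instead of a purely algebraic identity.

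\textbf{Two minor simplifications.} Continuity alone, rather than analyticity, already makes each $f_i$ constant. You also do not need to separate real roots from imaginary ones. Every $\lambda_j(e^{tA}Z)^2$ is the squared modulus of an eigenvalue of a matrix conjugate to $\pi(Z)$, so it lies in a fixed finite set.
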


\begin{proof}
    We use $[\cdot, \cdot]$ for the usual Lie bracket in $\got h^{\aut} \subset \End(\got n)$, $[\cdot, \cdot]_{\got g}$ for the Lie bracket in $\got g$, and $[\cdot, \cdot]_{\got n}$ for the Lie bracket in $\got n$ defined by~\eqref{eq:bracketNR}.

    Let $D \in \got h^{\aut}$. By Proposition~\ref{prop:h_aut_general},
    \begin{equation*}
        D = (A, B) + (\ad(Z_0), \pi(Z_0)),
    \end{equation*}
    where $(A, B) \in \got u$ and $Z_0 \in \got g'$. We show that $A|_{\got c} = 0$, which implies $A = 0$.

    Let $0 \neq Z \in \got c$. By Proposition~\ref{prop:ker_nodeg}, $\ker \pi(Z)$ is a non-degenerate subspace of $\got v$. So $\got v$ decomposes orthogonally as
    \begin{equation*}
        \got v = \ker \pi(Z) \oplus (\ker \pi(Z))^{\perp}.
    \end{equation*}
    We first show that $\pi(A Z)|_{\ker \pi(Z)} = 0$. From~\eqref{eq:H_aut_lie},
    \begin{equation}\label{eq:CasoRieman}
        B \circ \pi(Z) - \pi(Z) \circ B = \pi(A Z).
    \end{equation}
    For $X, Y \in \ker \pi(Z)$,
    \begin{equation*}
        \langle \pi(A Z) X, Y \rangle_{\got v} = \langle B(\pi(Z) X) - \pi(Z)(B X), Y \rangle_{\got v} = -\langle \pi(Z)(B X), Y \rangle_{\got v} = \langle B X, \pi(Z) Y \rangle_{\got v} = 0.
    \end{equation*}
    Since $\ker \pi(Z)$ is non-degenerate, this implies that $\pi(A Z) \equiv 0$ on $\ker \pi(Z)$.

    Let us now show that $\pi(A Z)|_{(\ker \pi(Z))^{\perp}} = 0$. Consider the orthogonal decomposition of $\got v$ into $\pi(\got g)$-invariant subspaces given by Theorem~\ref{lema:tec} if $\got v$ is Riemannian, or Theorem~\ref{teo:imp1} if $\got v$ is Lorentzian:
    \begin{equation*}
        \got v = \got v_0 \oplus \got v_1 \oplus \cdots \oplus \got v_k,
    \end{equation*}
    where $\got v_0 = \{0\}$ if $\got v$ is Riemannian, and $\got v_0$ is Lorentzian of dimension $2$ if $\got v$ is Lorentzian.

    Let $I = \{ i \in \{0, \ldots, k\} : (\ker \pi(Z)) \cap \got v_i = \{0\} \}$. By Remark~\ref{rem:lema_ker},
    \begin{equation*}
        (\ker \pi(Z))^{\perp} = \bigoplus_{i \in I} \got v_i.
    \end{equation*}

    Fix $i \in I$. By Theorems~\ref{lema:tec} and~\ref{teo:imp1}, there exists a non-singular endomorphism $J_i \in \got{so}(\got v_i)$ and a function $\lambda_i: \got c \to \R$ such that $\pi|_{\got c} = \lambda_i J_i$ on $\got v_i$ (here $J_i^{-1} = -J_i$ if $i \neq 0$, and $J_0^{-1} = J_0$). Since $\pi(Z)|_{\got v_i} \neq 0$, $\lambda_i(Z) \neq 0$. Let $K_i = \lambda_i(Z) J_i$ and $\alpha_i = \lambda_i(A Z) / \lambda_i(Z)$. Then $K_i \in \got{so}(\got v_i)$ is a non-singular endomorphism of $\got v_i$ satisfying
    \begin{equation*}
        \pi(Z)|_{\got v_i} = K_i \quad \text{and} \quad \pi(A Z)|_{\got v_i} = \alpha_i K_i.
    \end{equation*}
    Define $B_i = p_i \circ B|_{\got v_i}: \got v_i \to \got v_i$, where $p_i$ denotes the orthogonal projection of $\got v$ onto $\got v_i$. Then $B_i \in \got{so}(\got v_i)$, and from~\eqref{eq:CasoRieman}, $B_i K_i - K_i B_i = \alpha_i K_i$. Hence,
    \begin{equation*}
        K_i^{-1} B_i K_i - B_i = \alpha_i \Id.
    \end{equation*}
    Since $B_i, K_i \in \got{so}(\got v_i)$, the left-hand side belongs to $\got{so}(\got v_i)$, so $\alpha_i = 0$. We obtain $\pi(A Z)|_{(\ker \pi(Z))^{\perp}} = 0$, as desired.

    Therefore $\pi(A Z) = 0$ for each $Z \in \got c$, and since $\pi$ is faithful, $A|_{\got c} = 0$, hence $A = 0$.
\end{proof}

\begin{corollary}\label{cor:H0}
    Let $(\got g, \got v, \pi)$ be a Lorentzian data set. Decompose $\got g = \got g' \oplus \got c$, where $\got g' = [\got g, \got g]$ is compact and semisimple and $\got c$ is the center of $\got g$. Then the identity component of $H^{\aut}$ is
    \begin{equation*}
        (H^{\aut})_0 = G \times U_0,
    \end{equation*}
    where $U= \End_{\pi}(\got v)\cap \OO(\got v,\langle \cdot, \cdot \rangle_{\got v}) $, $G = \overline{G} / \ker \pi$, and $\overline{G}$ is the simply connected Lie group with Lie algebra $\got g'$. The group $U$ acts trivially on $\got g$, and if we also denote by $\pi$ the corresponding representation of $G$ on $\got v$, then each $g \in G$ acts on $\got n = \got g \oplus \got v$ as $(\Ad(g), \pi(g))$.
\end{corollary}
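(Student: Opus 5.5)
The plan is to pass from the Lie algebra description of Theorem~\ref{teo:16} to connected groups. By Proposition~\ref{prop:h_aut_general} and Theorem~\ref{teo:16}, $\got h^{\aut} = \got g' \oplus \got u$ with $[\got g', \got u] = 0$ and $\got u = \End_{\pi}(\got v) \cap \got{so}(\got v)$. Here $\got g'$ is embedded as $Z \mapsto (\ad(Z), \pi(Z))$ and $\got u$ is embedded as $B \mapsto (0, B)$. Since $(H^{\aut})_0$ is generated by $\exp(\got h^{\aut})$, and the two summands commute, $(H^{\aut})_0 = G_1 \cdot U_1$. Here $G_1$ is the connected subgroup of $\OO(\got g)\times\OO(\got v)$ with Lie algebra $\got g'$, namely the one generated by $(e^{\ad Z}, e^{\pi(Z)})$, and $U_1$ is the connected subgroup with Lie algebra $\got u$. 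These two subgroups commute elementwise.

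First, identify $U_1$. The group $U = \End_{\pi}(\got v) \cap \OO(\got v)$ is a closed subgroup of $\OO(\got v)$. Its Lie algebra is $\{B \in \got{so}(\got v) : e^{tB}\pi(Z) = \pi(Z)e^{tB}\ \forall t\}$, and differentiating shows this equals $\got u$. Hence $U_1 = U_0$, embedded as $T \mapsto (\Id, T)$. Each $T \in U$ lies in $H^{\aut}$ by \eqref{eq:H_aut} with $\phi = \Id$, so $U$ acts trivially on $\got g$.

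Second, identify $G_1$. The map $\got g' \to \got h^{\aut}$, $Z \mapsto (\ad Z, \pi(Z))$, is an injective Lie algebra homomorphism, injective because $\pi$ is faithful. It integrates to a homomorphism $\overline{G} \to (H^{\aut})_0$, $g \mapsto (\Ad(g), \pi(g))$, whose image is $G_1$. Its kernel is $\ker\Ad \cap \ker \pi = \ker \pi$. Here we read $\ker\pi$ in the statement as the kernel of this combined map; note that $\ker\pi \subset Z(\overline G)$ lies in $\ker \Ad$ automatically, because a central element of a connected group acts trivially by $\Ad$. Therefore $G_1 \simeq \overline G/\ker\pi = G$, acting as $(\Ad(g), \pi(g))$.

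The main obstacle is showing that $G_1 \cdot U_0$ is a direct product, i.e.\ that $G_1 \cap U_0$ is trivial. An element of the intersection has the form $(\Ad(g), \pi(g)) = (\Id, T)$. Thus $\Ad(g) = \Id$ on $\got g'$, so $g$ is central in $\overline G$, and $\pi(g) = T$ commutes with $\pi(\got g)$. This alone does not force $\pi(g)=\Id$: a central element of a compact semisimple group can act as a nontrivial scalar-like element, for example $-\Id$ on a representation. To deal with this, I would first try to interpret $G \times U_0$ as the quotient of the external product by the finite central intersection. If a genuine direct product is needed, an alternative is to replace $U_0$ by the connected subgroup generated by the $\got u$-part complementary to $\pi(\got c')$; but $\got u \cap \pi(\got g') = 0$ already, since $\pi(\got g')$ has no center. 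So the intersection is a discrete central subgroup. The statement should then be read as a locally direct product, and this is the precise point I would check against the intended meaning before finalizing.
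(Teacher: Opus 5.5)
Your route is the paper's route. The paper also integrates $Z\mapsto(\ad Z,\pi(Z))$ to $\overline\varphi\colon\overline G\to H^{\aut}$, $g\mapsto(\Ad g,\tilde\pi(g))$, and sets $G=\overline\varphi(\overline G)$. It observes that $U=\End_{\pi}(\got v)\cap\OO(\got v)$ has Lie algebra $\got u$, and then simply asserts $(H^{\aut})_0=G\times U_0$ ``from Theorem~\ref{teo:16}''. Your check that $\ker\overline\varphi=\ker\Ad\cap\ker\pi=\ker\pi$ supplies a step the paper only states: a discrete normal subgroup of a connected group is central, hence lies in $\ker\Ad$.

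The point you flag is genuine, and the paper's proof does not address it. Theorem~\ref{teo:16} only gives the Lie algebra splitting $\got h^{\aut}=\varphi(\got g')\oplus\got u$. This yields $(H^{\aut})_0=G\cdot U_0$ with $G$ and $U_0$ commuting elementwise and $G\cap U_0$ finite and central, but not a direct product. You do not need to leave this as something to ``check'': the intersection can be nontrivial.

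Here is an example. Take $\got g=\got{sp}(1)\oplus\R c$ with $\got{sp}(1)=\operatorname{Im}\mathbb H$, equipped with minus the Killing form on $\got{sp}(1)$ and $\langle c,c\rangle=-1$. Take $\got v=\mathbb H$ with its Euclidean metric, and $\pi(X+tc)=L_X+tR_i$ (left multiplication by $X$, right multiplication by $i$). This is a Lorentzian data set. Here $\overline G=\mathrm{Sp}(1)$ acts faithfully, so $G\cong\mathrm{Sp}(1)$, and $U=\{R_{e^{i\theta}}\}\cong U(1)$ is connected. Then $\overline\varphi(-1)=(\Id,-\Id)=(\Id,R_{-1})\in G\cap U_0$. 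Consequently $(H^{\aut})_0=\{(\Ad q,L_qR_{e^{i\theta}})\}\cong(\mathrm{Sp}(1)\times U(1))/\mathbb Z_2\cong U(2)$. This is not isomorphic to $\mathrm{Sp}(1)\times U(1)$, since the centers differ.

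So the correct conclusion is exactly the one you reach: $(H^{\aut})_0=G\cdot U_0\cong(G\times U_0)/(G\cap U_0)$, a locally direct product. You should drop the sentence about replacing $U_0$ by a complement of $\pi(\got c')$. The sum is already direct at the Lie algebra level, so no choice made there can remove a group-level intersection.
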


Our proof follows ideas similar to those in~\cite[Theorem~3.12]{Lauret1999}, and we include it to keep the exposition self-contained.

\begin{proof}
    Let $\overline G$ be the simply connected Lie group whose Lie algebra is $\got g'$. Then $\overline G$ is compact and semisimple, and if $\tilde G = \overline G \times \R^n$, where $n = \dim \got c$, then $\tilde G$ is the simply connected Lie group whose Lie algebra is $\got g$.

    There exists a representation $\tilde \pi: \tilde G \to \OO(\got v)$ such that $d \tilde \pi_e = \pi$. Then for each $g \in \tilde G$ and each $Z \in \got g$,
    \begin{equation}\label{eq:Adpi}
        \pi(\Ad^{\tilde G}(g)(Z)) = \Ad^{\OO(\got v)}(\tilde \pi(g))(\pi(Z)) = \tilde \pi(g) \pi(Z) \tilde \pi(g)^{-1}.
    \end{equation}
    Since the metric $\langle \cdot, \cdot \rangle_{\got g}$ is $\ad$-invariant, $\Ad^{\tilde G}(g) \in \OO(\got g, \langle \cdot, \cdot \rangle_{\got g})$. Then from~\eqref{eq:H_aut} and~\eqref{eq:Adpi},
    \begin{equation*}
        (\Ad^{\tilde G}(g), \tilde \pi(g)) \in H^{\aut}
    \end{equation*}
    for each $g \in \overline G$.

    Hence we have a well-defined homomorphism
    \begin{equation*}
        \overline{\varphi}: \overline G \to H^{\aut}, \qquad g \mapsto (\Ad^{\tilde G}(g), \tilde \pi(g)).
    \end{equation*}
    Note that $d \overline \varphi_e = \varphi$, where $\varphi: \got g' \to \got h^{\aut}$ is the monomorphism defined by
    \begin{equation*}
        \varphi(Z) = (\ad(Z), \pi(Z)).
    \end{equation*}
    Thus $\ker(\overline \varphi)$ is a discrete subgroup of $\overline G$ (and hence finite, since $\overline G$ is compact). Then $G = \overline \varphi(\overline G)$ is a compact connected subgroup of $H^{\aut}$, isomorphic to $\overline G / \ker(\tilde \pi)$, whose Lie algebra is $\varphi(\got g') \simeq \got g'$.

    On the other hand, if $U= \operatorname{End}_{\pi}(\got v)\cap \OO(\got v,\langle \cdot, \cdot \rangle_{\got v}) $, then the Lie algebra of $U$ is $\got u=\End_{\pi}(\got v)\cap \got{so}(\got v)$. It then follows from Theorem~\ref{teo:16} that $H^{\aut}_0=G\times U_0$. 
\end{proof}

\bibliographystyle{amsalpha}
\bibliography{references}

\providecommand{\bysame}{\leavevmode\hbox to3em{\hrulefill}\thinspace}
\providecommand{\MR}{\relax\ifhmode\unskip\space\fi MR }
\providecommand{\MRhref}[2]{%
  \href{http://www.ams.org/mathscinet-getitem?mr=#1}{#2}
}
\providecommand{\href}[2]{#2}
\begin{thebibliography}{AFMV18}

\bibitem[AFMV18]{AutenriedFurutaniMarkinaVasiľev2018}
C.~Autenried, K.~Furutani, I.~Markina, and A.~Vasiľev, \emph{Pseudo-metric
  $2$-step nilpotent {Lie} algebras}, Adv. Geom. \textbf{18} (2018), no.~2,
  237--263.

\bibitem[CdBR24]{ContiDelBarcoRossi2024}
D.~Conti, V.~del Barco, and F.~Rossi, \emph{On uniqueness of ad-invariant
  metrics}, Tohoku Math. J. \textbf{76} (2024), no.~3, 317--359.

\bibitem[CNN23]{ChenNikolayevskyNikonorov2022}
Z.~Chen, Y.~Nikolayevsky, and Y.~Nikonorov, \emph{The moduli space of
  left-invariant metrics on six-dimensional characteristically solvable
  nilmanifolds}, Ann. Global Anal. Geom. \textbf{63} (2023), no.~1, 7.

\bibitem[CP09]{CorderoParker2009}
L.~A. Cordero and P.~E. Parker, \emph{Isometry groups of pseudoriemannian
  $2$-step nilpotent lie groups}, Houston J. Math. \textbf{35} (2009), 49--72.

\bibitem[CWZ22]{ChenWolfZhang2020}
Z.~Chen, J.~A. Wolf, and S.~Zhang, \emph{{On the geodesic orbit property for
  Lorentz manifolds}}, J. Geom. Anal. \textbf{32} (2022), no.~3, 1--14.

\bibitem[dBO14]{DelBarcoOvando2014}
V.~del Barco and G.~P. Ovando, \emph{{Isometric actions on pseudo-Riemannian
  nilmanifolds}}, Ann. Global Anal. Geom. \textbf{45} (2014), no.~2, 95--110.

\bibitem[dBOV14]{delBarcoOvandoVittone2014}
V.~del Barco, G.~P. Ovando, and F.~Vittone, \emph{On the isometry groups of
  invariant lorentzian metrics on the heisenberg group}, Mediterr. J. Math
  \textbf{11} (2014), 137–153.

\bibitem[DSO01]{DiScalaOlmos2001}
A.~J. Di~Scala and C.~Olmos, \emph{{The geometry of homogeneous submanifolds of
  hyperbolic space}}, Math. Z. \textbf{237} (2001), 199--209.

\bibitem[Ebe94]{Eberlein1994}
P.~Eberlein, \emph{{Geometry of $2$-step nilpotent groups with a left invariant
  metric}}, Ann. Sci. École Norm. Sup. \textbf{27} (1994), no.~5, 611--660.

\bibitem[Ebe96]{Eberlein1996}
\bysame, \emph{Geometry of nonpositively curved manifolds}, Chicago Lectures in
  Mathematics, Chicago Press, 1996.

\bibitem[Gor85]{Gordon1985}
C.~S. Gordon, \emph{Naturally reductive homogeneous {Riemannian} manifolds},
  Can. J. Math. \textbf{37} (1985), no.~3, 467--487.

\bibitem[Gue03]{Guediri2003}
M.~Guediri, \emph{Lorentz geometry of $2$-step nilpotent lie groups}, Geom.
  Dedicata \textbf{100} (2003), no.~1, 11--51.

\bibitem[JSC10]{JavaloyesSanchez2010}
M.~A. Javaloyes and M.~Sánchez~Caja, \emph{{An introduction to Lorentzian
  geometry and its applications}}, XVI Escola de Geometria Diferencial, IME,
  USP, 2010.

\bibitem[Kap81]{Kaplan1981}
A.~Kaplan, \emph{{Riemannian nilmanifolds attached to Clifford modules}}, Geom.
  Dedicata \textbf{11} (1981), no.~2, 127--136.

\bibitem[Lau98]{Lauret1998}
J.~Lauret, \emph{{Naturally reductive homogeneous structures on $2$-step
  nilpotent Lie groups}}, Rev. Unión Mat. Argent. \textbf{41} (1998), no.~2,
  15--24.

\bibitem[Lau99]{Lauret1999}
\bysame, \emph{{Homogeneous nilmanifolds attached to representations of compact
  Lie groups}}, Manuscripta Math. \textbf{99} (1999), 287--309.

\bibitem[NW23]{NikolayevskyWolf2022}
Y.~Nikolayevsky and J.~A. Wolf, \emph{{The structure of geodesic orbit Lorentz
  nilmanifolds}}, J. Geom. Anal. \textbf{33} (2023), no.~3, 1--12.

\bibitem[Ova13]{Ovando2013}
G.~P. Ovando, \emph{{Naturally reductive pseudo-Riemannian $2$-step nilpotent
  Lie groups}}, Houston J. Math. \textbf{39} (2013), no.~1, 147--167.

\bibitem[Wil82]{Wilson1982}
E.~N. Wilson, \emph{{Isometry groups on homogeneous nilmanifolds}}, Geom.
  Dedicata \textbf{12} (1982), no.~3, 337--346.

\bibitem[Wol62]{Wolf1962}
J.~A. Wolf, \emph{{On locally symmetric spaces of non-negative curvature and
  certain other locally homogeneous spaces}}, Comment. Math. Helv. \textbf{37}
  (1962), no.~1, 266--295.

\bibitem[Wu64]{Wu1964}
H.~Wu, \emph{{On the de Rham decomposition theorem}}, Illinois J. Math.
  \textbf{8} (1964), no.~4, 291--311.

\end{thebibliography}

\end{document}